\bfseries\color{brown},
\setlist[enumerate]{label=\alph*\upshape), nolistsep}
\newcommand\enumref[2]{\hyperref[#2]{\autoref*{#1}(\autoref*{#2})}}
\def\NewTheorem#1{%
  \newaliascnt{#1}{equation}%
  \newtheorem{#1}[#1]{#1}%
  \aliascntresetthe{#1}%
  \expandafter\def\csname #1autorefname\endcsname{#1}%
}
\let\gdom=\vartriangleright
\DeclareMathOperator\Shape{Shape}
\def\({\big(}
\def\){\big)}
\def\A{\mathbb{A}}
\def\Z{\mathbb{Z}}
\def\Q{\mathbb{Q}}
\def\N{\mathbb{N}}
\def\t{\mathfrak{t}}
\def\s{\mathfrak{s}}
\def\u{\mathfrak{u}}
\def\v{\mathfrak{v}}
\def\A{\mathcal{A}}
\def\O{\mathcal{O}}
\def\lam{\lambda}
\def\Sym{\mathfrak{S}}
\def\eps{\varepsilon}
\def\fm{\mathfrak{m}}
\def\fn{\mathfrak{n}}
\newcommand\HH{\mathscr{H}}
\def\P{\mathscr{P}}
\def\mff{\mathfrak{f}}
\def\mfg{\mathfrak{g}}
\def\dmn{\mathcal{D}_m(n)}
\let\Gdom=\blacktriangleright
\DeclareMathOperator\Gedom{\,{\underline{\kern-.1ex{\blacktriangleright}\kern-0.1ex}}\,}
\DeclareMathOperator\RS{RSK}
\DeclareMathOperator\Std{Std}
\numberwithin{equation}{section}
\newtheorem{prop}[equation]{Proposition}
\newtheorem{thm}[equation]{Theorem}
\newtheorem{cor}[equation]{Corollary}
\newtheorem{lem}[equation]{Lemma}
\theoremstyle{definition}
\newtheorem{dfn}[equation]{Definition}
\theoremstyle{remark}
\newtheorem{rem}[equation]{Remark}
\newtheorem{example}[equation]{Example}
\begin{document}

\title[Kazhdan-Lusztig left cell preorder and dominance order]
{Kazhdan-Lusztig left cell preorder and dominance order}
\subjclass[2010]{20C08, 16G99}
\keywords{Kazhdan-Lusztig basis, left cell, seminormal basis, dominance order}

\author{Zhekun He}\address{School of Mathematical and Statistics\\
	Beijing Institute of Technology\\
	Beijing, 100081, P.R. China}
\email{hzk3002@163.com}

\author{Jun Hu}\address{School of Mathematical and Statistics\\
	Beijing Institute of Technology\\
	Beijing, 100081, P.R. China}
\email{junhu404@bit.edu.cn}

\author[Corresponding author]{Yujiao Sun\textsuperscript{\Letter}}\thanks{\Letter Yujiao Sun \qquad Email: yujiao.sun@bit.edu.cn}
\address{School of Mathematical and Statistics\\
	Beijing Institute of Technology\\
	Beijing, 100081, P.R. China}
\email{yujiao.sun@bit.edu.cn}

\begin{abstract} Let ``$\leq_L$'' be the Kazhdan-Lusztig left cell preorder on the symmetric group $\Sym_n$. Let $w\mapsto (P(w),Q(w))$ be the Robinson-Schensted-Knuth correspondence between $\Sym_n$ and the set of standard tableaux with the same shapes. We prove that for any $x,y\in\Sym_n$, $x\leq_L y$ only if $Q(y)\unrhd Q(x)$, where ``$\unrhd$'' is the dominance (partial) order between standard tableaux. As a byproduct, we generalize an earlier result of Geck by showing that each Kazhdan-Lusztig basis element $C'_w$ can be expressed as a linear combination of some $\fm_{\u\v}$ which satisfies that $\u\unrhd P(w)^*$, $\v\unrhd Q(w)^*$, where $\t^*$ denotes the conjugate of $\t$ for each standard tableau $\t$, $\{\fm_{\s\t} \mid \s,\t\in\Std(\lam),\lam\vdash n\}$ is the Murphy basis of the Iwahori-Hecke algebra $\HH_{v}(\Sym_n)$ associated to $\Sym_n$.
\end{abstract}

\maketitle
\setcounter{tocdepth}{1}

\section{Introduction}

Let $(W,S)$ be a Coxeter system (\cite[Chapter 3]{B}). The Iwahori-Hecke algebra $\HH_v(W)$ associated to $(W,S)$ is a free $\Z[v,v^{-1}]$-module with basis $\{T_w\mid w\in W\}$ and multiplication determined by
$$
\begin{aligned}
T_s^2 &=(v-v^{-1})T_s+1, \forall\,s\in S, \\
T_wT_u &=T_{wu}, \quad\text{if $\ell(wu)=\ell(w)+\ell(u)$}.
\end{aligned}
$$

In their influential paper \cite{KL}, Kazhdan and Lusztig introduced the famous KL bases $\{C'_w \mid w\in W\}$ and KL polynomials $\{P_{y,w}\mid y,w\in W\}$ for the Iwahori-Hecke algebra $\HH_v(W)$. Though the definitions of these basis elements $C'_w$ and polynomials $P_{y,w}$ are elementary, deep connections with the geometry (such as intersection cohomology and perverse sheaves) were found. In particular, many properties of these bases and polynomials can only be proved via the categorification of Hecke algebra using perverse sheaves. Ever since, KL bases and KL polynomials play an important role in the representation theory of Lie algebras, algebraic groups, finite groups of Lie type and quantum groups.

Kazhdan and Lusztig used these bases $\{C'_w\mid w\in W\}$ to define three preorders $\leq_L, \leq_R$ and $\leq_{LR}$ on the Coxeter group $W$, which are called left cell preorder, right cell preorder and two-sided preorder. The associated equivalence classes are called the left cells, right cells and two-sided cells of $W$ respectively. We use the notation $x\sim_L y$ to mean that $x$, $y$ are in the same KL left cell. Similarly, we have the notations $x\sim_R y$ and $x\sim_{LR}y$. In the case $W$ is a Weyl group, these notions play an significant role in the study of representations of finite groups of Lie type and BGG category $\O$ of complex semisimple Lie algebras. However, as with the KL polynomials, there are no elementary combinatorial characterizations (without using KL bases) for these preorders and cells except in some special cases.

In this paper we focus on the symmetric group $\Sym_n$ on $\{1,2,\cdots,n\}$. Let $\RS\colon \Sym_n\rightarrow\sqcup_{\lam\vdash n}(\Std(\lam)\times\Std(\lam)), w\mapsto (P(w),Q(w))$ be the Robinson-Schensted-Knuth (bijective) correspondence (\cite[22.1.B]{B}, \cite[\S4.1]{F}), where $\lam\vdash n$ means $\lam$ is a partition of $n$, and $\Std(\lam)$ denotes the set of standard $\lam$-tableaux. With this combinatorial models, we have the following beautiful characterization of KL cells.

\begin{thm}[{\cite{KL}, \cite{V}, \cite{Du}, \cite{A}}] Let $x,y\in\Sym_n$. Suppose that $\RS(x)\in\Std(\lam)\times\Std(\lam)$, $\RS(y)\in\Std(\mu)\times\Std(\mu)$, where $\lam\vdash n, \mu\vdash n$. Then \begin{enumerate}
\item[1)] $x\sim_L y$ if and only if $Q(x)=Q(y)$;
\item[2)] $x\sim_R y$ if and only if $P(x)=P(y)$;
\item[3)] $x\sim_{LR} y$ if and only if $\lam=\mu$.
\end{enumerate}
\end{thm}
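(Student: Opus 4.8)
The three assertions are intertwined, and the plan is to reduce everything to 1). The $\Z[v,v^{-1}]$-algebra anti-automorphism of $\HH_v(\Sym_n)$ given by $T_w\mapsto T_{w^{-1}}$ sends $C'_w$ to $C'_{w^{-1}}$, hence interchanges $\leq_L$ and $\leq_R$; since $\RS(w^{-1})=(Q(w),P(w))$, substituting $w\mapsto w^{-1}$ throughout turns 1) into 2). Granting 1) and 2), statement 3) follows by a connectedness argument: by 1) the set $c_\lambda:=\{w\in\Sym_n:\Shape(w)=\lambda\}$ is the disjoint union of the left cells $\{w:Q(w)=Q_0\}$ for $Q_0\in\Std(\lambda)$, and any two of these left cells contain permutations that share a common $P$-symbol, hence are joined by an $\sim_R$-step via 2); thus $c_\lambda$ lies inside a single two-sided cell. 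Conversely every two-sided cell is a union of left cells, hence a union of sets $c_\lambda$, and the fact that only one $\lambda$ may occur is the monotonicity of the RSK shape along $\leq_{LR}$, a classical fact that is itself sharpened by the main theorem of this paper.

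For the implication $Q(x)=Q(y)\Rightarrow x\sim_L y$ in 1), I would use Knuth's combinatorics. By Knuth's theorem in dual form, $Q(x)=Q(y)$ holds if and only if $x$ and $y$ are connected by a chain of elementary dual Knuth transformations (equivalently, $x^{-1}$ and $y^{-1}$ are Knuth-equivalent, i.e.\ $P(x^{-1})=P(y^{-1})$). Each elementary dual Knuth move exchanges two consecutive values $i,i+1$ subject to a condition on a neighbouring value, and a short check with left descent sets shows it to be a Kazhdan--Lusztig left star operation $w\mapsto{}^{*}w$ attached to an adjacent pair $\{s_{i-1},s_i\}$ or $\{s_i,s_{i+1}\}$; since ${}^{*}w\sim_L w$ by the star-operation lemma of \cite{KL}, chaining the moves yields $x\sim_L y$. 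The companion statement $P(x)=P(y)\Rightarrow x\sim_R y$, which is what 2) needs, follows by applying $w\mapsto w^{-1}$.

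The last and genuinely harder implication is $x\sim_L y\Rightarrow Q(x)=Q(y)$, i.e.\ that left cells are no coarser than Knuth classes. Here the plan is to invoke Vogan's generalized $\tau$-invariant $\tau^{\infty}(w)$ (\cite{V}), obtained by closing the ordinary left $\tau$-invariant $\tau(w)=\mathcal L(w)$ under the wall-crossing operations $w\mapsto\{sw,tw\}$ attached to adjacent pairs $s,t$. Two facts are needed: first, $\tau^{\infty}$ is constant on left cells, which one extracts from the behaviour of the Kazhdan--Lusztig $\mu$-function under wall crossings and hence from the combinatorics of KL polynomials; and second, for $W=\Sym_n$ the value $\tau^{\infty}(w)$ already determines the recording tableau $Q(w)$, a purely combinatorial statement about RSK that can be traced through the dictionary between wall crossings and dual Knuth moves. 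Combining the two, $x\sim_L y$ forces $\tau^{\infty}(x)=\tau^{\infty}(y)$, whence $Q(x)=Q(y)$; see also \cite{A}.

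The main obstacle is precisely this last implication: the easy direction only yields the inclusion ``Knuth class $\subseteq$ left cell'', and ruling out the extra elements is the hard part, which historically required either Vogan's generalized $\tau$-invariant together with its explicit analysis in type $A$, or geometric input such as the irreducibility of the left cell representations of $\Sym_n$ (equivalently, parts of Lusztig's structure theory of cells). I expect the left-cell invariance of $\tau^{\infty}$ and the identification $\tau^{\infty}(w)\leftrightarrow Q(w)$ in type $A$ to carry the bulk of the work.
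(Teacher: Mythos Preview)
The paper does not contain a proof of this theorem; it is stated in the Introduction as a known result, with attribution to \cite{KL}, \cite{V}, \cite{Du}, \cite{A}, and no argument is given. There is therefore nothing in the paper to compare your proposal against directly.

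That said, your outline is essentially the classical route encoded in those very references. The reduction of 2) to 1) via the anti-automorphism $T_w\mapsto T_{w^{-1}}$ is standard; the identification of elementary dual Knuth moves with Kazhdan--Lusztig star operations, giving $Q(x)=Q(y)\Rightarrow x\sim_L y$, is exactly the argument in \cite{KL}; and the converse via Vogan's generalized $\tau$-invariant is the content of \cite{V} (with \cite{A} giving a streamlined treatment). Your derivation of 3) from 1) and 2) is also the usual one, though note a small potential circularity: you appeal to the monotonicity of the RSK shape along $\leq_{LR}$ to separate the sets $c_\lambda$, but some proofs of that monotonicity already use the theorem you are proving. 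A cleaner way to finish 3) is to observe that the number of left cells in $c_\lambda$ equals $|\Std(\lambda)|=\dim S(\lambda)$, and then use the Kazhdan--Lusztig decomposition of the regular representation to count, or alternatively to invoke the $a$-function in type $A$; either avoids the loop.
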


It is natural to ask if one can give some characterization of the Kazhdan-Lusztig preorders $\leq_L, \leq_R$ and $\leq_{LR}$ on the symmetric group $\Sym_n$ in terms of the combinatorics of standard tableaux via the Robinson-Schensted-Knuth correspondence $\RS$. Unfortunately, to the best of our knowledge, little is known in this direction except the following well-known result.

\begin{prop}[{\cite{Shi}, \cite{LX}, \cite[(2.13.1)]{DPS}, \cite[Theorem 5.1]{Gec}}] Let $x,y\in\Sym_n$. Suppose that $\RS(x)\in\Std^2(\lam)$, $\RS(y)\in\Std^2(\mu)$. Then $x\leq_{LR}y$ if and only if $\mu\unrhd\lam$.
\end{prop}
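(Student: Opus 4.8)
\medskip
\noindent\textbf{Proof strategy.} The plan is to move, via part~3) of the above theorem, to a partial order on partitions, and then to identify it by comparing Kazhdan--Lusztig two-sided ideals with the Dipper--James/Murphy filtration of $\HH_v(\Sym_n)$. Indeed, by that theorem $x\leq_{LR}y$ depends only on $\lambda$ and $\mu$, and $x\sim_{LR}y$ amounts to $\lambda=\mu$, so $\leq_{LR}$ descends to a genuine partial order on $\{\nu\vdash n\}$; we must show that the two-sided cell of shape $\lambda$ is $\leq_{LR}$ that of shape $\mu$ exactly when $\mu\unrhd\lambda$. For convenient representatives, recall that for a partition $\nu$ the longest element $w_\nu$ of the Young subgroup $\Sym_\nu\subseteq\Sym_n$ is, in one-line notation, a concatenation of decreasing runs of lengths $\nu_1,\nu_2,\dots$; hence by Greene's theorem its Robinson--Schensted shape is the conjugate $\nu'$, so $w_{\lambda'}$ and $w_{\mu'}$ lie in the cells of shapes $\lambda$ and $\mu$, and it suffices to decide whether $w_{\lambda'}\leq_{LR}w_{\mu'}$.

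For the implication $\mu\unrhd\lambda\Rightarrow x\leq_{LR}y$ I would argue as follows. Since every Kazhdan--Lusztig polynomial $P_{z,w_\nu}$ with $z\leq w_\nu$ equals $1$ (the longest element $w_\nu$ of a standard parabolic being rationally smooth), we have $C'_{w_\nu}=v^{-\ell(w_\nu)}x_\nu$ with $x_\nu:=\sum_{z\in\Sym_\nu}v^{\ell(z)}T_z$, so $\HH C'_{w_\nu}\HH=\HH x_\nu\HH$. Thus $w_{\lambda'}\leq_{LR}w_{\mu'}$, i.e.\ $C'_{w_{\lambda'}}\in\HH C'_{w_{\mu'}}\HH$, will follow from $\HH x_{\lambda'}\HH\subseteq\HH x_{\mu'}\HH$; and since $\mu\unrhd\lambda\iff\lambda'\unrhd\mu'$, this is the standard fact of Dipper--James/Murphy theory that, writing $\HH^{\unrhd\nu}$ for the span of those Murphy basis elements $\fm_{\s\t}$ with $\Shape(\s)=\Shape(\t)\unrhd\nu$, the two-sided ideal $\HH^{\unrhd\nu}$ is generated by $x_\nu$; hence $\HH x_{\lambda'}\HH=\HH^{\unrhd\lambda'}\subseteq\HH^{\unrhd\mu'}=\HH x_{\mu'}\HH$. (This nesting reduces to covering pairs $\nu\gtrdot\tau$ in the dominance order, for which one writes $x_\nu$ explicitly as an element of $\HH x_\tau\HH$; representation-theoretically it says every Specht factor $S^\kappa$ of the permutation module $M^{\nu}$ occurs in $M^{\tau}$, as $\kappa\unrhd\nu\unrhd\tau$.)

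For the converse $x\leq_{LR}y\Rightarrow\mu\unrhd\lambda$ I would invoke a theorem of Geck --- the result this paper refines --- in its sharp form: for every $w\in\Sym_n$, $C'_w$ is a linear combination of Murphy basis elements $\fm_{\s\t}$ whose common shape dominates $\Shape(P(w))'$, and the coefficient of at least one $\fm_{\s\t}$ of shape exactly $\Shape(P(w))'$ is a unit; equivalently $C'_w\in\HH^{\unrhd\Shape(P(w))'}\setminus\HH^{\rhd\Shape(P(w))'}$. Granting this: if $x\leq_{LR}y$, then $C'_x\in\HH C'_y\HH\subseteq\HH^{\unrhd\mu'}$, the last inclusion because $\HH^{\unrhd\mu'}$ is a two-sided ideal containing $C'_y$; so every Murphy basis element occurring in $C'_x$ has shape $\unrhd\mu'$, while one of them has shape $\lambda'$, whence $\lambda'\unrhd\mu'$, i.e.\ $\mu\unrhd\lambda$. (Equivalently, Geck's result may be read as saying that $\{C'_w\}_{w\in\Sym_n}$ is itself a cellular basis of $\HH_v(\Sym_n)$ with weight poset $(\{\nu\vdash n\},\unrhd)$ under $w\mapsto\Shape(P(w))'$; then ``$\Rightarrow$'' is automatic for any cellular basis, and ``$\Leftarrow$'' follows from non-degeneracy of the cell bilinear forms, which holds since $\HH_v(\Sym_n)\otimes\Q(v)$ is split semisimple with the Specht modules as cell modules.)

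The step I expect to be the main obstacle is exactly this last input --- the compatibility of the Kazhdan--Lusztig basis with the Murphy filtration together with the non-vanishing of its leading term. This is genuinely the heart of the matter, and it is precisely the circle of ideas that the rest of the paper sharpens --- from the two-sided statement proved here to $x\leq_L y\Rightarrow Q(y)\unrhd Q(x)$ and to the refined expansion $C'_w=\sum_{\u\unrhd P(w)^*,\,\v\unrhd Q(w)^*}c_{\u\v}\fm_{\u\v}$. For the present statement, then, the honest move is simply to cite Geck (or any of \cite{Shi,LX,DPS}), while the self-contained half ``$\Leftarrow$'' needs only the explicit Young-subgroup representatives together with the elementary nesting of the ideals $\HH x_\nu\HH$.
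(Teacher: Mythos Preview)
The paper does not give its own proof of this proposition; it is stated in the introduction purely as a known background result with citations to \cite{Shi}, \cite{LX}, \cite{DPS} and \cite{Gec}. So there is no ``paper's proof'' to compare against, and your proposal should be read as an independent sketch of a proof of a cited fact.

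Your strategy is sound and essentially one of the standard routes (close in spirit to \cite{Gec}). The ``$\Leftarrow$'' half is clean: the identification $C'_{w_\nu}=v^{-\ell(w_\nu)}x_\nu$, the fact that $\RS(w_\nu)$ has shape $\nu'$, and the nesting $\HH x_{\lambda'}\HH\subseteq\HH x_{\mu'}\HH$ for $\lambda'\unrhd\mu'$ are all correct and well known. For ``$\Rightarrow$'' your cellular-basis reformulation is the right way to phrase it; note however that the line ``if $x\leq_{LR}y$, then $C'_x\in\HH C'_y\HH$'' is not immediate from the paper's definition of $\leq_{LR}$ (a transitive closure of $\leq_L$ and $\leq_R$ steps): appearing as a summand in some $hC'_y$ is not the same as lying in the ideal $\HH C'_y\HH$. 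What you actually need, and what Geck's unitriangularity gives directly, is that $\HH^{\unrhd\mu'}$ is simultaneously a two-sided ideal and a span of $C'$-basis elements; hence any two-sided ideal spanned by $C'$'s and containing $C'_y$ must contain every $C'_z$ with $z\leq_{LR}y$, by induction on the length of a $\leq_L/\leq_R$ chain. Replacing the unjustified intermediate step $C'_x\in\HH C'_y\HH$ by this argument (or by your parenthetical cellular-basis remark) closes the gap. Your own final paragraph already flags that the genuine content lies in Geck's compatibility theorem, which is exactly why the paper cites rather than proves this statement.
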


The following theorem is {\bf the first main result} of this paper, which can be viewed as a first step in the combinatorial characterization of the left cell and right cell preorders
$\leq_L, \leq_R$.

\begin{thm}\label{mainthm1} Let $x,y\in\Sym_n$. If $x\leq_{L}y$, then $Q(y)\unrhd Q(x)$. Equivalently, if $x\leq_{R}y$, then $P(y)\unrhd P(x)$.
\end{thm}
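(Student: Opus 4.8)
The plan is to reduce the statement to a single Knuth move, that is, to the case where $y = xs$ for a simple reflection $s$ with $\ell(xs) = \ell(x)+1$, or more precisely to the generating steps of the left cell preorder $\leq_L$. Recall that $x \leq_L y$ means there is a chain $x = x_0, x_1, \dots, x_k = y$ in which each consecutive pair satisfies $x_{i} \leq_L x_{i-1}$ via an elementary step: $C'_{x_i}$ appears with nonzero coefficient in $C'_s C'_{x_{i-1}}$ for some $s \in S$. Since the dominance order $\unrhd$ on standard tableaux is transitive, it suffices to prove that each such elementary step $x_i \leq_L x_{i-1}$ forces $Q(x_{i-1}) \unrhd Q(x_i)$. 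So I would fix $w \in \Sym_n$ and $s \in S$, and analyze which $Q(z)$ can occur for $z$ with $C'_z$ appearing in $C'_s C'_w$.

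The key tool is the second main result advertised in the abstract (the generalization of Geck's theorem): each $C'_w$ is a linear combination of Murphy basis elements $\fm_{\u\v}$ with $\u \unrhd P(w)^*$ and $\v \unrhd Q(w)^*$, where $\ast$ denotes conjugation of standard tableaux. I would first establish this triangularity statement for the full KL basis, working in the Iwahori--Hecke algebra $\HH_v(\Sym_n)$ and using the cellular-algebra structure afforded by the Murphy basis: the two-sided ideal $\HH^{\unrhd\lam}$ spanned by $\{\fm_{\s\t} : \s,\t \in \Std(\mu), \mu \unrhd \lam\}$ is invariant under both left and right multiplication, and the Specht module $S^\lam$ has a basis indexed by $\Std(\lam)$ on which the action of $C'_s$ can be computed in the seminormal form. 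Combining the left-cell version (controlled by $P(w)^\ast$) and the right-cell version (controlled by $Q(w)^\ast$) via the bar-invariance and the symmetry $w \mapsto w^{-1}$ gives the two-sided triangularity.

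Granting that, the elementary step is handled as follows. Multiplying $C'_w = \sum_{\u \unrhd P(w)^\ast,\, \v \unrhd Q(w)^\ast} r_{\u\v}\, \fm_{\u\v}$ on the left by $C'_s$ and expanding again in the Murphy basis, the right index $\v$ can only move \emph{upward} in dominance (left multiplication cannot decrease the right Murphy index, since $\HH^{\unrhd\mu}$ is a right ideal and, more sharply, $C'_s$ acts within each Specht layer and on a Specht module the action of $C'_s$ in the seminormal basis is triangular for the dominance order on tableaux). Hence every $z$ with $C'_z$ occurring in $C'_s C'_w$ satisfies $Q(z)^\ast \unrhd$ something $\unrhd Q(w)^\ast$ at the level of the Murphy index of $C'_z$ itself; unwinding the conjugation (which reverses dominance: $\t^\ast \unrhd \s^\ast \iff \s \unrhd \t$) turns this into $Q(w) \unrhd Q(z)$, which is exactly the inequality needed for the elementary step. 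The ``equivalently'' clause then follows by applying the result to $x^{-1} \leq_L y^{-1}$, using $P(x) = Q(x^{-1})$.

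The main obstacle I anticipate is pinning down the precise triangular behavior of $C'_s$ on the Murphy/seminormal basis in the \emph{mixed} situation — i.e.\ showing that left multiplication by $C'_s$ cannot produce a Murphy term whose \emph{right} index $\v$ is strictly smaller in dominance than those already present, \emph{without} accidentally also constraining it to stay equal. This requires care because $C'_s$ acts on each cell module and one must track what happens across the filtration by the ideals $\HH^{\unrhd\lam}$, not just within a single Specht module; the conjugation twist (Murphy triangularity is ``upward'' while the RSK shape sits at $P(w)^\ast$, $Q(w)^\ast$) is also a recurring source of sign-of-inequality bookkeeping that must be done consistently. Once the one-step statement is secured, transitivity and the $x \leftrightarrow x^{-1}$ symmetry make the rest routine.
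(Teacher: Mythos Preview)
Your overall strategy is the paper's: use the refined triangularity between the KL basis and the seminormal/Murphy basis (Theorem~\ref{mainthm3}/\ref{mainthm2}), observe that left multiplication cannot lower the right index, and then convert back via the inverse triangularity. Two points, however, need correction.

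First, your justification for the crucial step is the wrong one. You argue that ``the right index $\v$ can only move upward in dominance'' because $\HH^{\unrhd\mu}$ is an ideal and $C'_s$ acts triangularly on the Specht module. The cellular ideal filtration only controls the \emph{shape}: it tells you $h\,\fm_{\s\t}\equiv\sum_{\u}r_\u\fm_{\u\t}\pmod{\HH^{\rhd\lam}}$, and the error terms in $\HH^{\rhd\lam}$ can have right tableau index $\v$ with $\Shape(\v)\rhd\lam$ but $\v\not\unrhd\t$ (e.g.\ $\lam=(2,2)$, $\t=\t^{\lam}$, and $\v\in\Std(3,1)$ with $2$ in the second row). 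Likewise, the Specht-module action of $C'_s$ governs the \emph{left} index under left multiplication, not the right one. The correct (and much simpler) reason is that in the \emph{seminormal} basis the elements are essentially matrix units, $\mff_{\s\t}\mff_{\u\v}=\delta_{\t\u}\gamma_\t\mff_{\s\v}$, so for \emph{any} $h$ one has $h\,\mff_{\s\t}\in\sum_{\s'\in\Std(\Shape(\s))}\Q(v)\,\mff_{\s'\t}$: the right index is literally preserved. This is exactly how the paper runs the argument (see equations~\eqref{Cyfst1}--\eqref{Cyfst3}), and it makes your reduction to a single $C'_s$ unnecessary---arbitrary $h$ works in one shot. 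To recover $Q(y)\unrhd Q(x)$ you then need the \emph{inverse} triangularity (second equality in Theorem~\ref{mainthm3}), which you allude to but do not state.

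Second, you propose to ``first establish'' the refined triangularity via cellular structure, Specht-module triangularity, and the $w\mapsto w^{-1}$ symmetry. That sketch does not produce the $\Gdom$-upgrade over Geck's $\rhd$-version: the whole difficulty is controlling the tableau indices across \emph{different} shapes, which the cellular filtration alone cannot do. The paper's proof of Theorem~\ref{mainthm3} is quite different and more delicate: it restricts $C'_w\HH_{\Q(v)}(\Sym_n)$ to each $\HH_{\Q(v)}(\Sym_m)$, uses Geck's parabolic decomposition (Lemma~\ref{dmu1}) together with the RSK compatibility $Q^{[n]}(w)\!\downarrow_m=Q^{[m]}(w_m)$ (Proposition~\ref{prop:wTow1-n-m}), and argues by contradiction on the simple constituents that can occur. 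Your outline does not substitute for this.
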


\begin{rem}
In general, the converse of Theorem \ref{mainthm1} does not hold.
For example, in $\mathfrak{S}_3$, we have that
$$Q(s_2)=
\ytableausetup{textmode}
\begin{ytableau}
 1 & 2  \\
 3 \\
\end{ytableau}
\rhd
\ytableausetup{textmode}
\begin{ytableau}
 1 & 3  \\
 2 \\
\end{ytableau}
= Q(s_1),$$
but $s_1\nleq_{L} s_2$. Similarly, in $\mathfrak{S}_4$, we have that
$$Q(s_2)=
\ytableausetup{textmode}
\begin{ytableau}
 1 & 2 & 4 \\
 3 \\
\end{ytableau}
\rhd
\ytableausetup{textmode}
\begin{ytableau}
 1 & 3  \\
 2 & 4  \\
\end{ytableau}
= Q(s_1s_3),$$
but $s_1s_3\nleq_{L} s_2$. Moreover, even we combine the dominance condition with the inverse inclusion condition of their right descent sets, it is still not sufficient to imply the left cell preorder $\leq_L$. In fact, in $\mathfrak{S}_5$, we have that
$$Q(s_2s_3s_4s_3s_2)=
\ytableausetup{textmode}
\begin{ytableau}
 1 & 2 & 4 \\
 3 \\
 5 \\
\end{ytableau}
\rhd
\ytableausetup{textmode}
\begin{ytableau}
 1 & 4  \\
 2 & 5  \\
 3 \\
\end{ytableau}
= Q(s_4s_1s_2s_1)$$
and
$$
\mathcal{R}(s_2s_3s_4s_3s_2)
=\{s_2,s_4\}
\subseteq
\{s_1, s_2,s_4\}
=\mathcal{R}(s_4s_1s_2s_1),
$$
but $s_4s_1s_2s_1 \nleq_{L} s_2s_3s_4s_3s_2$ because one can calculate that $D'_{s_4s_1s_2s_1}C'_{s_2s_3s_4s_3s_2 }=0$, where $\mathcal{R}(w):=\{s_k\mid 1\leq k<n, ws_k<w\}$ is the right descent set of $w$, and $D'_w$ is the dual Kazhdan-Lusztig basis element as defined in \cite[(5.1.7)]{Lu0}.
\end{rem}

Our approach to the proof of the above theorem is to investigate some subtle relationship between the Kazhdan-Lusztig basis $\{C'_w\mid w\in\Sym_n\}$ and the seminormal basis $\{\mff_{\s\t}\mid \s,\t\in\Std(\lam),\lam\vdash n\}$ of the semisimple Hecke algebra $\HH_{\Q(v)}(\Sym_n):=\Q(v)\otimes_{\Z[v,v^{-1}]}\HH_v(\Sym_n)$. The seminormal basis $\{\mff_{\s\t}\mid \s,\t\in\Std(\lam),\lam\vdash n\}$ is a nice basis of $\HH_{\Q(v)}(\Sym_n)$ which can be used to construct matrix units of the Hecke algebra and only exists inside the semisimple Hecke algebra $\HH_{\Q(v)}(\Sym_n)$. As a byproduct, we generalize an earlier result of Geck \cite{Gec} on the relationship between the Kazhdan-Lusztig basis $\{C'_w\mid w\in\Sym_n\}$ and the Murphy basis $\{\fm_{\s\t}\mid \s,\t\in\Std(\lam),\lam\vdash n\}$ of $\HH_v(\Sym_n)$. To state our second main result, we first recall the following two different partial orders on $\Std^2(n):=\sqcup_{\lam\vdash n}\Std(\lam)\times\Std(\lam)$. Let ``$\rhd$'' be the dominance (partial) order defined on the set of partitions of $n$ and also on the set of standard tableaux (see Section 2 for detailed definitions).

\begin{dfn} Let $\lam,\mu\vdash n$, $(\s,\t)\in\Std(\lam)\times\Std(\lam)$ and $(\u,\v)\in\Std(\mu)\times\Std(\mu)$.  We define $$
\text{$(\s,\t)\unrhd (\u,\v)$ if either $\lam\rhd\mu$ or $\lam=\mu$, $\s\unrhd\u$ and $\t\unrhd\v$}.
$$
If $(\s,\t)\unrhd (\u,\v)$ and $(\s,\t)\neq (\u,\v)$, the we write $(\s,\t)\rhd(\u,\v)$. We also define $$
\text{$(\s,\t)\Gedom (\u,\v)$ if $\s\unrhd\u$ and $\t\unrhd\v$}.
$$
If $(\s,\t)\Gedom (\u,\v)$ and $(\s,\t)\neq (\u,\v)$, the we write $(\s,\t)\Gdom(\u,\v)$.
\end{dfn}

In particular, if $(\s,\t)\Gedom (\u,\v)$ then $\Shape(s)=\Shape(\t)\unrhd\Shape(\u)=\Shape(\v)$. It follows that the partial order ``$\Gedom$'' is stronger than the partial order ``$\unrhd$'' on $\Std(\lam)\times\Std(\lam)$. In order to simplify the notations, we henceforth make the following definition on notations.

\begin{dfn} Let $w\in\Sym_n$. We define $$
\u_w:=P(w)^*,\quad \v_w:=Q(w)^* ,
$$
where $\t^*$ denotes the conjugate of $\t$ for each standard tableau $\t$.
\end{dfn}

In addition to the Kazhdan-Lusztig basis $\{C'_w\mid w\in\Sym_n\}$, the Hecke algebra $\HH_v(\Sym_n)$ has yet another $\Z[v,v^{-1}]$-basis---the Murphy basis $\{\fm_{\s\t}\mid \s,\t\in\Std(\lam),\lam\vdash n\}$, which was first introduced by Murphy \cite{Mur}. Both the Murphy basis and the Kazhdan-Lusztig basis $\{C'_w\mid w\in\Sym_n\}$ are cellular bases in the sense of Graham and Lehrer \cite{GL}. The following theorem, which reveals some relationship between the Kazhdan-Lusztig basis $\{C'_w\}$ and the Murphy basis $\{\fm_{\s\t}\}$ of $\HH_v(\Sym_n)$, is actually a (slightly strengthened) reformulation of a remarkable result of Geck.

\begin{thm}[{\cite[Corollaries 4.11 and 5.11]{Gec}}]\label{Geckresult1}
\label{geclPLMS}
Let $\lam\vdash n$. For each $w\in\Sym_n$ with $\RS(w)\in\Std(\lam)\times\Std(\lam)$, we have that $$\begin{aligned}
\fm_{\u_w,\v_w}&= v^{\ell(w_{\lam^*,0})}C'_w+\sum_{\substack{y\in\Sym_n\\ (\u_y,\v_y)\rhd (\u_w,\v_w)}}r_{w,y}C'_y,\\
C'_w&= v^{-\ell(w_{\lam^*,0})}\fm_{\u_w,\v_w}+\sum_{\substack{(\s,\t)\in\Std^2(n)\\ (\s,\t)\rhd (\u_w,\v_w)}}r_{w,\s,\t}\fm_{\s\t},
\end{aligned}
$$
where $r_{w,y}, r_{w,\s,\t}\in\Z[v,v^{-1}]$ for each pair $(w,y)$ and each triple $(w,\s,\t)$.
\end{thm}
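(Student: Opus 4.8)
The plan is to deduce both displayed identities from Geck's results \cite[Cor.~4.11 and 5.11]{Gec}, after installing one bijection and one triangular-inversion principle. First I would note that $w\mapsto(\u_w,\v_w)=(P(w)^*,Q(w)^*)$ is a bijection from $\Sym_n$ onto $\Std^2(n)$: it is the composite of $\RS$ with the shape-transposing bijection $\t\mapsto\t^*$ on each $\Std(\lam)$. Hence $\{\fm_{\u_w\v_w}\mid w\in\Sym_n\}$ is a $\Z[v,v^{-1}]$-basis of $\HH_v(\Sym_n)$ indexed by the \emph{same} set as $\{C'_w\mid w\in\Sym_n\}$, and transporting ``$\rhd$'' along this bijection makes $w\mapsto(\u_w,\v_w)$ an isomorphism of finite posets. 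Next, a square matrix over $\Z[v,v^{-1}]$ indexed by a finite poset, triangular with respect to that poset and with each diagonal entry a unit of $\Z[v,v^{-1}]$, is invertible, and its inverse is again triangular with the reciprocal diagonal entries. Granting these two observations, it suffices to establish the first identity; the second then comes out by reading off the row of the inverse transition matrix indexed by $w$, using that \emph{every} $(\s,\t)\in\Std^2(n)$ equals $(\u_y,\v_y)$ for a unique $y$, so the sum over $(\s,\t)\rhd(\u_w,\v_w)$ is literally the sum over those $y$ with $(\u_y,\v_y)\rhd(\u_w,\v_w)$.

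For the first identity I would start from Geck's theorem that $\{C'_w\}$ and $\{\fm_{\s\t}\}$ are cellular bases of $\HH_v(\Sym_n)$ for the \emph{same} chain of two-sided ideals indexed by partitions under ``$\unrhd$'', matched up by conjugation $\lam\leftrightarrow\lam^*$. Applied to $\fm_{\u_w\v_w}$ (with $\RS(w)\in\Std(\lam)^2$), this already gives an expansion $\fm_{\u_w\v_w}=c_w\,C'_w+\sum_y r_{w,y}C'_y$ in which every $C'_y$ occurring has $\Shape(P(y))\unlhd\lam$; since (by the characterization $x\leq_{LR}y\iff\Shape(y)\unrhd\Shape(x)$) the condition ``$\Shape(P(y))\lhd\lam$'' is exactly ``$(\u_y,\v_y)\rhd(\u_w,\v_w)$ with strict shape inequality'', this is the coarse ``$\Shape$'' half of the desired triangularity. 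To upgrade it to the finer triangularity that ``$\rhd$'' records — that among the terms with $\Shape(P(y))=\lam$ one has $r_{w,y}\neq 0$ only if $\u_y\unrhd\u_w$ and $\v_y\unrhd\v_w$ — I would pass to the subquotient cut out by the shape-$\unrhd\lam^*$ and shape-$\rhd\lam^*$ ideals, on which the Murphy structure is carried by the cell module $\Delta(\lam^*)$ filtered by the dominance order on $\Std(\lam^*)$; Murphy's refined multiplication rule $\fm_{\s\t}h\equiv\sum_{\s'\unrhd\s}r_{\s'}\fm_{\s'\t}$ modulo larger shapes, together with its right-handed analogue, forces the restricted transition matrix to be triangular for the product of dominance orders on $\Std(\lam^*)\times\Std(\lam^*)$, and transporting through $\t\leftrightarrow\t^*$, which \emph{reverses} dominance of standard tableaux, turns this into exactly the asserted vanishing of the $r_{w,y}$.

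To pin down the diagonal entry, I would use that $c_w$ depends only on the two-sided cell — it is a power of $v$ governed by Lusztig's $\mathbf a$-function, which is constant on two-sided cells — and evaluate at $w=w_{\lam^*,0}$, the longest element of the Young subgroup $\Sym_{\lam^*}$. Its Robinson--Schensted--Knuth shape is $(\lam^*)^*=\lam$, and $P(w_{\lam^*,0})=Q(w_{\lam^*,0})$ has conjugate equal to the row-superstandard tableau $\t^{\lam^*}$, whence $\fm_{\u_w\v_w}=\fm_{\t^{\lam^*}\t^{\lam^*}}=x_{\lam^*}$; on the other hand $x_{\lam^*}=v^{\ell(w_{\lam^*,0})}C'_{w_{\lam^*,0}}$ because every Kazhdan--Lusztig polynomial $P_{y,w_{\lam^*,0}}$ is trivial. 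Therefore $c_w=v^{\ell(w_{\lam^*,0})}$ for all $w$ with $\RS(w)\in\Std(\lam)^2$, which is the first identity; the diagonal entry of the inverse transition matrix is then $v^{-\ell(w_{\lam^*,0})}$, which is the second.

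The hard part will be the second paragraph: extracting the \emph{refined} triangularity with respect to ``$\rhd$'', as opposed to the coarse ``$\Shape$'' triangularity that is transparent from cellularity, is precisely where this statement strengthens Geck's literal formulation, and it requires checking that the Murphy and Kazhdan--Lusztig cellular filtrations of a \emph{single} two-sided cell are compatible after the conjugation twist, not merely that the two bases span the same chain of two-sided ideals. A secondary, largely bookkeeping, difficulty is to keep careful track of the conjugation $\t\mapsto\t^*$ — it is what forces $\lam^*$, $\u_w=P(w)^*$ and $\v_w=Q(w)^*$ into the statement — and to exploit that conjugating standard tableaux reverses the dominance order, which is exactly why the two cell-module filtrations of a fixed cell match head to toe.
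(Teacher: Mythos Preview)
Your route is substantially different from the paper's, and also longer and shakier in places. The paper treats this theorem as essentially a citation: the displayed identities are a rephrasing of \cite[Corollaries 4.11 and 5.11]{Gec}, and the only work the authors do is supply the dictionary between Geck's bijection $(\lam,i,j)\mapsto\mathbf{w}_\lam(i,j)$ and the RSK-based map $w\mapsto(\u_w,\v_w)=(P(w)^*,Q(w)^*)$. That dictionary is Lemma~\ref{recon}, which computes $P(d(\s)^{-1}w_{\lam,0})=\s^*$, $Q(w_{\lam,0}d(\t))=\t^*$ and hence $\mathbf{w}_\lam(i,j)=\pi_{\lam^*}(\s^*,\t^*)$; once that is in hand, Geck's formulas translate directly into the displayed ones. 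The Remark following the theorem already flags which three points of Geck's literal statement require the extra reading (the sign $\eta'_w=1$, the bijection, and the implicit same-shape refinement $\s\unrhd\u_w$, $\t\unrhd\v_w$), and indicates where in \cite{Gec} each is hidden.

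By contrast, you try to re-derive the content of Geck's corollaries, and two of your steps are problematic. First, the ``refined multiplication rule'' you invoke, $\fm_{\s\t}h\equiv\sum_{\s'\unrhd\s}r_{\s'}\fm_{\s'\t}$ modulo larger shapes, is not a fact about the Murphy basis: right multiplication acts on the \emph{second} index, and in any case the cellular axioms impose no dominance restriction among the standard tableaux of a \emph{fixed} shape. The intra-shape triangularity you want genuinely comes from matching Kazhdan--Lusztig left and right cells with Geck's indexing $\mathbf{w}_\lam(i,j)$, which is precisely the work done in \cite{Gec} that you are trying to bypass. Second, your diagonal-entry argument assumes that $c_w$ is a power of $v$ depending only on the two-sided cell; that is essentially the content of \cite[Corollary 5.11]{Gec} (in particular the $\eta'_w=1$ claim), so at that point you are tacitly citing Geck anyway. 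Your triangular-inversion framework for passing between the two identities is fine, but the substance still has to come from \cite{Gec}, and the paper simply cites it via Lemma~\ref{recon}.
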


\begin{rem}
The formulation in \cite[Corollary 4.11]{Gec} is actually slightly weaker than the theorem we stated above in three points: Firstly, there is a sign $\eta'_w$ in \cite[Corollary 4.11]{Gec} which we removed here; secondly, \cite[Corollary 4.11]{Gec} used the bijection $(\lam,i,j)\mapsto \mathbf{w}_\lam(i,j)$ instead of the correspondence $w\mapsto (\u_w,\v_w)=(P(w)^*,Q(w)^*)$; thirdly, \cite[Corollary 4.11]{Gec} did not
explicitly claim that $\s,\t\in\Std(\lam^*)$ and $r_{w,\s,\t}\neq 0$ imply that $\s\unrhd\u_w$ and $\t\unrhd\v_w$. In fact, $\eta'_w=1$ follows from the proof of \cite[Corollary 5.11]{Gec}. The reconciliation of the two bijection in the second point will be explained in Lemma \ref{recon}, while the third point is implicitly implied  by the proof of \cite[Corollary 4.11]{Gec}.
\end{rem}

The following two theorems are {\bf the second main result} of this paper, which gives a generalization of the above result of Geck.

\begin{thm}\label{mainthm2} Let $\lam\vdash n$. For each $w\in\Sym_n$ with $\RS(w)\in\Std(\lam)\times\Std(\lam)$, we have that $$\begin{aligned}
C'_w &= v^{-\ell(w_{\lam^*,0})}\fm_{\u_w,\v_w}+\sum_{\substack{(\s,\t)\in\Std^2(n)\\ (\s,\t)\Gdom (\u_w,\v_w)}}r_{w,\s,\t}\fm_{\s\t},\\
\fm_{\u_w,\v_w} &= v^{\ell(w_{\lam^*,0})}C'_w+\sum_{\substack{y\in\Sym_n\\ (\u_y,\v_y)\Gdom (\u_w,\v_w)}}r_{w,y}C'_y,
\end{aligned}$$
where $r_{w,\s,\t},r_{w,y}\in\Z[v,v^{-1}]$ for each triple $(w,\s,\t)$ and each pair $(w,y)$.
\end{thm}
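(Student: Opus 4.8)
The plan is to deduce the stronger statement with "$\Gdom$" in place of "$\rhd$" directly from Geck's result (Theorem \ref{Geckresult1}) together with Theorem \ref{mainthm1}. Recall that Geck's theorem already gives $C'_w = v^{-\ell(w_{\lam^*,0})}\fm_{\u_w,\v_w}+\sum r_{w,\s,\t}\fm_{\s\t}$ summed over $(\s,\t)\rhd(\u_w,\v_w)$, and the accompanying remark (to be verified via the proof of \cite[Corollary 4.11]{Gec}) records that in fact $r_{w,\s,\t}\ne 0$ forces $\s,\t\in\Std(\lam^*)$ with $\s\unrhd\u_w$, $\t\unrhd\v_w$. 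Combining "$\rhd$" with this extra constraint is almost "$\Gdom$", except that "$\rhd$" also allows the strictly larger-shape terms; so the real content is to show that no term $\fm_{\s\t}$ with $\Shape(\s)=\Shape(\t)\rhd\lam^*$ can occur. This is exactly where Theorem \ref{mainthm1} enters. First I would pass to the companion expansion $\fm_{\u_w,\v_w} = v^{\ell(w_{\lam^*,0})}C'_w+\sum_{(\u_y,\v_y)\rhd(\u_w,\v_w)} r_{w,y}C'_y$ from Geck's theorem, and show the index set can be shrunk: if $r_{w,y}\ne 0$ then $(\u_y,\v_y)\Gdom(\u_w,\v_w)$, i.e. $\u_y\unrhd\u_w$ and $\v_y\unrhd\v_w$.

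The key step is translating the appearance of $C'_y$ in $\fm_{\u_w,\v_w}$ into a cell-preorder statement. Since $\{\fm_{\s\t}\}$ is a cellular basis and $\fm_{\u_w,\v_w}$ lies in the cell module / two-sided ideal filtration piece indexed by $\lam^*$, and $C'_w$ is the "leading" term, the coefficients $r_{w,y}$ govern which $C'_y$ lie in the $\HH$-bimodule generated by $C'_w$ modulo higher terms; concretely, $r_{w,y}\ne0$ should imply $y\le_{LR}w$, and with a bit more care $y\le_L w$ and $y\le_R w$ (this is essentially how Geck's argument runs, via the fact that the change of basis between the Murphy and KL cellular structures is unitriangular with respect to compatible cell preorders). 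Granting $y\le_L w$ and $y\le_R w$, Theorem \ref{mainthm1} gives $Q(w)\unrhd Q(y)$ and $P(w)\unrhd P(y)$; conjugating reverses dominance, so $Q(y)^*\unrhd Q(w)^*$ and $P(y)^*\unrhd P(w)^*$, that is $\v_y\unrhd\v_w$ and $\u_y\unrhd\u_w$, which is precisely $(\u_y,\v_y)\Gedom(\u_w,\v_w)$. Discarding the equality case yields the second displayed formula. The first formula then follows by inverting: both $\{\fm_{\u_y,\v_y}\}_{y}$ and $\{C'_y\}_y$ are bases, the transition matrix is unitriangular with respect to the partial order "$\Gdom$" on the finite poset $\{(\u_y,\v_y):y\in\Sym_n\}$ (each value $(\u_y,\v_y)$ occurring exactly once by the bijectivity of $\RS$), and the inverse of a unitriangular matrix over a poset is again unitriangular with respect to the same poset; comparing with Geck's expansion $C'_w=v^{-\ell(w_{\lam^*,0})}\fm_{\u_w,\v_w}+\sum_{(\s,\t)\rhd(\u_w,\v_w)}r_{w,\s,\t}\fm_{\s\t}$ pins down that the surviving $(\s,\t)$ are exactly those of the form $(\u_y,\v_y)$ with $(\u_y,\v_y)\Gdom(\u_w,\v_w)$.

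I expect the main obstacle to be the bookkeeping in the translation step: making precise, from the cellular-algebra formalism, the implication "$r_{w,y}\ne0\Rightarrow y\le_L w$ and $y\le_R w$" rather than merely "$y\le_{LR}w$". The two-sided statement is immediate from Proposition \ref{prop} (i.e. the shape comparison already present in Geck's "$\rhd$"), but upgrading to the one-sided preorders requires looking at the left and right module structures separately; here one uses that $\fm_{\u_w,\v_w}$ generates, as a left module modulo higher-shape terms, the same cell module as $C'_w$, and symmetrically on the right, so the expansion coefficients respect $\le_L$ and $\le_R$ individually. An alternative route that avoids re-proving this is to quote it from \cite{Gec} directly, since Geck's proof of \cite[Corollary 5.11]{Gec} already establishes the needed one-sided compatibility en route to the two-sided statement; I would cite that and then feed it into Theorem \ref{mainthm1}. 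The remaining point to be careful about is the direction of dominance under conjugation of tableaux, $\t\unrhd\s \iff \s^*\unrhd\t^*$, which must be invoked consistently; this is standard (Section 2) and causes no real difficulty.
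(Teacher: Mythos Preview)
Your proposal takes a different route from the paper, and it has two genuine problems.

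First, you have misread the relation between ``$\rhd$'' and ``$\Gdom$''. You write that ``the real content is to show that no term $\fm_{\s\t}$ with $\Shape(\s)=\Shape(\t)\rhd\lam^*$ can occur.'' That is false: such terms \emph{can and do} occur in the expansion of $C'_w$. The order $\Gdom$ does not exclude larger shapes; $(\s,\t)\Gdom(\u_w,\v_w)$ simply means $\s\unrhd\u_w$ and $\t\unrhd\v_w$ as \emph{tableaux}, which is perfectly compatible with $\Shape(\s)\rhd\lam^*$. The actual content of Theorem~\ref{mainthm2} over Theorem~\ref{Geckresult1} is that when $\Shape(\s)\rhd\lam^*$ one still has the tableau-level dominances $\s\unrhd\u_w$ and $\t\unrhd\v_w$, not that those terms vanish.

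Second, and more seriously, your ``key step'' --- that $r_{w,y}\neq 0$ in the expansion $\fm_{\u_w,\v_w}=v^{\ell(w_{\lam^*,0})}C'_w+\sum r_{w,y}C'_y$ forces $y\leq_L w$ and $y\leq_R w$ --- is not established by Geck and is not a formal consequence of cellular-algebra bookkeeping. Writing $\fm_{\u_w,\v_w}=v^{\ell(w_{\lam^*,0})}T_{d(\u_w)^{-1}}C'_{w_{\lam^*,0}}T_{d(\v_w)}$ and expanding, left multiplication yields $C'_z$ with $z\leq_L w_{\lam^*,0}$, and subsequent right multiplication gives $C'_y$ with $y\leq_R z$; this only produces $y\leq_{LR} w$, not the one-sided relations $y\leq_L w$ and $y\leq_R w$ that you need to feed into Theorem~\ref{mainthm1}. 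Geck's \cite[Corollary 5.11]{Gec} determines the leading sign $\eta'_w$; it does not supply this one-sided compatibility. In fact your ``key step'' is \emph{stronger} than Theorem~\ref{mainthm2} (since the converse of Theorem~\ref{mainthm1} fails), so you are trying to prove the theorem via a harder intermediate statement.

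There is also a structural issue: you invoke Theorem~\ref{mainthm1}, but in the paper Theorem~\ref{mainthm1} is itself deduced from Theorem~\ref{mainthm3}. Once Theorem~\ref{mainthm3} is available, the paper's proof of Theorem~\ref{mainthm2} is a one-liner: combine Theorem~\ref{mainthm3} with Lemma~\ref{hm2} (the unitriangular transition between the Murphy and seminormal bases with respect to $\Gdom$), and use that both $\{C'_w\}$ and $\{\fm_{\s\t}\}$ are $\A$-bases to see the coefficients land in $\Z[v,v^{-1}]$. Your route, even if the key step could be repaired, would go Theorem~\ref{mainthm3} $\Rightarrow$ Theorem~\ref{mainthm1} $\Rightarrow$ (key step) $\Rightarrow$ Theorem~\ref{mainthm2}, which is strictly longer than the paper's Theorem~\ref{mainthm3} $+$ Lemma~\ref{hm2} $\Rightarrow$ Theorem~\ref{mainthm2}.
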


Note that $\HH_v(\Sym_n)$ is naturally embedded into $\HH_{\Q(v)}(\Sym_n)$ via $h\mapsto 1\otimes h$. Henceforth, we identify $\HH_v(\Sym_n)$ with its image via this embedding.

\begin{thm}\label{mainthm3} Let $\lam\vdash n$. For each $w\in\Sym_n$ with $\RS(w)\in\Std(\lam)\times\Std(\lam)$, we have that $$\begin{aligned}
C'_w&= v^{-\ell(w_{\lam^*,0})}\mff_{\u_w,\v_w}+\sum_{\substack{(\s,\t)\in\Std^2(n)\\ (\s,\t)\Gdom (\u_w,\v_w)}}r'_{w,\s,\t}\mff_{\s\t},\\
\mff_{\u_w,\v_w}&= v^{\ell(w_{\lam^*,0})}C'_w+\sum_{\substack{y\in\Sym_n\\ (\u_y,\v_y)\Gdom (\u_w,\v_w)}}r'_{w,y}C'_y,
\end{aligned}
$$
where $r'_{w,\s,\t},r'_{w,y}\in\Q(v)$ for each triple $(w,\s,\t)$ and each pair $(w,y)$.
\end{thm}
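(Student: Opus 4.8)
The plan is to deduce Theorem \ref{mainthm3} from Theorem \ref{mainthm2} by expressing the Murphy basis in terms of the seminormal basis with an appropriate triangularity, and then composing the two change-of-basis matrices. Recall that the seminormal basis $\{\mff_{\s\t}\}$ is the "diagonalization" of the Murphy basis obtained by applying the Jucys--Murphy idempotents: there is a well-known unitriangular (with respect to the dominance order on pairs of tableaux) relation of the shape
\begin{equation*}
\mff_{\s\t} = \fm_{\s\t} + \sum_{(\a,\b)\rhd(\s,\t)} c_{\a\b}\,\fm_{\a\b},
\qquad
\fm_{\s\t} = \mff_{\s\t} + \sum_{(\a,\b)\rhd(\s,\t)} c'_{\a\b}\,\mff_{\a\b},
\end{equation*}
with coefficients in $\Q(v)$, where the sum ranges over pairs with strictly larger shape or equal shape and strictly dominating tableaux (this is standard; see Murphy's original work and Mathas's treatment via the Gram matrix of the Specht module, and it follows from the fact that both are cellular bases for the same cell structure with the seminormal form diagonalizing the bilinear form). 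In particular this relation is triangular for the order $\unrhd$ on $\Std^2(n)$; note however it is \emph{not} obviously triangular for the stronger order $\Gedom$, since passing to a strictly larger shape is permitted but a larger shape need not be $\Gedom$-comparable at the tableau level.

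The first step is therefore to establish the correct $\Gedom$-triangularity. The key observation is that when one restricts attention to the pairs $(\s,\t)$ of a \emph{fixed} shape $\lam^*$ — which is exactly the setting in which $C'_w$ and $\fm_{\u_w,\v_w}$ interact in Theorem \ref{mainthm2} — the two orders $\unrhd$ and $\Gedom$ behave compatibly, because within a single shape $(\s,\t)\unrhd(\u,\v)$ literally means $\s\unrhd\u$ and $\t\unrhd\v$, i.e.\ $(\s,\t)\Gedom(\u,\v)$. So I would argue as follows: starting from the second displayed formula of Theorem \ref{mainthm2}, substitute $\fm_{\u_w,\v_w}$ in terms of the seminormal basis. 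The leading term $v^{-\ell(w_{\lam^*,0})}\fm_{\u_w,\v_w}$ becomes $v^{-\ell(w_{\lam^*,0})}\mff_{\u_w,\v_w}$ plus a $\Q(v)$-combination of $\mff_{\s\t}$ with $(\s,\t)$ of the same shape $\lam^*$ and $(\s,\t)\rhd(\u_w,\v_w)$, hence $(\s,\t)\Gdom(\u_w,\v_w)$; the error terms $r_{w,\s,\t}\fm_{\s\t}$ with $(\s,\t)\Gdom(\u_w,\v_w)$ (necessarily of shape $\geq\lam^*$ in dominance, with tableau-level domination when the shape equals $\lam^*$) expand into $\mff$'s indexed by pairs that are $\unrhd$ them, and I need all such pairs to be $\Gdom(\u_w,\v_w)$. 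This requires a transitivity-type lemma for $\Gedom$ combined with the $\unrhd$-triangularity, using that $\Gedom$ on $\Std^2(n)$ forces the shape inequality $\Shape(\s)\unrhd\Shape(\u)$ (already noted in the excerpt after the definition). The cleanest route is to prove: if $(\a,\b)\unrhd(\s,\t)$ and $(\s,\t)\Gedom(\u,\v)$ then $(\a,\b)\Gedom(\u,\v)$. For the second formula of Theorem \ref{mainthm3} (expressing $\mff_{\u_w,\v_w}$ in terms of $C'_w$), I invert: write $\mff_{\u_w,\v_w}$ in terms of $\fm$'s, then use the second formula of Theorem \ref{mainthm2} reading it as an expansion of each $\fm$ in terms of $C'$'s indexed by $y$ with $(\u_y,\v_y)$ dominating, and again repackage the orders. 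Both directions also follow formally once one knows the transition matrix between $\{\mff_{\u_w,\v_w}\}_w$ and $\{C'_w\}_w$ is $\Gdom$-unitriangular, since a unitriangular matrix over $\Q(v)$ has a unitriangular inverse with respect to the same order.

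The main obstacle is precisely the mismatch between the two partial orders in the intermediate step: the classical Murphy-to-seminormal transition is naturally unitriangular only for $\unrhd$ (it genuinely moves mass to strictly larger shapes), whereas Theorem \ref{mainthm3} demands $\Gdom$-triangularity. Resolving this hinges on showing that the only $\mff_{\s\t}$ that can actually appear — after the full composition with Geck's result — are those forced into $\Gdom$-position by the combinatorics of the Robinson--Schensted correspondence, i.e.\ that the pairs $(\s,\t)$ occurring already satisfy the tableau-level domination and not merely the shape domination. I expect this to reduce to the lemma that $\Gedom$ is "closed upward" under $\unrhd$ as stated above, together with careful bookkeeping that the leading terms match up to the stated power of $v$ (which is inherited verbatim from Theorem \ref{mainthm2}). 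Once that lemma is in hand, Theorem \ref{mainthm3} is a short formal consequence of Theorem \ref{mainthm2} and the $\unrhd$-unitriangular change of basis between the Murphy and seminormal bases.
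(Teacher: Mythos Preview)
Your proposal has a fundamental circularity problem and a false lemma.

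\medskip
\textbf{Circularity.} You propose to deduce Theorem~\ref{mainthm3} from Theorem~\ref{mainthm2}, but in the paper the logical dependence runs the other way: Theorem~\ref{mainthm2} is obtained as an immediate corollary of Theorem~\ref{mainthm3} together with Lemma~\ref{hm2}. Geck's result (Theorem~\ref{Geckresult1}) gives only the weaker $\rhd$-triangularity between $\{C'_w\}$ and the Murphy basis, not the stronger $\Gdom$-triangularity asserted in Theorem~\ref{mainthm2}; there is no independent proof of Theorem~\ref{mainthm2} available to you. So the very input you want to start from is what the paper extracts \emph{after} proving Theorem~\ref{mainthm3}.

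\medskip
\textbf{The Murphy--seminormal transition.} You assert that the transition between $\{\fm_{\s\t}\}$ and $\{\mff_{\s\t}\}$ is unitriangular only for $\unrhd$ and ``not obviously triangular for the stronger order $\Gedom$''. In fact Lemma~\ref{hm2} already records the $\Gdom$-unitriangularity of this transition. So your ``main obstacle'' evaporates --- but only once Theorem~\ref{mainthm2} is in hand, which brings you back to the circularity above. Conversely, if you only had Geck's $\rhd$-triangular result and Lemma~\ref{hm2}, composing them yields exactly the weak Lemma~\ref{Cseminormal}, not Theorem~\ref{mainthm3}.

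\medskip
\textbf{The proposed transitivity lemma is false.} You suggest proving: if $(\a,\b)\unrhd(\s,\t)$ and $(\s,\t)\Gedom(\u,\v)$ then $(\a,\b)\Gedom(\u,\v)$. Taking $(\u,\v)=(\s,\t)$ this would say $\unrhd$ implies $\Gedom$, i.e.\ the two orders coincide --- which they do not. Concretely, for $n=5$ let $\a\in\Std(3,1,1)$ have first column $1,2,3$ and first row $1,4,5$, and let $\s=\t^{(2,2,1)}$. Then $\Shape(\a)=(3,1,1)\rhd(2,2,1)=\Shape(\s)$, so $(\a,\a)\rhd(\s,\s)$; but $\Shape(\a\!\downarrow_3)=(1^3)\not\unrhd(2,1)=\Shape(\s\!\downarrow_3)$, so $\a\not\unrhd\s$ and hence $(\a,\a)\not\Gedom(\s,\s)$.

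\medskip
\textbf{What the paper actually does.} The paper proves Theorem~\ref{mainthm3} directly, starting only from the weak $\rhd$-version (Lemma~\ref{Cseminormal}). The argument is by induction on $\ell(w)$: assuming some offending $(\s,\t)$ with $r'_{w,\s,\t}\neq 0$ and $(\s,\t)\not\Gdom(\u_w,\v_w)$, one picks $m<n$ witnessing, say, $\Shape(\t\!\downarrow_m)\not\unrhd\Shape(\v_w\!\downarrow_m)$, and studies the right $\HH_{\Q(v)}(\Sym_m)$-module $C'_w\HH_{\Q(v)}(\Sym_m)$. On one hand the seminormal expansion forces $S(\mu)$ with $\mu\not\unrhd\Shape(\v_w\!\downarrow_m)$ to appear (Lemma~\ref{resm}); on the other hand, Geck's parabolic decomposition (Lemma~\ref{dmu1}) together with Corollary~\ref{keycor1} and the RSK restriction identity of Proposition~\ref{prop:wTow1-n-m} shows that only $S(\mu)$ with $\mu\unrhd\Shape(\v_w\!\downarrow_m)$ can appear. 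This contradiction is the actual engine; the passage to Theorem~\ref{mainthm2} is then the easy step via Lemma~\ref{hm2}, in the direction opposite to what you propose.
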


Kazhdan and Lusztig have also introduced another $\Z[v,v^{-1}]$-basis $\{C_w\mid w\in\Sym_n\}$ which will be called twisted KL basis. Let $\{\mathfrak{g}_{\s\t}\mid \s,\t\in\Std(\lam),\lam\vdash n\}$ be the dual seminormal basis of $\HH_{\Q(v)}(\Sym_n)$ corresponding to the dual Murphy basis $\{\mathfrak{n}_{\s\t}\mid \s,\t\in\Std(\lam),\lam\vdash n\}$ of $\HH_v(\Sym_n)$. In a parallel to the above two theorems, we also obtain the corresponding results (Theorems \ref{mainthm4}, \ref{mainthm5}) for the triangular transition matrices relations between the twisted Kazhdan-Lusztig basis $\{C_w\mid w\in\Sym_n\}$ and the dual seminormal basis $\{\mathfrak{g}_{\s\t}\mid \s,\t\in\Std(\lam),\lam\vdash n\}$ and the dual Murphy basis $\{\mathfrak{n}_{\s\t}\mid \s,\t\in\Std(\lam),\lam\vdash n\}$.

\medskip
The paper is organised as follows. In Section 2 we recall some basic knowledge, notations and definitions on the symmetric groups and its associated Iwahori-Hecke algebra. In particular, we give the definitions of KL basis, twisted KL basis, Murphy basis and seminormal basis. In Section 3 we give the proof of our first main result Theorem \ref{mainthm1} and the second main result Theorems \ref{mainthm2}, \ref{mainthm3} on the relations between the KL basis and Murphy basis and seminormal basis. In Section 4 we first recall the definition of dual Murphy basis and dual seminormal basis of the Hecke algebra $\HH_{\Q(v)}(\Sym_n)$. Then we give the third main result Theorems \ref{mainthm4}, \ref{mainthm5} on the relations between the twisted KL basis and the dual Murphy basis and the dual seminormal basis.

\bigskip\bigskip
\centerline{Acknowledgements}
\bigskip

The research was supported by
the National Natural Science Foundation of China (No. 11901030),
the Natural Science Foundation of Beijing Municipality (No. 1204034)
and
Beijing Institute of Technology Research Fund Program for Young Scholars.
\bigskip

\bigskip
\section{Preliminary}

In this section, we shall give some preliminary definitions and results on the symmetric groups and its associated Iwahori-Hecke algebra.

Let $\Sym_n$ be the symmetric group on $\{1,2,\cdots,n\}$. For each $1\leq i<n$, we set $s_i:=(i,i+1)$. Let $v$ be an indeterminate over $\Z$. Set $\A:=\Z[v,v^{-1}]$. By definition, the Iwahori-Hecke algebra $\HH_v(\Sym_n)$ associated to $\Sym_n$ is the unital associative $\A$-algebra with generators $T_1,\cdots,T_{n-1}$ and defining relations
$$
\begin{aligned}
 (T_r-v)(T_r+v^{-1})&=0,\quad\forall\,1\leq r<n,\\
 T_i T_{i+1} T_i&=T_{i+1} T_i T_{i+1},\quad\forall\,1\leq i<n-1,\\
 T_j T_k&=T_k T_j, \quad\forall\, 1\leq j<k-1\leq n-2 .
\end{aligned}
$$
A word $w=s_{i_{1}}s_{i_{2}}\ldots s_{i_{k}}$ for $w\in \Sym_{n}$ is called a reduced expression of $w$ if $k$ is minimal; in this case we say $w$ has length $k$ and we write $\ell(w)=k$. Given a reduced expression $s_{i_{1}}\cdots s_{i_{k}}$ of $w\in \Sym_{n}$, we define $T_{w}=T_{i_{1}}\cdots T_{i_{k}}$, which is independent of the choice of the reduced expression of $w$ because of the braid relations holds in $\HH_v(\Sym_n)$. For any field $K$ which is an $\A$-algebra, we define $\HH_K(\Sym_n):=K\otimes_{\A}\HH_v(\Sym_n)$.

Let $m$ be a positive integer. A partition of $m$ is a weakly decreasing sequence $\lam=(\lam_{1},\lam_{2},\cdots)$ of non-negative integers such that $|\lam|:=\Sigma_{i\geq 1}\lam_{i}=m$. In this case we write $\lam\vdash m$. We use $\P_m$ to denote the set of partitions of $m$. For any $\lam=(\lam_{1},\lam_{2},\ldots)\in\P_m$, we define $\lam^*=(\lam^*_{1},\lam^*_{2},\ldots)$, where for each $i$, $\lam^*_{i}:=\#\{j\mid \lam_{j} \geq i\}$. Then $\lam^*\in\P_m$ and is called the conjugate of $\lam$. We define the Young diagram of $\lam$ to be $[\lam]:=\{(i,j)\mid 1\leq j\leq \lam_{i}\}$, and each $(i,j)\in [\lam]$ is called a box of $\lam$. A $\lam$-tableau $\t$ is a bijective map $\t\colon [\lam]\to \{1,2,\cdots,m\}$ . We usually call the image of a box $(x, y)\in[\lam]$ under $\t$ the entry of the box $(x,y)$ in $\t$. If $\t$ is a $\lam$-tableau, then we set $\Shape(\t):=\lam$, and define $\t^*\in\Std(\lam^*)$ by $\t^*(i,j):=\t(j,i)$ for any $(i,j)\in[\lam^*]$. We call $\t^*$ the conjugate of $\t$. If the $\lam$-tableau $\t$ satisfies that $\t(i,j)\leq\t(i,b)$ for any $j\leq b$ and any $i$, then we say $\t$ is row standard. A $\lam$-tableau $\t$ is called standard if both $\t$ and $\t^*$ are row standard. We use $\Std(\lam)$ to denote the set of standard $\lam$-tableaux. For each $\lam$-tableau $\t$ and $1\leq k\leq m$, we use $\t\!\downarrow_k$ to denote the subtableau of $\t$ which contains the integer $1,2,\cdots,k$. Set $\Std(m):=\sqcup_{\lam\vdash m}\Std(\lam)$, $\Std^2(m):=\sqcup_{\lam\vdash m}\Std^2(\lam)$, where $\Std^2(\lam):=\Std(\lam)\times\Std(\lam)$ for each $\lam\in\P_m$.

Let $\lam\in\P_n$. Let $\t^\lam$ be the initial standard $\lam$-tableaux with the numbers $1,2,\cdots,n$ entered in order along the rows of $[\lam]$. We define $\t_{\lam}:=(\t^{\lam^*})^*$. In particular, $\t_\lam$ is the standard $\lam$-tableaux in which the numbers $1,2,\cdots,n$ entered in order along the columns of $[\lam]$.

For any $\lam,\mu\in\P_n$, we write $\lam\unrhd\mu$ if for all $i\geq 1$,
$$\sum_{j=1}^{i}\lam_{j}\geq \sum_{j=1}^{i}\mu_{j}.$$
Clearly $\P_n$ is a poset with respect to the partial order ``$\unrhd$''. We call ``$\unrhd$'' the dominance order on $\P_n$. If $\lam\unrhd\mu$ and $\lam\neq\mu$, then we write $\lam\rhd\mu$.

Let $\lam,\mu\in\P_n$. Let $\s\in\Std(\lam), \t\in\Std(\mu)$. We write $\s\unrhd\t$ if for any $1\leq k\leq n$, $${\rm{Shape}}(\s\!\downarrow_k)\unrhd{\rm{Shape}}(\t\!\downarrow_k). $$ If $\s\unrhd\t$ and $\s\neq\t$ then we write $\s\rhd\t$. Clearly, ``$\unrhd$'' is also a partial order which will be called the dominance order on $\Std(n)$, and $\t^\lam\unrhd\s\unrhd\t_\lam$ for any $\s\in\Std(\lam)$.

For each $\lambda=(\lambda_1, \lambda_2,\dots,\lambda_k) \in \P_n$, we define \begin{align*}
I_{\lambda}:=\{s_r \mid 1\leq r<n, r\neq\lambda_1+\lambda_2+\cdots+\lambda_i, \forall\,1\leq i<k\} .
\end{align*}
and denote by $\Sym_\lam$ the standard parabolic subgroup of $\mathfrak{S}_n$ generated by $I_\lam$. Then $\Sym_\lam\cong\mathfrak{S}_{\lambda_1}\times \mathfrak{S}_{\lambda_2}\times \cdots \times \mathfrak{S}_{\lambda_k}$. We allow $\Sym_n$ to act on $\{1,2,\cdots,n\}$ from both the left-hand side and the right-hand side. They are related to each other via $(k)w=w^{-1}(k)$ for any $1\leq k\leq n$ and $w\in\Sym_n$. Similarly, the symmetric group $\Sym_n$ can act on the set of standard tableaux from both the left-hand side and the right-hand side. It is clear that $\Sym_\lam$ is nothing but the row stabilizer of $\t^\lam$ in $\Sym_n$. We set $\mathcal{D}_\lam:=\{d\in\Sym_n\mid \text{$\t^\lam d$ is row standard}\}$. Then $\mathcal{D}_\lam$ is the set of minimal length right coset representatives of $\Sym_\lam$ in $\Sym_n$ (\cite{DJ1}, \cite{DJ2}). In particular, for any $w\in\Sym_n$, there are unique $d\in\mathcal{D}_\lam$ and $z\in\Sym_\lam$ such that $w=zd$ with $\ell(w)=\ell(z)+\ell(d)$. We use $w_{\lam,0}$ to denote the unique longest element in $\Sym_\lam$. For each $\t\in\Std(\lam)$, let $d(\t)\in\Sym_n$ be the unique element in $\Sym_n$ such that $\t^\lam d(\t)=\t$.

Let $\ast$ be the anti-isomorphism of $\HH_v(\Sym_n)$ which is defined on generators by $T_i^\ast=T_i$ for any $1\leq i<n$.

\begin{dfn}[{\cite{Mur}}] Let $\lam\in\P_n$ and $\s,\t\in\Std(\lam)$. We define $$
\fm_{\s\t}:=T_{d(\s)}^\ast\Bigl(\sum_{w\in\Sym_\lam}v^{\ell(w)}T_w\Bigr)T_{d(\t)}.
$$
\end{dfn}

\begin{rem} The readers should identify $d(\s)$ in our notation with $d(\s)^{-1}$ in the notation of \cite{Gec}. In particular, the above definition of $\fm_{\s\t}$ coincides with \cite[Theorem 4.1]{Gec}.
\end{rem}

\begin{thm}[\cite{Mur}] The set $\{\fm_{\s\t}\mid \s,\t\in\Std(\lam),\lam\in\P_n\}$ forms an $\A$-basis of $\HH_{v}(\Sym_n)$.
\end{thm}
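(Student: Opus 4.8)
The plan is to separate the statement into an elementary cardinality count and a genuine spanning argument, the latter carrying all the weight. Throughout put $x_\lambda:=\sum_{w\in\Sym_\lambda}v^{\ell(w)}T_w$, so that, since $\ast$ is an anti-automorphism with $T_w^\ast=T_{w^{-1}}$, we have $\fm_{\s\t}=T_{d(\s)^{-1}}\,x_\lambda\,T_{d(\t)}$ for $\s,\t\in\Std(\lambda)$. The Robinson--Schensted--Knuth correspondence recalled in the introduction is a bijection between $\Sym_n$ and $\bigsqcup_{\lambda\vdash n}\Std(\lambda)\times\Std(\lambda)$, so the indexing set $\{(\s,\t)\mid\s,\t\in\Std(\lambda),\ \lambda\vdash n\}$ has exactly $n!=\rank_\A\HH_v(\Sym_n)$ elements. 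A generating set of cardinality equal to the rank of a finitely generated free module over a commutative ring is automatically a basis (a surjection $\A^N\twoheadrightarrow\A^N$ is an isomorphism), so it is enough to prove that the $\fm_{\s\t}$ span $\HH_v(\Sym_n)$ over $\A$; equivalently, that $T_w$ lies in their $\A$-span for every $w\in\Sym_n$.

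The first ingredient for the spanning step is a ``leading term'' lemma. Since $d(\s),d(\t)\in\mathcal{D}_\lambda$ are minimal-length $\Sym_\lambda$-coset representatives, $\ell(d(\s)^{-1}z)=\ell(d(\s))+\ell(z)$ and $\ell(z\,d(\t))=\ell(z)+\ell(d(\t))$ for all $z\in\Sym_\lambda$; hence $T_{d(\s)^{-1}}x_\lambda=\sum_{z\in\Sym_\lambda}v^{\ell(z)}T_{d(\s)^{-1}z}$, and multiplying on the right by $T_{d(\t)}$ --- using that $w_{\lambda,0}$ is the unique longest element of $\Sym_\lambda$, so every other term contributes strictly smaller length, together with the length identity $\ell(d(\s)^{-1}w_{\lambda,0}d(\t))=\ell(d(\s))+\ell(w_{\lambda,0})+\ell(d(\t))$ --- yields
$$
\fm_{\s\t}=v^{\ell(w_{\lambda,0})}\,T_{d(\s)^{-1}w_{\lambda,0}d(\t)}+\bigl(\A\text{-combination of }T_w\text{ with }\ell(w)<\ell\bigl(d(\s)^{-1}w_{\lambda,0}d(\t)\bigr)\bigr).
$$

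If the map $(\s,\t)\mapsto d(\s)^{-1}w_{\lambda,0}d(\t)$ were a bijection onto $\Sym_n$, a straightforward unfold-by-length induction would now finish the spanning step. The main obstacle is that this map is in general neither injective nor surjective: already in $\Sym_3$, both the unique pair of shape $(3)$ and the pair $(\t_{(2,1)},\t_{(2,1)})$ of shape $(2,1)$ have leading element the longest element $s_1s_2s_1$, while $s_2$ is the leading element of no pair. The remedy is to bring in the dominance order on shapes as a secondary invariant. Concretely, set $\HH^{\unrhd\mu}:=\sum_{\lambda\unrhd\mu}\sum_{\s,\t\in\Std(\lambda)}\A\fm_{\s\t}$ (and $\HH^{\rhd\mu}$ analogously) and prove, by the Garnir / row--column straightening relations --- that is, by the combinatorics of the $\Sym_n$-action on $\lambda$-tableaux --- that each $\HH^{\unrhd\mu}$ is a two-sided ideal, equivalently that $T_i\fm_{\s\t}$ and $\fm_{\s\t}T_i$ lie in $\HH^{\unrhd\Shape(\s)}$ for every generator $T_i$. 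One then identifies $\HH^{\unrhd\lambda}/\HH^{\rhd\lambda}$ with a direct sum of $\#\Std(\lambda)$ copies of a permutation/Specht-type module of $\A$-rank $\#\Std(\lambda)$, with the images of the $\fm_{\s\t}$ of shape $\lambda$ as a free generating set of size $(\#\Std(\lambda))^2$; summing over $\lambda$ and comparing $\sum_\lambda(\#\Std(\lambda))^2=n!$ with $\rank_\A\HH_v(\Sym_n)$ forces the $\fm_{\s\t}$ to span. This straightening/ideal statement is the one genuinely nontrivial point, and I expect it to be the main obstacle: it has no soft proof and rests squarely on the tableau combinatorics (equivalently, it is what makes the transition matrix between $\{\fm_{\s\t}\}$ and $\{T_w\}$ have determinant a unit of $\A$). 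As a cross-check on the independence half one may also observe that over $\Q(v)$ the split semisimple algebra $\HH_{\Q(v)}(\Sym_n)$ has irreducibles indexed by $\lambda\vdash n$ of dimensions $\#\Std(\lambda)$, and the $\fm_{\s\t}$ are triangular against a system of matrix units with respect to the same shape-dominance order, hence $\Q(v)$-linearly independent and a fortiori $\A$-linearly independent; this, combined with the spanning statement, gives the asserted $\A$-basis.
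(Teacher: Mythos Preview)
The paper does not give its own proof of this theorem; it is quoted verbatim from Murphy's 1995 paper and treated as a black box, so there is nothing in the paper to compare your argument against.

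That said, your outline is an accurate sketch of how Murphy's proof actually proceeds: the RSK cardinality count, the leading-term identity $\fm_{\s\t}=v^{\ell(w_{\lambda,0})}T_{d(\s)^{-1}w_{\lambda,0}d(\t)}+(\text{shorter terms})$, the observation that this alone is not enough because the leading-term map is not bijective, and the use of the dominance filtration $\HH^{\unrhd\lambda}$ together with Garnir straightening to show that the images of the $\fm_{\s\t}$ of fixed shape $\lambda$ span (and are free of the right rank in) the successive quotients. You are also right that the straightening step carries all the weight and has no soft substitute. As you yourself flag, your write-up stops short of executing that step, so what you have is a correct roadmap rather than a complete proof; the missing content is exactly Murphy's Lemma on expressing $\fm_{\s\t}T_i$ (and symmetrically $T_i\fm_{\s\t}$) as an $\A$-combination of $\fm_{\u\v}$ with $\Shape(\u)\unrhd\lambda$, which is the cellularity of the Murphy basis.

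One small wording caution: in your quotient description you assert that the images of the $\fm_{\s\t}$ of shape $\lambda$ form a \emph{free} generating set of $\HH^{\unrhd\lambda}/\HH^{\rhd\lambda}$. Freeness is not automatic from spanning plus a rank count on each layer separately; it follows only after you know the global spanning statement and compare total ranks (or, alternatively, after you check linear independence by specialisation to $\Q(v)$ as in your cross-check). Phrased as you have it, that line risks looking circular.
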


We call $\{\fm_{\s\t}\mid \s,\t\in\Std(\lam),\lam\in\P_n\}$ the {\bf Murphy basis} of $\HH_{v}(\Sym_n)$. It is cellular in the sense of Graham and Lehrer \cite{GL}.

Let $a\mapsto\overline{a}$ be the unique ring involution of $\A$ defined by $\overline{v}=v^{-1}$. It extends to a bar involution $h\mapsto\overline{h}$ of $\HH_v(\Sym_n)$ defined by $$
\overline{\sum_{w\in\Sym_n}a_wT_w}:=\sum_{w\in \Sym_n}\overline{a_w}T_{w^{-1}}^{-1} .
$$
Let ``$>$'' be the Bruhat order defined on the symmetric group $\Sym_n$.

\begin{thm}[{\cite{KL}}] For any $w\in W$, there is a unique $C'_w\in\HH_v(\Sym_n)$ such that $\overline{C'_w}=C'_w$ and $$
C'_w=T_w+\sum_{w>y\in\Sym_n}v^{\ell(y)-\ell(w)}P_{y,w}(v^2)T_y ,
$$
where $P_{y,w}(v^2)\in\A$ is a polynomial on $q:=v^2$ of degree $\leq (\ell(w)-\ell(y)-1)/2$ for $y<w$. Moreover, $\{C'_w\mid w\in\Sym_n\}$ forms an $\A$-basis of $\HH_v(\Sym_n)$.
\end{thm}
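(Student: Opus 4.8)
The final statement is the Kazhdan-Lusztig existence and uniqueness theorem for $\{C'_w\}$, and the plan is to carry out the original induction of \cite{KL} in the present normalization (here $C'_s=T_s+v^{-1}$ and $\overline{T_s}=T_s^{-1}=T_s+v^{-1}-v$). A preliminary remark: the condition "$v^{\ell(y)-\ell(w)}P_{y,w}(v^2)$ with $P_{y,w}$ a polynomial in $q$ of degree $\le(\ell(w)-\ell(y)-1)/2$" on the coefficient of $T_y$ in $C'_w$ is precisely the requirement that this coefficient lie in $v^{-1}\Z[v^{-1}]$, with all of its exponents $\equiv\ell(w)-\ell(y)\bmod 2$ and bounded below by $\ell(y)-\ell(w)$.

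For uniqueness, I would first record the elementary unitriangularity $\overline{T_z}=T_z+\sum_{y<z}r_{y,z}T_y$ with $r_{y,z}\in\A$, which follows by expanding $\overline{T_z}=(T_{s_{i_1}}+v^{-1}-v)\cdots(T_{s_{i_k}}+v^{-1}-v)$ over a reduced word for $z$: the leading term is $T_z$ and every other term is indexed by a proper subword, hence by some $y<z$. Now if $C$ and $C''$ both satisfy the two conditions, their difference $D=C-C''$ is bar-invariant, is supported on $\{y<w\}$, and has every coefficient $a_y$ in $v^{-1}\Z[v^{-1}]$. If $D\ne 0$, choose $y$ maximal in the Bruhat order with $a_y\ne 0$; comparing the coefficient of $T_y$ on the two sides of $D=\overline{D}$ and using the displayed unitriangularity together with the maximality of $y$ forces $a_y=\overline{a_y}$, which is impossible because $v^{-1}\Z[v^{-1}]\cap v\Z[v]=0$. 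Hence $D=0$.

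For existence, I would induct on $\ell(w)$, with $C'_e:=T_e$. When $\ell(w)>0$, pick $i$ with $w':=s_iw<w$ and put $s:=s_i$, so $C'_{w'}$ exists by induction and $sw'>w'$. Writing $C'_{w'}=\sum_{y\le w'}h_yT_y$ with $h_{w'}=1$ and $h_y\in v^{-1}\Z[v^{-1}]$ for $y<w'$, one forms the bar-invariant element $C'_sC'_{w'}=(T_s+v^{-1})C'_{w'}$ and, using $T_sT_y=T_{sy}$ when $sy>y$ and $T_sT_y=T_{sy}+(v-v^{-1})T_y$ when $sy<y$, computes that its $T_z$-coefficient is $h_{sz}+v^{-1}h_z$ when $sz>z$ and $h_{sz}+vh_z$ when $sz<z$ (with $h_x:=0$ for $x\not\le w'$). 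The lifting property of the Bruhat order ($y\le w'$ and $sw'>w'$ give $sy\le sw'=w$) shows $C'_sC'_{w'}$ is supported on $\{z\le w\}$; a direct inspection then yields $T_w$-coefficient $1$, every other coefficient in $\Z[v^{-1}]$, and a possibly nonzero constant term occurring precisely in the slots with $z<w'$ and $sz<z$, where it equals $\mu(z):=$ the coefficient of $v^{-1}$ in $h_z$. I would then set
$$
C'_w := C'_sC'_{w'}-\sum_{\substack{z<w'\\ sz<z}}\mu(z)\,C'_z,
$$
which is bar-invariant (the $\mu(z)$ lie in $\Z$, the $C'_z$ are bar-invariant by induction), still has $T_w$-coefficient $1$, and now has all lower coefficients in $v^{-1}\Z[v^{-1}]$; the parity of the exponents and their lower bound — hence the fact that $P_{y,w}$ is a polynomial in $q$ of the claimed degree — are propagated along the same induction. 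Finally, since every $C'_w$ equals $T_w$ plus an $\A$-linear combination of the $T_y$ with $y<w$, the transition matrix between $\{C'_w\}$ and $\{T_w\}$ is unitriangular for any total order refining the Bruhat order, hence invertible over $\A$, so $\{C'_w\mid w\in\Sym_n\}$ is an $\A$-basis of $\HH_v(\Sym_n)$.

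I expect the existence step to be the main obstacle: one must check that the only failure of $C'_sC'_{w'}$ to already satisfy the conditions of the theorem is the family of integer constant terms $\mu(z)$ lying exactly in the positions $z<w'$, $sz<z$, so that subtracting $\sum\mu(z)C'_z$ repairs all of them simultaneously without introducing any non-negative power of $v$ elsewhere or changing the $T_w$-coefficient. This requires the explicit left-multiplication-by-$T_s$ formula together with Bruhat-order bookkeeping (keeping every index arising in the product inside $\{z\le w\}$ and ruling out $sz=w'$ for $z\ne w$), and it is the point at which the parity and degree data for the $P_{y,w}$ must be carried along.
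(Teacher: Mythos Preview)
Your argument is correct and is precisely the original Kazhdan--Lusztig induction from \cite{KL}; the paper does not supply its own proof of this theorem but simply cites \cite{KL}, so your approach coincides with the intended reference. The one place where you wave your hands---that the parity and lower-bound constraints on the exponents (equivalently, that $P_{y,w}$ is a genuine polynomial in $q$ of the asserted degree) propagate through the induction and through the subtraction of $\sum\mu(z)C'_z$---is indeed routine, but if you want the proof to be self-contained you should spell out that in the case $sz<z$, $z\not\le w'$ one has $h_z=0$ so the coefficient is just $h_{sz}\in v^{-1}\Z[v^{-1}]$, and that the correction terms $\mu(z)C'_z$ only alter lower coefficients by elements of $v^{-1}\Z[v^{-1}]$ and hence cannot reintroduce constant terms.
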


For any $w\in\Sym_n$, we define $\eps_w:=(-1)^{\ell(w)}$. Let $j$ be the unique ring involution of $\HH_v(\Sym_n)$ given by $j\bigl(\sum_{w\in \Sym_n}a_wT_w\bigr)=\sum_{w\in\Sym_n}\overline{a_w}\eps_wT_w$. For any $w\in\Sym_n$, we define $C_w:=\eps_wj(C'_w)$. Then we have that $\overline{C_w}=C_w$, and $$
C_w=T_w+\sum_{w>y\in\Sym_n}\eps_y\eps_wv^{\ell(w)-\ell(y)}P_{y,w}(v^{-2})T_y .
$$
Moreover, $\{C_w\mid w\in\Sym_n\}$ forms an $\A$-basis of $\HH_v(\Sym_n)$. We call $\{C'_w\mid w\in\Sym_n\}$ the Kazhdan-Lusztig basis (or KL basis for short) of $\HH_v(\Sym_n)$, and $\{C_w\mid w\in\Sym_n\}$ the twisted Kazhdan-Lusztig basis (or twisted KL basis for short) of $\HH_v(\Sym_n)$.

\begin{lem}[{\cite{KL}}]\label{mucoeff} Let $w\in\Sym_n$. Then for any $1\leq k<n$, we have that $$\begin{aligned}
C'_{s_k}C'_w=\begin{cases} C'_{s_kw}+\sum_{\substack{z\in\Sym_n\\ sz<z<w}}\mu(z,w)C'_z, &\text{if $\ell(s_kw)=\ell(w)+1$;}\\
(v+v^{-1})C'_w, &\text{if $\ell(s_kw)=\ell(w)-1$.}
\end{cases}
\end{aligned}
$$
where $\mu(z,w)$ is the leading coefficient defined in \cite[Definition 1.2]{KL}.
\end{lem}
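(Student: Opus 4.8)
The plan is to follow the original argument of Kazhdan and Lusztig: prove both cases simultaneously by induction on $\ell(w)$, using the uniqueness characterisation of $C'_w$ recorded above. Write $s:=s_k$. Since $P_{e,s}(v^2)=1$, the defining formula gives $C'_s=T_s+v^{-1}$, and $\overline{C'_s}=C'_s$ follows at once from $\overline{T_s}=T_s^{-1}=T_s-(v-v^{-1})$, which is a restatement of $(T_s-v)(T_s+v^{-1})=0$. Using the quadratic relation a second time, one gets for every $y\in\Sym_n$
$$
C'_sT_y=\begin{cases}T_{sy}+v^{-1}T_y,&\text{if }sy>y,\\ T_{sy}+v\,T_y,&\text{if }sy<y,\end{cases}
$$
and, by taking $y=s$ and recombining, the relation $(C'_s)^2=(v+v^{-1})C'_s$.

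Suppose first that $\ell(sw)=\ell(w)-1$, so that $\ell(w)=\ell(sw)+1$. Then the induction hypothesis applied to $sw$ (of length $\ell(w)-1$) gives $C'_sC'_{sw}=C'_w+\sum_{sz<z<sw}\mu(z,sw)C'_z$, hence $C'_w=C'_sC'_{sw}-\sum_{sz<z<sw}\mu(z,sw)C'_z$. Multiplying on the left by $C'_s$ and using $(C'_s)^2=(v+v^{-1})C'_s$ on the first summand together with the induction hypothesis $C'_sC'_z=(v+v^{-1})C'_z$ on the rest (each such $z$ has $sz<z$ and $\ell(z)<\ell(w)$), all terms telescope and one is left with $C'_sC'_w=(v+v^{-1})C'_w$, as required.

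Now suppose $\ell(sw)=\ell(w)+1$, and set $h:=C'_sC'_w$, which is bar-invariant as a product of bar-invariant elements. Writing $C'_w=\sum_{y\le w}p_{y,w}T_y$ with $p_{w,w}=1$ and applying the displayed action of $C'_s$ to each $T_y$, one checks that every $T_x$ occurring in $h$ has $x\le sw$ and that $T_{sw}$ occurs with coefficient exactly $p_{w,w}=1$ (no cancellation is possible since $sw\not\le w$). As $C'_y=T_y+\sum_{x<y}(\ast)T_x$ for each $y$, it follows that $h=C'_{sw}+\sum_{y<sw}a_yC'_y$ for unique $a_y\in\A$; and since $h$ and all the $C'_y$ are bar-invariant, each $a_y$ is bar-invariant, i.e. $a_y\in\Z[v+v^{-1}]$.

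It remains to identify the $a_y$, which is the heart of the matter. I would do this by a secondary downward induction on $\ell(y)$, comparing the coefficient of $T_y$ on the two sides of $h=C'_{sw}+\sum_{z<sw}a_zC'_z$. On the left this coefficient is read off from the action formula for $C'_s$; on the right it equals $p_{y,sw}+a_y+\sum_{y<z<sw}a_zp_{y,z}$. Feeding in the degree bound $\deg_v p_{u,x}\le -1$ for $u<x$ (equivalent to $\deg_q P_{u,x}\le(\ell(x)-\ell(u)-1)/2$), together with $p_{u,u}=1$ and the inductively known integrality of the $a_z$ with $\ell(z)>\ell(y)$, one obtains $\deg_v a_y\le 0$; since a nonzero element of $\Z[v+v^{-1}]$ has non-negative $v$-degree, $a_y$ is forced to be an ordinary integer, and taking the $v^0$-coefficient of the comparison identity then shows that $a_y$ vanishes unless $y<w$ and $sy<y$, in which case $a_y=[v^{-1}]p_{y,w}=[q^{(\ell(w)-\ell(y)-1)/2}]P_{y,w}(q)=\mu(y,w)$. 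This last computation is precisely the Kazhdan--Lusztig recursion for the $P_{y,w}$ in disguise; it is the only non-formal step, everything else being bookkeeping with the quadratic relation and the uniqueness of $C'_w$.
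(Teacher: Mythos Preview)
The paper does not prove this lemma; it simply cites \cite{KL}. Your proposal supplies the classical Kazhdan--Lusztig argument, and it is correct as outlined: the descent case reduces to $(C'_s)^2=(v+v^{-1})C'_s$ via the inductively known ascent formula for $sw$, and in the ascent case the bar-invariance of $h=C'_sC'_w$ forces $a_y\in\Z[v+v^{-1}]$, after which the degree bound $\deg_v p_{u,x}\le -1$ for $u<x$ pins each $a_y$ down to the integer $\mu(y,w)$ (or $0$ when $sy>y$ or $y\not<w$). The one place where you are terse---extracting $a_y=[v^0](\text{coeff of }T_y\text{ in }h)$ and reading off $[v^0](v\,p_{y,w})=\mu(y,w)$ in the case $sy<y<w$---checks out once one writes $p_{y,w}=v^{\ell(y)-\ell(w)}P_{y,w}(v^2)$ and uses the degree bound on $P_{y,w}$. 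So your approach agrees with the source the paper is citing, just made explicit.
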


Let $\Q(v)$ be the rational functional field on $v$. It is well-known that the Hecke algebra $\HH_{\Q(v)}(\Sym_n)$ is split semisimple. For any $\lam\in\P_n$, $\t\in\Std(\lam)$ and any $1\leq k\leq n$, we define \begin{equation}
c_{\t}(k)=v^{2(j-i)},\quad  \text{if $k$ appears in row $i$ and column $j$ of $\t$.}
\end{equation}
We define $C(k):=\{c_\t(k) \mid  \t\in\Std(\lam), \lam\in\P_n\}$. In particular, for any $\lam,\mu\in\P_n$, $\s\in\Std(\lam), \t\in\Std(\mu)$, if $\s\neq\t$, then there exists $1\leq k\leq n$ such that $c_\s(k)-c_\t(k)\in \Q(v)^\times$, where $\Q(v)^\times$ denotes the set of invertible elements in $\Q(v)$.

For each integer $1\leq k\leq n$, we define $$
L_k:=T_{k-1}\cdots T_2T_1^2T_2\cdots T_{k-1} .
$$

\begin{dfn}[{\cite[Definition 2.4]{Ma}}]
\label{Ft}
Let $\lam\in\P_n$ and $\t\in\Std(\lam)$. We define
$$F_{\t}=\prod\limits^n\limits_{k=1}\prod\limits_{\substack{c\in C(k)\\c\neq c_{\t}(k)}}\frac{L_k-c}{c_{\t}(k)-c}.$$
\end{dfn}

For any $\lam\in\P_n$ and $\s,\t\in\Std(\lam)$, we define \begin{equation}\label{seminormal}
\mff_{\s\t}:=F_\s \fm_{\s\t}F_\t .\end{equation}

\begin{lem}[{\cite[Corollary 2.7, Theorems 2.11, Corollary 2.13]{Ma}}]
\label{obobn}
~\\
\begin{enumerate}
\item [1)] The set $\{\mff_{\s\t}\mid \s,\t \in \Std(\lam), \lam\in\P_n\}$ is a $\Q(v)$-basis of $\HH_{\Q(v)}(\Sym_n)$;
\item[2)] If $\s, \t, \u$ and $\v$ are standard tableaux, then $\mff_{\s\t}\mff_{\u\v}=\delta_{\t\u}\gamma_{\t}\mff_{\s\v}$, where $\gamma_\t\in\Q(v)^\times$;
\item[3)] For each $\lam\in\P_n$, we define $S(\lam):=f_{\t^\lam\t^\lam}\HH_{\Q(v)}(\Sym_n)$. Then $S(\lam)\cong\mff_{\s\t}\HH_{\Q(v)}(\Sym_n)$ for any $\s,\t\in\Std(\lam)$, and $\{S(\lam)\mid \lam\vdash n\}$ is a complete set of pairwise non-isomorphic simple right $\HH_{\Q(v)}(\Sym_n)$-modules; Similarly, we define $S'(\lam):=\HH_{\Q(v)}(\Sym_n)f_{\t^\lam\t^\lam}$. Then $S'(\lam)\cong\HH_{\Q(v)}(\Sym_n)\mff_{\s\t}$ for any $\s,\t\in\Std(\lam)$, and $\{S'(\lam)\mid \lam\vdash n\}$ is a complete set of pairwise non-isomorphic simple left $\HH_{\Q(v)}(\Sym_n)$-modules;
\item[4)] Let $\lam\in\P_n$ and $\s,\u\in\Std(\lam)$. For any integer $1\leq i<n$, if $\t:=\s(i,i+1)$ is standard then
	$$\mff_{\u\s}T_i=\begin{cases*}
		\frac{(v-v^{-1})c_\s(i+1)}{c_\s(i+1)-c_\s(i)}\mff_{\u\s}+\mff_{\u\t}, &\text{if $\t\triangleleft\s$},\\
\frac{(v-v^{-1})c_\s(i+1)}{c_\s(i+1)-c_\s(i)}\mff_{\u\s}+\frac{(vc_{\s}(i)-v^{-1}c_{\s}(i+1))(v^{-1}c_{\s}(i)-vc_{\s}(i+1))}
{(c_{\s}(i+1)-c_{\s}(i))^2}\mff_{\u\t}, &\text{if $\s\triangleleft\t$},
	\end{cases*}$$
If $\t:=\s(i,i+1)$ is not standard then
	$$\mff_{\u\s}T_i=\begin{cases*}
		v\mff_{\u\s}, &\text{if $i$ and $i+1$ are in the same row of $\s$},\\
		-v^{-1}\mff_{\u\s}, &\text{if $i$ and $i+1$ are in the same column of $\s$}.
	\end{cases*}$$
\end{enumerate}
Similar formulae hold for $T_i\mff_{\u\s}$.
\end{lem}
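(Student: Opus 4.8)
Since the statement collects several results of \cite{Ma}, the plan is to reconstruct the seminormal-form theory along those lines. Everything rests on two properties of the Jucys--Murphy elements $L_1,\dots,L_n$, which I would establish first. That the $L_k$ commute pairwise is an easy induction from the identity $L_{k+1}=T_kL_kT_k$, itself immediate from the definition $L_k=T_{k-1}\cdots T_1^2\cdots T_{k-1}$ and the braid relations. The substantive input is that the $L_k$ act triangularly on the Murphy basis of each cell (Specht) module: if $S^\lam$ denotes the cell module of $\HH_v(\Sym_n)$ for the Murphy basis and $\fm_\t$ the image of $\fm_{\t^\lam\t}$ in $S^\lam$, then $\fm_\t L_k=c_\t(k)\fm_\t+(\text{an }\A\text{-linear combination of the }\fm_\s\text{ with }\s\rhd\t)$. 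I would prove this by an induction on the dominance order on $\Std(\lam)$, starting from $\t=\t^\lam$, using the known action of the generators $T_i$ on the Murphy basis of $S^\lam$ together with $L_1=1$ and $L_{k+1}=T_kL_kT_k$. Combining this triangularity with the content-separation recorded just before Definition~\ref{Ft} --- that $\s\ne\t$ forces $c_\s(k)\ne c_\t(k)$ for some $k$ --- shows that, over $\Q(v)$, each $S(\lam)$ admits a basis of simultaneous eigenvectors for $L_1,\dots,L_n$ indexed by $\Std(\lam)$, with pairwise distinct content-vectors $(c_\t(1),\dots,c_\t(n))$.

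Given that foundation, the rest is the standard idempotent calculus inside the split semisimple algebra $\HH_{\Q(v)}(\Sym_n)$. Because the content-vectors separate standard tableaux, the element $F_\t$ of Definition~\ref{Ft} acts on $\bigoplus_{\mu\vdash n}S(\mu)$ as the projection onto the span of the eigenvectors whose content-vector equals that of $\t$, so $\{F_\t\mid\t\in\Std(n)\}$ is a complete set of pairwise orthogonal idempotents of $\HH_{\Q(v)}(\Sym_n)$; in particular $F_\s\,\HH_{\Q(v)}(\Sym_n)\,F_\v$ is one-dimensional when $\Shape(\s)=\Shape(\v)$ and zero otherwise. Setting $\mff_{\s\t}:=F_\s\fm_{\s\t}F_\t$, the triangular action of the $L_k$ and the non-vanishing of $F_\t\fm_{\t\t}F_\t$ give the unitriangular change of basis $\fm_{\s\t}=\mff_{\s\t}+\sum_{(\u,\v)\rhd(\s,\t)}r_{\u\v}\mff_{\u\v}$, which yields part (1). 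For part (2), orthogonality of the $F$'s forces $\mff_{\s\t}\mff_{\u\v}=0$ unless $\t=\u$, and when $\t=\u$ the product lies in the one-dimensional space $F_\s\,\HH_{\Q(v)}(\Sym_n)\,F_\v$ spanned by $\mff_{\s\v}$, so $\mff_{\s\t}\mff_{\u\v}=\gamma_\t\mff_{\s\v}$ for a scalar $\gamma_\t\in\Q(v)$ depending only on $\t$; identifying the $\mff$'s with scalar multiples of the matrix units of the block $M_{|\Std(\lam)|}(\Q(v))$ shows $\gamma_\t\neq0$. Part (3) is then obtained by rescaling $f_{\s\t}:=\gamma_\s^{-1}\mff_{\s\t}$ into an honest system of matrix units and reading off the Wedderburn decomposition $\HH_{\Q(v)}(\Sym_n)\cong\bigoplus_{\lam\vdash n}M_{|\Std(\lam)|}(\Q(v))$: the space $\mff_{\t^\lam\t^\lam}\HH_{\Q(v)}(\Sym_n)$ is one row of the $\lam$-block, hence a simple module isomorphic to $\mff_{\s\t}\HH_{\Q(v)}(\Sym_n)$ for any $\s,\t\in\Std(\lam)$, and the $S(\lam)$ are pairwise non-isomorphic and exhaust the simples (similarly on the left).

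For part (4) I would work inside the simple module $S(\Shape(\s))$. The key observation is that $T_i$ commutes with $L_j$ for $j\ne i,i+1$ and with both $L_i+L_{i+1}$ and $L_iL_{i+1}$ --- all consequences of $L_{i+1}=T_iL_iT_i$ together with the quadratic relation on $T_i$ --- so $T_i$ preserves the span of the two eigenvectors whose content-vectors agree away from positions $i,i+1$ and are interchanged there; when $\t:=\s(i,i+1)$ is standard this is precisely the span of $\mff_{\u\s}$ and $\mff_{\u\t}$. On that two-dimensional space $T_i$ is pinned down by its trace $v-v^{-1}$, its eigenvalues $v$ and $-v^{-1}$, and the prescribed $L_{i+1}$-eigenvalues $c_\s(i+1)$ and $c_\t(i+1)$, and solving this $2\times2$ linear-algebra problem produces exactly the coefficients displayed in part (4), with the extra factor placed in the off-diagonal entry according as $\t\triangleleft\s$ or $\s\triangleleft\t$. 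When $\s(i,i+1)$ is not standard, $i$ and $i+1$ occupy the same row or the same column of $\s$, the relevant subspace is a line, and $T_i$ acts there by its eigenvalue $v$ or $-v^{-1}$ respectively; the companion formulae for $T_i\mff_{\u\s}$ follow by the symmetric argument acting on the left, or by applying the anti-automorphism $\ast$. The one genuinely demanding step in this program is the triangular action of the $L_k$ on the Murphy basis and the existence of the $L$-eigenbases it provides; once these are in hand, parts (1)--(4) reduce to bookkeeping with orthogonal idempotents and $2\times2$ matrices in a split semisimple algebra.
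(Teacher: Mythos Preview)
The paper does not prove this lemma at all: it is stated as a quotation of \cite[Corollary~2.7, Theorems~2.11, Corollary~2.13]{Ma} and used as a black box, with no argument supplied. Your proposal is therefore not competing with any proof in the paper; rather, it is a reconstruction of the argument in the cited reference. As such it is essentially correct and follows the standard route taken there: establish the upper-triangular action of the Jucys--Murphy elements on the Murphy basis, use content separation over $\Q(v)$ to produce the primitive idempotents $F_\t$, deduce the matrix-unit relations for the $\mff_{\s\t}$, and then pin down the $T_i$-action on each two-dimensional $L$-eigenspace. One small point worth tightening: in your argument for part~(2) you assert that $\gamma_\t$ depends only on $\t$ and is nonzero. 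The independence of $\s$ and $\v$ is clear from associativity once you know $\mff_{\s\t}\mff_{\t\v}\in\Q(v)\mff_{\s\v}$, but the nonvanishing deserves a word: it follows either from the explicit recursion for $\gamma_\t$ in \cite{Ma}, or from the fact that $\mff_{\t\t}=F_\t\fm_{\t\t}F_\t$ is a nonzero element of the one-dimensional space $F_\t\HH_{\Q(v)}(\Sym_n)F_\t$ and hence a nonzero scalar multiple of the idempotent $F_\t$, so $\mff_{\t\t}^2\ne0$. With that caveat your outline is sound and matches the source the paper cites.
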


We call $\{\mff_{\s\t}\mid \s,\t \in \Std(\lam), \lam\in\P_n\}$ the {\bf seminormal basis} of $\HH_{\Q(v)}(\Sym_n)$ corresponding to the Murphy basis $\{\fm_{\s\t}\mid \s,\t \in \Std(\lam), \lam\in\P_n\}$ of $\HH_{v}(\Sym_n)$. Actually the seminormal bases can be defined for the more general cyclotomic Hecke algebras of type $G(\ell,1,n)$ and play an important role in many aspects of the representation theory of Hecke algebras and KLR algebras, see \cite{Ma}, \cite{HuMathas:GradedCellular}, \cite{HuMathas:SeminormalQuiver}. The formulae in Lemma \ref{obobn} 4) gives rise to a {\bf seminormal basis} $\{\mff_\s \mid \s\in\Std(\lam)\}$ of the simple right $\HH_{\Q(v)}(\Sym_n)$-module $S(\lam)$ and the explicit matrix form of each Hecke generator $T_i$ on this basis. We usually call it {\bf the seminormal form} of $S(\lam)$ (\cite{Hoe}, \cite{Y}). Similar result holds for the simple left $\HH_{\Q(v)}(\Sym_n)$-module $S'(\lam)$.

\begin{lem}[{\cite{HuMathas:Graded Induction}}]
\label{hm2}
For any $\lam\in\P_n$ and $\s,\t\in\Std(\lam)$, we have $$\begin{aligned}
\fm_{\s\t}&=\mff_{\s\t}+\sum_{\substack{(\u,\v)\in\Std^2(n)\\ (\u,\v)\Gdom(\s,\t)}}r^{\s\t}_{\u\v}\mff_{\u\v},\\
\mff_{\s\t}&=\fm_{\s\t}+\sum_{\substack{(\u,\v)\in\Std^2(n)\\ (\u,\v)\Gdom(\s,\t)}}{\hat{r}}^{\s\t}_{\u\v}\fm_{\u\v},
\end{aligned} $$
where $r^{\s\t}_{\u\v}, {\hat{r}}^{\s\t}_{\u\v}\in\Q(v)$ for each pair $(\u,\v)$.
\end{lem}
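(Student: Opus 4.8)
The plan is to read both identities off the defining formula $\mff_{\s\t}=F_\s\fm_{\s\t}F_\t$ of~\eqref{seminormal}. The starting point is that each $F_\t$ (Definition~\ref{Ft}) is a polynomial $p_\t(L_1,\dots,L_n)$ in the Jucys--Murphy elements $L_1,\dots,L_n$, and the only feature of $p_\t$ I shall need is $p_\t(c_\t(1),\dots,c_\t(n))=1$, which is immediate from the definition.

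The essential ingredient I would invoke is the triangularity of the left and right actions of the $L_k$ on the Murphy basis with respect to the partial order $\Gedom$: for all $\lam\in\P_n$, $\s,\t\in\Std(\lam)$ and $1\le k\le n$,
$$
\fm_{\s\t}L_k=c_\t(k)\fm_{\s\t}+\sum_{\substack{(\u,\v)\in\Std^2(n)\\ (\u,\v)\Gdom(\s,\t)}}a^{(k)}_{\u\v}\fm_{\u\v},\qquad
L_k\fm_{\s\t}=c_\s(k)\fm_{\s\t}+\sum_{\substack{(\u,\v)\in\Std^2(n)\\ (\u,\v)\Gdom(\s,\t)}}b^{(k)}_{\u\v}\fm_{\u\v},
$$
with $a^{(k)}_{\u\v},b^{(k)}_{\u\v}\in\A$. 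The same-shape part of this --- that $L_k$ acts on the cell module of shape $\lam$ by a matrix which is upper triangular for the dominance order on $\Std(\lam)$, with diagonal entries $c_\t(k)$ --- is the classical result of Murphy and Mathas. The refinement that the shape-increasing contributions may be written with $\u\unrhd\s$ and $\v\unrhd\t$, not merely with $\Shape(\u)=\Shape(\v)\rhd\lam$, is what \cite{HuMathas:Graded Induction} supplies, via an induction on $n$ built on the compatibility of the Murphy cellular basis with the embedding $\HH_v(\Sym_{n-1})\hookrightarrow\HH_v(\Sym_n)$ and the combinatorics of the box containing $n$.

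Granting this, I would fix $\lam\in\P_n$ and $\s,\t\in\Std(\lam)$ and set $\mathcal V$ to be the $\Q(v)$-span of $\{\fm_{\u\v}\mid(\u,\v)\in\Std^2(n),\ (\u,\v)\Gdom(\s,\t)\}$. Transitivity of $\Gedom$ shows that $\mathcal V$ is stable under left and under right multiplication by every $L_k$, and that modulo $\mathcal V$ the element $L_k$ scales $\fm_{\s\t}$ by $c_\t(k)$ on the right and by $c_\s(k)$ on the left; an induction on degree then gives $\fm_{\s\t}\,q(L_1,\dots,L_n)\equiv q(c_\t(1),\dots,c_\t(n))\,\fm_{\s\t}$ and $q(L_1,\dots,L_n)\,\fm_{\s\t}\equiv q(c_\s(1),\dots,c_\s(n))\,\fm_{\s\t}$ modulo $\mathcal V$ for every polynomial $q$. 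Taking $q=p_\t$ yields $\fm_{\s\t}F_\t=\fm_{\s\t}+x$ with $x\in\mathcal V$, and taking $q=p_\s$ yields $F_\s\fm_{\s\t}=\fm_{\s\t}+y$ with $y\in\mathcal V$; since $\mathcal V$ is also stable under $F_\s$ (being a polynomial in the $L_k$), I would conclude
\begin{align*}
\mff_{\s\t}=F_\s\fm_{\s\t}F_\t&=F_\s(\fm_{\s\t}+x)=(\fm_{\s\t}+y)+F_\s x\\
&=\fm_{\s\t}+\sum_{\substack{(\u,\v)\in\Std^2(n)\\ (\u,\v)\Gdom(\s,\t)}}\hat r^{\s\t}_{\u\v}\fm_{\u\v}
\end{align*}
with $\hat r^{\s\t}_{\u\v}\in\Q(v)$, which is the second displayed identity of the lemma. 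The first identity then follows formally by inverting this transition matrix, which is unitriangular for the partial order $\Gedom$ on $\Std^2(n)$ and hence has an inverse of the same shape.

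The hard part is the $\Gedom$-triangularity of the $L_k$ on the whole Murphy basis --- that is, pinning down exactly which more dominant Murphy basis elements occur in $\fm_{\s\t}L_k$ and $L_k\fm_{\s\t}$. The bare cellular structure of the Murphy basis only places those extra terms in the two-sided ideal spanned by the Murphy basis elements of strictly larger shape, with no control of the form $\u\unrhd\s$, $\v\unrhd\t$; securing that control is precisely the content of the graded-induction argument of \cite{HuMathas:Graded Induction}, and would be the one step demanding genuine work if one reproved the lemma from scratch. Everything downstream of it --- the $L_k$-stability of $\mathcal V$, the scalar action modulo $\mathcal V$, and the matrix inversion --- is purely formal.
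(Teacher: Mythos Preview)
The paper does not give its own proof of this lemma: it is stated with a citation to \cite{HuMathas:Graded Induction} and used as a black box. Your outline is the standard argument and is correct; in particular you have correctly isolated the one nontrivial input, namely the $\Gedom$-triangularity (not merely shape-triangularity) of the Jucys--Murphy operators on the Murphy basis, and correctly attributed it to the cited reference. Everything you do downstream of that input is sound.
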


\bigskip
\section{Kazhdan-Lusztig bases and seminormal bases}

The purpose of this section is to give the proof of Theorems \ref{mainthm2} and \ref{mainthm3} for the triangular transition matrices relations between the KL basis $\{C'_w\mid w\in\Sym_n\}$, the seminormal basis $\{\mathfrak{f}_{\s\t}\mid \s,\t\in\Std(\lam),\lam\vdash n\}$ and the Murphy basis $\{\mathfrak{m}_{\s\t}\mid \s,\t\in\Std(\lam),\lam\vdash n\}$.

The following definitions of Kazhdan-Lusztig preorders $\leq_L$, $\leq_R$ and $\leq_{LR}$ are equivalent to their original definitions in \cite{KL}.

\begin{dfn} Let $x,y\in\Sym_n$. If there exists some $h\in\HH_v(\Sym_n)$ such that $C'_x$ appears with a nonzero coefficient in the expansion of $hC'_y$ as an $\A$-linear combination of the $C'$-bases, then we define $x\leq_L y$. Similarly, if there exists some $h'\in\HH_v(\Sym_n)$ such that $C'_x$ appears with a nonzero coefficient in the expansion of $C'_yh'$ as an $\A$-linear combination of the $C'$-bases, then we define $x\leq_R y$. We use $\leq_{LR}$ to denote the preorder generated by the two preorders $\leq_L$ and $\leq_R$. We call $\leq_L, \leq_R$ and $\leq_{LR}$ the (Kazhdan-Lusztig) left cell preorder, right cell preorder and two-sided cell preorder respectively.
\end{dfn}

It is well-known that $x\leq_L y$ if and only if $x^{-1}\leq_R y^{-1}$. The equivalence classes generated by $\leq_L$, $\leq_R$ and $\leq_{LR}$ will be called the (Kazhdan-Lusztig) left cells, right cells and two-sided cells of $\Sym_n$, which will be denoted by ``$\sim_L$'', ``$\sim_R$'' and ``$\sim_{LR}$'' respectively.

Robinson-Schensted-Knuth correspondence gives a combinatorial modes to describe the (Kazhdan-Lusztig) left cells, right cells and two-sided cells of $\Sym_n$. To state its definition, we first recall the following row-insertion algorithm.

\begin{dfn} Let $\lam\in\P_m$ and $\t$ be a standard $\lam$-tableau with entries in $\N$. Let $i\in\N$ such that none of the entry of $\t$ is equal to $i$. We define a new standard tableau $\t\leftarrow i$ of size $m+1$ inductively as follows: \begin{enumerate}
\item If $i$ is larger than all the entries in the first row of $\t$, then $\t\leftarrow i$ is just obtained by adding a box with entry $i$ at the end of the first row of $\t$;
\item Otherwise, let $\t'$ be the tableau obtained from $\t$ by removing the first row and let $i'$ be the left-most entry of the first row of $\t$ such that $i'>i$. Then this entry $i'$ is replaced by $i$ (which gives a new row $R$), and we obtain $\t\leftarrow i$ by adding as a first row $R$ to the standard tableau $\t'\leftarrow i'$.
\end{enumerate}
\end{dfn}

\begin{dfn} For any $w\in\Sym_n$, we define $$
P^{[n]}(w):=(\cdots((\emptyset\leftarrow w(1))\leftarrow w(2))\leftarrow\cdots \leftarrow w(n-1))\leftarrow w(n),\quad Q^{[n]}(w):=P^{[n]}(w^{-1}).
$$
\end{dfn}
If the context is clear, we shall omit the superscript ``$[n]$'' and write $P(w), Q(w)$ instead of $P^{[n]}(w), Q^{[n]}(w)$.

\begin{example}
  If $w =s_1s_3s_4\in \mathfrak{S}_6$, then we have
  \begin{align*}
    P(w)=P^{[6]}(w) = \begin{ytableau}
            1 & 3 & 5 & 6\\
            2 & 4\\
            \end{ytableau}~,~~
    Q(w)=Q^{[6]}(w) = \begin{ytableau}
            1 & 3 & 4 & 6\\
            2 & 5\\
            \end{ytableau}~.\\
    \end{align*}
\end{example}

\begin{thm}[Robinson-Schensted-Knuth correspondence {\cite[\S4.1]{F}}] The map \begin{align*}
\renewcommand\arraystretch{1.3}
\begin{array}{cccc}
\mathrm{RSK}\colon
& \mathfrak{S}_n
& \longrightarrow
& \Std^2(n)\\
& w
& \longmapsto
& ({P}^{[n]}(w), {Q}^{[n]}(w))
\end{array}
\end{align*}
is bijective.
\end{thm}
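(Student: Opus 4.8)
The plan is to exhibit an explicit two-sided inverse to $\RS$; this is the classical Robinson--Schensted argument. The central tool is \emph{reverse row-insertion}. Given a standard tableau $U$ whose set of entries is a finite set $A\subset\N$ together with a removable box $c$ of $U$ (one lying at the end of its row and of its column), delete $c$ and carry its entry $x$ one row up, where one replaces by $x$ the rightmost entry $x'$ of that row with $x'<x$; then $x'$ is carried up in the same way, and the process iterates until an integer $j$ is ejected from the first row. The output is a pair $(U',j)$ with $U'$ standard, $\Shape(U')=\Shape(U)\setminus\{c\}$, and entry set $A\setminus\{j\}$. Note that the box of $U$ containing its largest entry is always removable, so reverse insertion can always be applied there.

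First I would prove the elementary \textbf{bumping lemma}: if $\t$ is standard and $i$ is not an entry of $\t$, then $\t\leftarrow i$ is standard and $\Shape(\t\leftarrow i)$ is obtained from $\Shape(\t)$ by adjoining a single box $c$; moreover reverse row-insertion applied to $(\t\leftarrow i,\,c)$ returns $(\t,i)$, and conversely, for any standard $U$ and removable box $c$, forward-inserting into $U'$ the integer ejected by reverse insertion from $(U,c)$ recovers $(U,c)$. Equivalently, for each fixed finite entry set, forward and reverse insertion are mutually inverse bijections between the set of pairs $(\t,i)$ and the set of pairs $(U,c)$ with $c$ a removable box of $U$. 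This is proved by the usual case analysis comparing the horizontal positions of the bumped cells in two consecutive rows (inserting $x$ and then $x'$, the second bump path stays strictly to the right of the first when $x\le x'$ and strictly to the left when $x>x'$); it is the one step requiring genuine combinatorial work.

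With the bumping lemma in hand, introduce the recording tableau in the classical way: for $w\in\Sym_n$ let $P^{[k]}$ be the tableau after inserting $w(1),\dots,w(k)$; by the bumping lemma each insertion adds exactly one box, and placing $k$ in that box defines a standard tableau $\widetilde{Q}(w)$ of the same shape as $P^{[n]}(w)$. A short induction using the bumping lemma shows that $\Psi\colon w\mapsto\bigl(P^{[n]}(w),\widetilde{Q}(w)\bigr)$ is a bijection $\Sym_n\to\Std^2(n)$: its inverse removes from $\widetilde{Q}$ the (removable) box containing $n$, reverse-inserts in $P^{[n]}$ at that box to recover $w(n)$, and iterates; since each reverse step deletes one element from an entry set that begins as $\{1,\dots,n\}$, the recovered values form a permutation of $\{1,\dots,n\}$.

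Finally, to match the definition $Q(w):=P(w^{-1})$, it remains to prove the symmetry theorem $\widetilde{Q}(w)=P^{[n]}(w^{-1})$ (equivalently, that $\Psi(w^{-1})$ is $\Psi(w)$ with its two components interchanged). I would do this geometrically: plot the points $(i,w(i))$ for $1\le i\le n$ and run Viennot's shadow-line construction, which simultaneously produces $P^{[n]}(w)$ and $\widetilde{Q}(w)$ and is visibly invariant under reflection in the diagonal $\{y=x\}$; that reflection turns the plot of $w$ into the plot of $w^{-1}$ while interchanging the two coordinate directions, hence swaps the two output tableaux. Comparing first components gives $P^{[n]}(w^{-1})=\widetilde{Q}(w)$, so $\RS=\Psi$; in particular $\Shape P(w)=\Shape P(w^{-1})$, so that $\RS$ indeed maps into $\Std^2(n)$, and $\RS$ is a bijection. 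The two genuine obstacles are the bumping lemma and the symmetry theorem; the rest is bookkeeping.
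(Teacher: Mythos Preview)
The paper does not give its own proof of this theorem: it is stated with a citation to Fulton \cite[\S4.1]{F} and used as a black box throughout. So there is no in-paper argument to compare against; your proposal supplies a proof where the authors simply quote the literature.

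Your outline is the classical Robinson--Schensted argument and is essentially correct. A small quibble: the parenthetical you give for the bumping lemma (``the second bump path stays strictly to the right of the first when $x\le x'$\dots'') is the \emph{row-bumping lemma} about two successive forward insertions, which is not quite the statement you need here. What you actually need to verify that forward and reverse insertion are mutually inverse is the one-row observation that if $b$ bumps $a$ from position $j$ in a row (so $a$ is the leftmost entry exceeding $b$), then afterwards $b$ is the rightmost entry of that row smaller than $a$; this is immediate from strict increase along rows. Your sentence ``comparing the horizontal positions of the bumped cells in two consecutive rows'' is on target, but the illustrative parenthetical points to a different (though related) lemma. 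Apart from that, the inverse construction and the use of Viennot's shadow lines for the symmetry $\widetilde{Q}(w)=P(w^{-1})$ are standard and valid, and your conclusion that $\RS$ lands in $\Std^2(n)$ and is bijective follows.
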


If $\s,\t\in\Std(\lam)$, where $\lam\in\P_n$, then we define \begin{equation}\label{inverse}
\pi_\lam(\s,\t):=\RS^{-1}(\s,\t)\in\Sym_n .
\end{equation}

In order to simplify the notations, we set \begin{equation}\label{dmn}
\dmn:=\mathcal{D}_{(m,1^{n-m})},
\end{equation}
which is the set of minimal length right coset representatives of $\Sym_m=\Sym_{(m,1^{n-m})}$ in $\Sym_n$. Then $\dmn^{-1}:=\{d^{-1}\mid d\in\dmn\}$ is the set of minimal length left coset representatives of $\Sym_m$ in $\Sym_n$.

\begin{lem}\label{wTow1-n-1}
Let $w\in \mathfrak{S}_{n}$. There exists a unique pair $(d_{n-1},w_{n-1})\in \mathcal{D}_{n-1}(n)^{-1}\times \mathfrak{S}_{(n-1,1)}$ such that $w=d_{n-1}w_{n-1}$.
Moreover, ${Q}^{[n-1]}(w_{n-1})$ is obtained from ${Q}(w)$ by removing the box containing $n$,
i.e. \begin{align*}
Q^{[n]}(w)\downarrow_{n-1}={Q}^{[n-1]}(w_{n-1}).
\end{align*}
\end{lem}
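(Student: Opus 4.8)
The plan is to prove the two assertions in turn: first the parabolic coset decomposition, then the tableau identity, which I will reduce to it.

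For the first assertion I would use the decomposition of $w$ along the standard parabolic subgroup $\Sym_{(n-1,1)}=\langle s_1,\dots,s_{n-2}\rangle$, which is the stabiliser of $n$ in $\Sym_n$. Applying the parabolic coset decomposition recalled in Section~2 to $w^{-1}$ and to $\lam=(n-1,1)$ gives a unique factorisation $w^{-1}=z\,d$ with $z\in\Sym_{(n-1,1)}$, $d\in\mathcal{D}_{n-1}(n)$ and $\ell(w^{-1})=\ell(z)+\ell(d)$; setting $d_{n-1}:=d^{-1}$ and $w_{n-1}:=z^{-1}$ yields $w=d_{n-1}w_{n-1}$ with $d_{n-1}\in\mathcal{D}_{n-1}(n)^{-1}$ and $w_{n-1}\in\Sym_{(n-1,1)}$, and uniqueness of the pair $(d_{n-1},w_{n-1})$ is inherited from uniqueness of $(z,d)$. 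For the second assertion I would then record two features of this factorisation: since $d_{n-1}$ is the unique minimal length element of the coset $w\Sym_{(n-1,1)}$ and $\Sym_{(n-1,1)}$ fixes $n$, a short computation with reduced words shows $d_{n-1}=s_{w(n)}s_{w(n)+1}\cdots s_{n-1}$ (read as the identity when $w(n)=n$); in particular $d_{n-1}(n)=w(n)$, and $d_{n-1}^{-1}$ restricts to the unique order-preserving bijection from $\{1,\dots,n\}\setminus\{w(n)\}$ onto $\{1,\dots,n-1\}$, sending $w(n)$ to $n$. Since $w_{n-1}=d_{n-1}^{-1}w$ and $w(k)\neq w(n)$ for $k<n$, it follows that the one-line sequence $\bigl(w_{n-1}(1),\dots,w_{n-1}(n-1)\bigr)$ is exactly the standardisation of $\bigl(w(1),\dots,w(n-1)\bigr)$, while $w_{n-1}(n)=n$.

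For the tableau identity I would pass to the recording-tableau picture. By the symmetry theorem for the Robinson--Schensted--Knuth correspondence \cite{F}, $Q^{[n]}(w)=P^{[n]}(w^{-1})$ is the recording tableau of the row-insertion of the sequence $w(1),\dots,w(n)$: the box of $Q^{[n]}(w)$ with entry $k$ is the cell added to the shape at the $k$-th insertion step. Because $n$ is the largest entry of $Q^{[n]}(w)$, deleting the box containing $n$ is the same operation as passing to $Q^{[n]}(w)\downarrow_{n-1}$, and it produces precisely the recording tableau of the truncated sequence $w(1),\dots,w(n-1)$. Now the recording tableau of a word with pairwise distinct letters does not change when the word is standardised (row-insertion only ever compares letters), so it equals the recording tableau of $\mathrm{std}\bigl(w(1),\dots,w(n-1)\bigr)$; by the previous paragraph this standardisation is the one-line notation of $w_{n-1}\in\Sym_{(n-1,1)}\cong\Sym_{n-1}$, and applying the symmetry theorem once more inside $\Sym_{n-1}$ identifies its recording tableau with $Q^{[n-1]}(w_{n-1})$. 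This gives $Q^{[n]}(w)\downarrow_{n-1}=Q^{[n-1]}(w_{n-1})$.

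I do not expect a genuine obstacle here: the first assertion is routine parabolic coset combinatorics, and the second is a bookkeeping exercise once the recording-tableau interpretation is in hand. The delicate point is keeping conventions consistent — the left versus right actions of $\Sym_n$ on $\{1,\dots,n\}$, the order of composition in a product $s_i s_{i+1}\cdots s_{n-1}$, and the identity $Q^{[n]}(w)=P^{[n]}(w^{-1})$ — and, relatedly, verifying from the explicit form of $d_{n-1}$ that $w_{n-1}$ really has one-line notation $\mathrm{std}\bigl(w(1),\dots,w(n-1)\bigr)$. If one wishes to avoid the symmetry theorem, an equivalent route is to work directly with $P^{[n]}(w^{-1})$: by the first paragraph the sequence $w^{-1}(1),\dots,w^{-1}(n)$ is the one-line notation of $w_{n-1}^{-1}$ with the globally largest letter $n$ spliced in at position $w(n)$, and one invokes the standard fact (proved by a short induction on the insertion process) that inserting the globally largest letter and then deleting its corner box recovers the tableau obtained without that letter, giving $P^{[n]}(w^{-1})\downarrow_{n-1}=P^{[n-1]}(w_{n-1}^{-1})=Q^{[n-1]}(w_{n-1})$.
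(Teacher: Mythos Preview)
Your proposal is correct, and your alternative route at the end --- passing to $P^{[n]}(w^{-1})$, identifying the one-line notation of $w^{-1}$ as that of $w_{n-1}^{-1}$ with the globally largest letter $n$ spliced in at position $k=w(n)$, and then invoking the insertion-then-delete fact for the largest letter --- is precisely the paper's argument (the paper computes the explicit form of $d_{n-1}^{-1}$, reads off exactly this splicing, and says the $P$-tableau equality ``follows directly from the definition''). Your main route via the recording-tableau interpretation and standardisation invariance is a small but valid variant that trades the insertion lemma for an appeal to the symmetry theorem; either is fine.
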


\begin{proof} Since $Q(w)=P(w^{-1})$, it suffices to show that $P^{[n]}(w^{-1})\downarrow_{n-1}=P^{[n-1]}(w_{n-1}^{-1})$. We set $\lam:=(n-1,1)$. There exists a unique $1\leq k\leq n$ such that $(n)d_{n-1}^{-1}=k=d_{n-1}(n)$.
Since $\t^\lam d_{n-1}^{-1}$ is row standard, it follows that \begin{equation}\label{dn1}
d_{n-1}(j)=(j)d_{n-1}^{-1}=\begin{cases} j, &\text{if $1\leq j<k$,}\\
j+1, &\text{if $k\leq j\leq n-1$,}\\
k, &\text{if $j=n$.}
\end{cases}
\end{equation}
It follows that  $$
d_{n-1}^{-1}(j)=\begin{cases} j, &\text{if $1\leq j<k$,}\\
j-1, &\text{if $k+1\leq j\leq n$,}\\
n, &\text{if $j=k$.}
\end{cases}
$$
Now assume that $i_j=w_{n-1}^{-1}(j)$ for each $1\leq j\leq n-1$. Then $(i_1,\cdots,i_{n-1})$ is a permutation of $(1,2,\cdots,n-1)$ because $w_{n-1}^{-1}\in\Sym_{n-1}$. It follows directly from the definition that
$$
P^{[n-1]}(w_{n-1}^{-1})=P^{[n]}(w_{n-1}^{-1}d_{n-1}^{-1})\downarrow_{n-1}.
$$
This completes the proof of the lemma.
\end{proof}

\begin{prop}\label{prop:wTow1-n-m} Let $w\in \mathfrak{S}_{n}$ and $m\leq n$. There exists a unique pair $(d_{m},w_{m})\in \dmn^{-1}\times \mathfrak{S}_{m}$ such that $w=d_{m}w_{m}$.
Moreover, ${Q}^{[m]}(w_m)$ is obtained from ${Q}^{[n]}(w)$ by removing the boxes containing $m+1, m+2, \dots, n$,
i.e., \begin{align*}
{P}^{[n]}(w^{-1})\downarrow_m={Q}^{[n]}(w)\downarrow_m={Q}^{[m]}(w_m)={P}^{[m]}(w_m^{-1}) .
\end{align*}
\end{prop}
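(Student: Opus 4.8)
The plan is to prove the proposition by induction on $n-m$, peeling off one box at a time via Lemma~\ref{wTow1-n-1}. The case $n-m=0$ is trivial: take $d_m=1$ and $w_m=w\in\Sym_n=\Sym_m$, whereupon all the asserted identities reduce to tautologies. Note also that the uniqueness of the pair $(d_m,w_m)$ is immediate and logically prior to the rest of the argument: $\Sym_n=\bigsqcup_{d\in\mathcal{D}_m(n)^{-1}}d\,\Sym_m$ is a disjoint union of left cosets, so the factorisation $w=d_m w_m$ with $d_m\in\mathcal{D}_m(n)^{-1}$ and $w_m\in\Sym_m$ is forced, and it automatically satisfies $\ell(w)=\ell(d_m)+\ell(w_m)$.

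For the inductive step we have $m\le n-1$, and I would first apply Lemma~\ref{wTow1-n-1} to write $w=d_{n-1}w_{n-1}$ with $d_{n-1}\in\mathcal{D}_{n-1}(n)^{-1}$, $w_{n-1}\in\Sym_{(n-1,1)}=\Sym_{n-1}$, $\ell(w)=\ell(d_{n-1})+\ell(w_{n-1})$, and $Q^{[n]}(w)\downarrow_{n-1}=Q^{[n-1]}(w_{n-1})$. Since $(n-1)-m<n-m$, the induction hypothesis applied to $w_{n-1}\in\Sym_{n-1}$ yields a unique pair $(d'_m,w_m)\in\mathcal{D}_m(n-1)^{-1}\times\Sym_m$ with $w_{n-1}=d'_m w_m$, $\ell(w_{n-1})=\ell(d'_m)+\ell(w_m)$, and $Q^{[n-1]}(w_{n-1})\downarrow_m=Q^{[m]}(w_m)$. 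Setting $d_m:=d_{n-1}d'_m$, I then invoke the transitivity of distinguished coset representatives along the chain of standard parabolic subgroups $\Sym_m\subseteq\Sym_{n-1}\subseteq\Sym_n$ (\cite{DJ1}): the multiplication map $\mathcal{D}_{n-1}(n)^{-1}\times\mathcal{D}_m(n-1)^{-1}\to\mathcal{D}_m(n)^{-1}$ is a length-additive bijection, so $d_m\in\mathcal{D}_m(n)^{-1}$, and combining the three length identities gives $\ell(w)=\ell(d_m)+\ell(w_m)$; by the uniqueness recalled above, $(d_m,w_m)$ is precisely the pair sought. For the tableaux, restriction of standard subtableaux is transitive, so $Q^{[n]}(w)\downarrow_m=\bigl(Q^{[n]}(w)\downarrow_{n-1}\bigr)\downarrow_m=Q^{[n-1]}(w_{n-1})\downarrow_m=Q^{[m]}(w_m)$, where the middle equality is Lemma~\ref{wTow1-n-1} and the last is the induction hypothesis; finally $Q^{[n]}(w)=P^{[n]}(w^{-1})$ and $Q^{[m]}(w_m)=P^{[m]}(w_m^{-1})$ hold by definition of $\RS$, producing the full chain of equalities displayed in the statement.

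I do not expect a serious obstacle, since essentially all of the combinatorial content is already carried by Lemma~\ref{wTow1-n-1}; the proposition is its iteration. The only points that genuinely require care are the transitivity-with-length-additivity of minimal-length coset representatives along $\Sym_m\subseteq\Sym_{n-1}\subseteq\Sym_n$ (so that $d_{n-1}d'_m$ really lies in $\mathcal{D}_m(n)^{-1}$ and not merely in the coset $w\Sym_m$), and the observation that the $w_m$ obtained by iterating Lemma~\ref{wTow1-n-1} coincides with the $w_m$ of the direct decomposition $w=d_m w_m$ — which is exactly what the uniqueness of the left-coset factorisation provides, so the displayed tableau identity is well posed.
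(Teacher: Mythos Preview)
Your proposal is correct and follows essentially the same route as the paper: induction on $n-m$, peel off one step via Lemma~\ref{wTow1-n-1}, apply the induction hypothesis to $w_{n-1}\in\Sym_{n-1}$, and set $d_m:=d_{n-1}d'_m$. The only cosmetic difference is that the paper verifies $d_m\in\mathcal{D}_m(n)^{-1}$ by noting that $d_{n-1}d'_m\,\t^{(m,1^{n-m})}$ is row standard, whereas you invoke the general transitivity of distinguished coset representatives along $\Sym_m\subseteq\Sym_{n-1}\subseteq\Sym_n$; both are equally valid and amount to the same check.
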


\begin{proof} We use induction on $n-m$. If $n-m=0$, then the proposition holds trivially. If $n-m=1$, then the proposition follows from Lemma \ref{wTow1-n-1}.
Now let $k\geq 2$ and suppose that the proposition holds whenever $n-m<k$. We consider the case when $k=n-m\geq 2$.

By Lemma \ref{wTow1-n-1},
there exists a unique pair $(d_{n-1},w_{n-1})\in \mathcal{D}_{n-1}(n)^{-1}\times \mathfrak{S}_{(n-1,1)}$ such that $w=d_{n-1}w_{n-1}$ and
\begin{equation}\label{nn1}
{Q}^{[n]}(w)\downarrow_{n-1}={Q}^{[n-1]}(w_{n-1}).
\end{equation}

Now we have that $(n-1)-m<n-m=k$. By induction hypothesis, there exists a unique pair $(d'_m,{w}_m)\in \mathcal{D}_m(n-1)^{-1}\times \mathfrak{S}_{(m,1^{n-1-m})}$
such that ${w}_{n-1}={d}'_m {w}_m$
and
\begin{align*}
{Q}^{[n-1]}({w}_{n-1})\downarrow_m={Q}^{[m]}({w}_m) .
\end{align*}

Now we define $d_m:=d_{n-1}{d}'_m$. It is clear that $d_m\in\dmn^{-1}$ because $d_{n-1}{d}'_m\t^{(m,1^{n-m})}$ is row standard. Then
$$w=d_{n-1} w_{n-1}
  =d_{n-1} d'_m  w_m
  =d_m w_m $$
and by (\ref{nn1}),
$$
{Q}^{[n]}(w)\downarrow_m
= {Q}^{[n-1]}(w_{n-1})\downarrow_m
= {Q}^{[m]}({w}_m) .
$$
Therefore, the proposition is proved.
\end{proof}

\begin{example}
  Let $n=6,~w =s_1s_3s_4\in \mathfrak{S}_6$ and $m=3$. Then $d_{3}=s_{3}s_{4}\in\dmn^{-1}$ and $w_{3}=s_{1}\in\mathfrak{S}_3$, $w=d_3w_{3}$. We have
  \begin{align*}
    Q(s_1s_3s_4) & = \begin{ytableau}
            1 & 3 & 4 & 6\\
            2 & 5\\
            \end{ytableau}~,\\
    Q(s_1s_3s_4)\downarrow_3 = Q(s_1) & = \begin{ytableau}
            1 & 3\\
            2\\
            \end{ytableau}~.\\
  \end{align*}
\end{example}

Let $W_I$ be a standard parabolic subgroup of $\Sym_n$. Applying \cite[Corollary 8.2.4]{B}, we can deduce that for any $x,y\in W_I$, $x\leq_L y$ (resp., $x\leq_R y$) with respect to $W_I$ if and only if $x\leq_L y$ (resp., $x\leq_R y$) with respect to $\Sym_n$. The following useful result of Geck will play
an important role in the proof of our first main result Theorem \ref{mainthm1}.

\begin{lem}[{\cite[Proposition 3.3]{Gec03}}]
 \label{dmu1}
 Let $w=dz\in\Sym_n$, where $d\in\dmn^{-1}$, $z\in\Sym_m$, $1\leq m\leq n$. Then we have that $$\begin{aligned}
C'_{w}&=T_dC'_z+\sum_{\substack{u\in\dmn^{-1}, y\in\Sym_m\\ u<d, y\leq_L z}}\widetilde{r}_{w,u,y}T_uC'_y,\\
C'_{w^{-1}}&=C'_{z^{-1}}T_{d^{-1}}+\sum_{\substack{u\in\dmn, y\in\Sym_m\\ u<d^{-1}, y\leq_R z^{-1}}}\widetilde{r}_{w,u,y}C'_yT_u,
\end{aligned}
$$
where $\widetilde{r}_{w,u,y}\in\A$ for each triple $(w,u,y)$.
\end{lem}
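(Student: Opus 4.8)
The plan is to deduce the second identity from the first by applying the $\A$-linear anti-automorphism $\ast$: it reverses products, satisfies $T_u^\ast=T_{u^{-1}}$, commutes with the bar involution, and sends $C'_w$ to $C'_{w^{-1}}$ (since $(C'_w)^\ast$ is bar-invariant of the correct triangular shape, using the symmetry $P_{y,w}=P_{y^{-1},w^{-1}}$ of Kazhdan--Lusztig polynomials); as inversion carries $\dmn^{-1}$ onto $\dmn$ and the Bruhat order onto itself, and $y\leq_L z\iff y^{-1}\leq_R z^{-1}$, applying $\ast$ to the first identity and re-indexing $u\mapsto u^{-1}$, $y\mapsto y^{-1}$ yields the second. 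So it suffices to prove the first identity, and I would argue by induction on $\ell(w)$. If $\ell(d)=0$ then $w=z\in\Sym_m$ and $C'_w=C'_z=T_eC'_z$, with no correction terms; so assume $\ell(d)\geq 1$.

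Choose a simple reflection $s_k$ with $s_k d<d$ and put $d':=s_k d$, so $d=s_k d'$ and $\ell(d)=\ell(d')+1$. A short argument with reduced expressions shows $d'\in\dmn^{-1}$: if $d'=d'' s_i$ with $s_i\in\Sym_m$ and $\ell(d'')=\ell(d')-1$, then $ds_i=s_k d''$ has length $\leq\ell(d)-1<\ell(d)$, contradicting $d\in\dmn^{-1}$. Hence $w':=d' z$ satisfies $\ell(w')=\ell(d')+\ell(z)=\ell(w)-1$, $w=s_k w'$, and $\ell(s_k w')=\ell(w')+1$. By the inductive hypothesis $C'_{w'}=T_{d'}C'_z+\sum_{u<d',\, y\leq_L z}\widetilde{r}_{w',u,y}T_u C'_y$ (sum over $u\in\dmn^{-1}$, $y\in\Sym_m$), and by Lemma~\ref{mucoeff} $C'_{s_k}C'_{w'}=C'_w+\sum_{x:\, s_k x<x<w'}\mu(x,w')C'_x$; therefore $C'_w=(T_{s_k}+v^{-1})C'_{w'}-\sum_{x:\, s_k x<x<w'}\mu(x,w')C'_x$.

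I would then substitute the formula for $C'_{w'}$ and re-expand everything in the $\A$-basis $\{T_u C'_y\mid u\in\dmn^{-1},\, y\in\Sym_m\}$ of $\HH_v(\Sym_n)$ — a basis because $\HH_v(\Sym_n)=\bigoplus_{u\in\dmn^{-1}}T_u\HH_v(\Sym_m)$ as a free right $\HH_v(\Sym_m)$-module and $\{C'_y\mid y\in\Sym_m\}$ is the Kazhdan--Lusztig basis of $\HH_v(\Sym_m)$ (which agrees with the one computed inside $\HH_v(\Sym_n)$). The product $T_{s_k}\cdot T_{d'}C'_z=T_{s_k d'}C'_z=T_d C'_z$ supplies the asserted leading term; since $C'_{w'}$ is supported on indices $u\leq d'$ with its $u=d'$ part equal to $T_{d'}C'_z$ and every $x<w'$ has distinguished coset representative $d_x\leq d'<d$, a direct check gives that the $T_d$-component of $C'_w$ is exactly $T_d C'_z$, so all correction terms have $u<d$. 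For the correction terms the $T_u$-index stays $<d$ by the lifting property of the Bruhat order (if $s_k d<d$ and $u<d$ then $s_k u<d$, a length count ruling out $s_k u=d$, and the distinguished representative of $(s_k u)\Sym_m$ is $\leq s_k u<d$); and the $C'_y$-index stays $\leq_L z$ for the summands coming from $(T_{s_k}+v^{-1})C'_{w'}$, because left multiplication by any $T_\sigma$ with $\sigma\in\Sym_m$ preserves the $\A$-submodule $\bigoplus_{y'\leq_L y}\A C'_{y'}$ of $\HH_v(\Sym_m)$ (as $T_{s_i}=C'_{s_i}-v^{-1}$, left multiplication by $C'_{s_i}$ respects the $\leq_L$-downset, and $\leq_L$ is transitive). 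What remains is the piece $\sum_x\mu(x,w')C'_x$: each such $x$ has $\ell(x)<\ell(w')<\ell(w)$, so by the inductive hypothesis $C'_x$ is supported on indices $(u,y)$ with $u<d_x\leq d'<d$ and $y\leq_L z_x$ — leaving only the constraint $z_x\leq_L z$ to verify.

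This last point is the heart of the argument and the step I expect to be hardest. A partial reduction is clean: since $w'=d' z\leq_L z$ — which follows from Lemma~\ref{mucoeff} because, writing $d'=s_{i_1}\cdots s_{i_r}$ reduced, each partial product $s_{i_j}\cdots s_{i_r}z$ is obtained from the next by left multiplication by a simple reflection of strictly larger length and $C'_{sx}$ occurs in $C'_s C'_x$ whenever $\ell(sx)=\ell(x)+1$, together with transitivity of $\leq_L$ — we get $C'_{w'}\in\bigoplus_{t\leq_L z}\A C'_t$, which is a left ideal, so $C'_{s_k}C'_{w'}\in\bigoplus_{t\leq_L z}\A C'_t$, and hence $w\leq_L z$ and $x\leq_L z$ for every $x$ occurring. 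To finish, one identifies the left ideal $\bigoplus_{t\leq_L z}\A C'_t$ with $\bigoplus_{u\in\dmn^{-1},\, y\leq_L z}\A T_u C'_y$ (the inclusion $\supseteq$ is immediate); equivalently, one must show $t\leq_L z\Rightarrow z_t\leq_L z$ for $z\in\Sym_m$. I would attempt this by induction on the length of a shortest $\leq_L$-chain from $z$ to $t$: a \emph{promotion} last step $t=s\cdot t'$ with $\ell(t)=\ell(t')+1$ yields, by a length count exploiting $\ell(d_t\tau)=\ell(d_t)+\ell(\tau)$ for $\tau\in\Sym_m$, that $z_t=\sigma z_{t'}$ with $\ell(z_t)=\ell(\sigma)+\ell(z_{t'})$, whence $z_t\leq_L z_{t'}\leq_L z$ by the reduced-expression chain argument and the inductive hypothesis; but a \emph{$\mu$-term} last step requires understanding how the coset decomposition interacts with $\mu$-coefficients and left cells, and this is where the finer compatibility of Kazhdan--Lusztig cells with the decomposition $\Sym_n=\bigsqcup_{d\in\dmn^{-1}}d\,\Sym_m$ (in the spirit of \cite[Corollary~8.2.4]{B}) is needed. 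Once it is in place, $C'_w\in\bigoplus_{t\leq_L z}\A C'_t=\bigoplus_{u\in\dmn^{-1},\, y\leq_L z}\A T_u C'_y$, which is exactly the claimed expansion, and the induction closes.
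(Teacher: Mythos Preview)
Your derivation of the second identity from the first via the anti-involution $\ast$ is correct and is exactly what the paper does. For the first identity, the paper does not argue from scratch: it invokes \cite[Proposition~3.3]{Gec03} (stated there with $\leq_L$ taken inside the parabolic $\Sym_m$) together with the compatibility \cite[Corollary~8.2.4]{B} that $\leq_L$ in $\Sym_m$ and $\leq_L$ in $\Sym_n$ agree on elements of $\Sym_m$.

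Your inductive framework for the first identity is well set up, and the leading-term computation, the Bruhat bound $u<d$, and the stability of the $\leq_L$-constraint under left multiplication by $T_{s_k}+v^{-1}$ all go through as you describe. The unresolved point is exactly where you locate it: for each $\mu$-term $x$ you need $z_x\leq_L z$, equivalently the inclusion $\bigoplus_{t\leq_L z}\A\,C'_t\subseteq\bigoplus_{u\in\dmn^{-1},\,y\leq_L z}\A\,T_uC'_y$. Your inner induction on $\leq_L$-chains correctly handles promotion steps via Deodhar's lemma, but the single-$\mu$-step case you leave open is not a consequence of \cite[Corollary~8.2.4]{B}: that result only compares the two preorders on elements already lying in $\Sym_m$ and says nothing about the $\Sym_m$-component $z_x$ of an arbitrary $x\in\Sym_n$ with $x\leq_L z$. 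The ``finer compatibility'' you appeal to is in fact the substance of \cite[Proposition~3.3]{Gec03} itself, whose proof uses positivity of Kazhdan--Lusztig and $\mu$-coefficients in an essential way rather than pure Bruhat/Deodhar combinatorics. So your outline does not yield an independent proof, and the appropriate resolution is the one the paper takes: cite Geck.
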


\begin{proof} The first equality follows from \cite[Proposition 3.3]{Gec03} and the sentence in the paragraph above this lemma. The second equality follows from the first one by applying the anti-isomorphism ``$\ast$'' of $\HH_v(\Sym_n)$.
\end{proof}

Let $\lam\in\P_n$. We assume that $\mathcal{D}_\lam^{-1}=\{x_i \mid 1\leq i\leq d_\lam\}$, where $d_\lam:=\dim S(\lam)$. Let $(\lam,i,j)\mapsto \mathbf{w}_\lam(i,j)$ be the bijection defined in \cite[Lemma 3.4]{Gec}. Recall that $w_{\lam,0}$ is the unique longest element in $\Sym_\lam$.

\begin{lem}\label{recon} Let $\lam\in\P_n$ and $\s,\t\in\Std(\lam)$. Let $1\leq i,j\leq d_\lam$ be such that $x_i\t^\lam=\s, x_j\t^\lam=\t$. Then we have that $$
P(d(\s)^{-1}w_{\lam,0})=\s^*, \quad Q(w_{\lam,0}d(\t))=\t^*,\quad \mathbf{w}_\lam(i,j)=\pi_{\lam^*}(\s^*,\t^*) .
$$
\end{lem}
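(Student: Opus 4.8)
The statement to prove is Lemma~\ref{recon}, which has three assertions. I would treat them in the order $Q(w_{\lam,0}d(\t))=\t^*$, then $P(d(\s)^{-1}w_{\lam,0})=\s^*$ (which follows from the first by applying the inverse/transpose symmetry of RSK), and finally the identification $\mathbf{w}_\lam(i,j)=\pi_{\lam^*}(\s^*,\t^*)$, which packages the first two together with the defining property of Geck's bijection $(\lam,i,j)\mapsto\mathbf{w}_\lam(i,j)$.

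\emph{First I would} recall the two standard symmetries of the RSK correspondence: if $\RS(w)=(P(w),Q(w))$ then $\RS(w^{-1})=(Q(w),P(w))$, and (the Sch\"utzenberger/transpose symmetry) $\RS(w_{0,n}\,w\,w_{0,n})=(P(w)^{\mathrm{ev}*},Q(w)^{\mathrm{ev}*})$ where $w_{0,n}$ is the longest element of $\Sym_n$ and $\mathrm{ev}$ is the evacuation; more useful here is the cleaner fact that left multiplication by $w_{0,n}$ transposes and evacuates the $P$-tableau while leaving $Q$ essentially controlled by the descent combinatorics. Rather than invoke the full machinery, the cheaper route is to use the column-reading description: $\t^\lam d(\t)=\t$, and I would compute $w_{\lam,0}d(\t)$ directly on the level of tableaux. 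Since $\t_\lam=(\t^{\lam^*})^*$ and $w_{\lam,0}$ is the longest element of the row stabiliser of $\t^\lam$, the product $w_{\lam,0}\t^\lam$ reverses each row of $\t^\lam$; reading off the RSK insertion of the corresponding word and using that $Q$ records the order of insertion, one checks that $Q(w_{\lam,0}d(\t))$ is obtained from $Q(d(\t))=\t$ by the transpose operation, giving $\t^*$. The key input is that $w_{\lam,0}$ is $\Sym_\lam$-reduced-on-the-left relative to $d(\t)$ (i.e. $\ell(w_{\lam,0}d(\t))=\ell(w_{\lam,0})+\ell(d(\t))$ since $d(\t)\in\D_\lam$), so the two-row-insertion picture does not interfere.

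\emph{For the second identity} I would simply apply $w\mapsto w^{-1}$: from $Q(w_{\lam,0}d(\s))=\s^*$ and $\RS(w^{-1})=(Q(w),P(w))$ we get $P\big((w_{\lam,0}d(\s))^{-1}\big)=P(d(\s)^{-1}w_{\lam,0}^{-1})=P(d(\s)^{-1}w_{\lam,0})=\s^*$, using $w_{\lam,0}^{-1}=w_{\lam,0}$. \emph{For the third identity}, I would recall from \cite[Lemma~3.4]{Gec} that $\mathbf{w}_\lam(i,j)$ is characterised (up to the conventions there) by $\mathbf{w}_\lam(i,j)=d(\s)^{-1}w_{\lam,0}d(\t)$ where $x_i\t^\lam=\s$, $x_j\t^\lam=\t$ — here one must reconcile Geck's $d(\s)$ with $d(\s)^{-1}$ as flagged in the paper's Remark after the Murphy basis definition. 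Then $P(\mathbf{w}_\lam(i,j))=P(d(\s)^{-1}w_{\lam,0}d(\t))$; since right multiplication by $d(\t)$ does not change the $P$-tableau when $\ell$ is additive in the appropriate way, this equals $P(d(\s)^{-1}w_{\lam,0})=\s^*$, and symmetrically $Q(\mathbf{w}_\lam(i,j))=\t^*$, so $\mathbf{w}_\lam(i,j)=\RS^{-1}(\s^*,\t^*)=\pi_{\lam^*}(\s^*,\t^*)$.

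\emph{The main obstacle} is making the tableau-level computation of $Q(w_{\lam,0}d(\t))=\t^*$ rigorous without just asserting it: one needs a clean lemma to the effect that multiplying a permutation on the left by the longest element of a Young subgroup $\Sym_\lam$ (when the lengths add) transposes the recording tableau, i.e. $Q(w_{\lam,0}w)=Q(w)^*$ for $w\in\D_\lam$ with the understanding that $\Shape(Q(w))=\lam$. This is essentially a known fact about RSK and the Sch\"utzenberger involution restricted to a parabolic, but its proof requires either an induction on $\ell(w_{\lam,0})$ peeling off simple reflections (and tracking how a single adjacent transposition on the left changes $Q$ — a knight-move/dual-Knuth argument), or an appeal to the column-insertion vs. row-insertion duality together with $\t_\lam=(\t^{\lam^*})^*$. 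I would present the induction: the base case is $w_{\lam,0}=e$ (so $\lam=(1^n)$, $Q(w)=\t_{(1^n)}$, trivially self-conjugate), and the inductive step uses Lemma~\ref{mucoeff}-style descent bookkeeping together with the known effect of a left descent on the RSK tableaux (dual Knuth equivalence), which is precisely where the care is needed. Everything after that identity is formal manipulation with the two RSK symmetries and the definitions of $d(\s),d(\t),\mathbf{w}_\lam$.
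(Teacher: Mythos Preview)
Your proposal contains two genuine gaps.

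\textbf{The first two equalities.} You write ``$Q(d(\t))=\t$'' and build the argument on a lemma of the form $Q(w_{\lam,0}w)=Q(w)^*$ for $w\in\D_\lam$ with $\Shape(Q(w))=\lam$. But $Q(d(\t))=\t$ is false: already for $\t=\t^\lam$ one has $d(\t)=e$, and $Q(e)$ is the single-row tableau of shape $(n)$, not $\t^\lam$. The identity $\t^\lam d(\t)=\t$ records how $d(\t)$ permutes the entries of the fixed tableau $\t^\lam$; it says nothing about the RSK recording tableau of the permutation $d(\t)$. So your proposed lemma, even if it were true, does not apply to $w=d(\t)$. The paper instead verifies $P(d(\s)^{-1}w_{\lam,0})=\s^*$ directly from the definition of the insertion algorithm (the one-line word of $d(\s)^{-1}w_{\lam,0}$ is a reading word of $\s^*$), and then deduces $Q(w_{\lam,0}d(\t))=\t^*$ from it via $Q(w)=P(w^{-1})$, exactly the symmetry you invoke.

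\textbf{The third equality.} You assert that Geck's bijection satisfies $\mathbf{w}_\lam(i,j)=d(\s)^{-1}w_{\lam,0}d(\t)$. This is not the definition in \cite{Gec}, and it is false. Take $n=3$, $\lam=(2,1)$, $\s=\t=\t_\lam$: then $d(\t_\lam)=s_2$ and $d(\s)^{-1}w_{\lam,0}d(\t)=s_2s_1s_2=w_{0}$, whose RSK shape is $(1,1,1)$; whereas $\pi_{\lam^*}((\t_\lam)^*,(\t_\lam)^*)=\pi_{(2,1)}(\t^{(2,1)},\t^{(2,1)})=s_2$. Your supporting claim that ``right multiplication by $d(\t)$ does not change the $P$-tableau when $\ell$ is additive'' is also false in general (length-additivity is far weaker than Knuth equivalence, which is what governs $P$-tableaux). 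The paper's argument is genuinely different: it uses only the boundary cases $\mathbf{w}_\lam(i,1)=d(\s)^{-1}w_{\lam,0}$ and $\mathbf{w}_\lam(1,j)=w_{\lam,0}d(\t)$, which \emph{are} explicit in \cite{Gec}, combines them with the first two equalities to identify the KL right cell of $\mathbf{w}_\lam(i,1)$ as that labelled by $\s^*$ and the KL left cell of $\mathbf{w}_\lam(1,j)$ as that labelled by $\t^*$, and then invokes the characterisation (from the proof of \cite[Theorem~5.4]{Gec}) of $\mathbf{w}_\lam(i,j)$ as the unique element in the intersection of those two cells. That cell-theoretic step is essential and cannot be replaced by the product formula you propose.
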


\begin{proof} The first equality follows directly from the definition of the RSK correspondence and verification. The second equality follows from the first one because $Q(w)=P(w^{-1})$.

By the second row in  \cite[Page 658]{Gec}, we have $\mathbf{w}_\lam(i,1)=d(\s)^{-1}w_{\lam,0}$. Applying \cite[Remark 3.5(a]{Gec}, we can deduce that $\mathbf{w}_\lam(1,j)=w_{\lam,0}d(\t)$.
Combining this with the first two equalities we can conclude that $\mathbf{w}_\lam(i,1)$ lies in the Kazhdan-Lusztig right cell determined by $\s^*$ (under the RSK correspondence), while $\mathbf{w}_\lam(1,j)$ lies in the Kazhdan-Lusztig left cell determined by $\t^*$ (under the RSK correspondence).

On the other hand, by the proof of \cite[Theorem 5.4]{Gec}, we see that $\mathbf{w}_\lam(i,j)$ is the unique element in the intersection of the Kazhdan-Lusztig right cell
which contains $\mathbf{w}_\lam(i,1)$  with the Kazhdan-Lusztig left cell which contains $\mathbf{w}_\lam(1,j)$. It follows that $\mathbf{w}_\lam(i,j)=\pi_{\lam^*}(\s^*,\t^*)$. This completes the proof of the lemma.
\end{proof}

\medskip
\noindent
{\bf Proof of Theorem \ref{Geckresult1}}: This follows from \cite[Corollaries 4.11 and 5.11]{Gec} and Lemma \ref{recon}.\qed
\medskip

\begin{lem}\label{resm} Let $\lam\in\P_n$ and $\t\in\Std(\lam)$. Then for any $0\neq h=\sum_{\s\in\Std(\lam)}r_{\s}\mff_{\s\t}\in\HH_{\Q(v)}(\Sym_n)$ and
any $1\leq m\leq n$, we have $S(\mu)\cong h\HH_{\Q(v)}(\Sym_m)$, where
$\mu:=\Shape\bigl(\t\downarrow_{m}\bigr)$ and $r_{\s}\in\Q(v)$ for each pair $(\s,\t)$. In particular, $S(\mu)\cong\mff_{\s\t}\HH_{\Q(v)}(\Sym_m)$ for any $\s\in\Std(\lam)$.
\end{lem}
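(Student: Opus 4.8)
The plan is to realize $h\,\HH_{\Q(v)}(\Sym_m)$ concretely as a span of seminormal basis vectors of $\HH_{\Q(v)}(\Sym_n)$ and to recognise that span as the Young seminormal representation of $\HH_{\Q(v)}(\Sym_m)$ attached to $\mu=\Shape(\t\downarrow_m)$. Write $[\lam]\setminus[\mu]$ for the set of boxes of $\t$ occupied by $m+1,\dots,n$. For each $\s\in\Std(\lam)$ I would introduce
$$
N_\s:=\sum_{\u}\Q(v)\,\mff_{\s\u},
$$
the sum over all $\u\in\Std(\lam)$ that agree with $\t$ on $[\lam]\setminus[\mu]$. The restriction map $\u\mapsto\u\downarrow_m$ is a bijection from this index set onto $\Std(\mu)$: its inverse glues an arbitrary standard $\mu$-tableau on $[\mu]$ to the values of $\t$ on $[\lam]\setminus[\mu]$, and the result is standard because every box of $[\mu]$ carries an entry $\le m$ while every box of $[\lam]\setminus[\mu]$ carries an entry $>m$.

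The first step is to prove that $N_\s$ is a right $\HH_{\Q(v)}(\Sym_m)$-submodule of $\HH_{\Q(v)}(\Sym_n)$ and that, as such, $N_\s\cong S(\mu)$, the irreducible $\HH_{\Q(v)}(\Sym_m)$-module. As $\HH_{\Q(v)}(\Sym_m)$ is generated by $T_1,\dots,T_{m-1}$, everything is controlled by Lemma~\ref{obobn}(4) applied to $\mff_{\s\u}T_i$ for $1\le i<m$ (with the right index $\u$ playing the active role). Acting by such a $T_i$ never moves the entries $m+1,\dots,n$, so it keeps $\mff_{\s\u}$ inside $N_\s$; and the scalars occurring in that formula depend only on the contents $c_\u(i),c_\u(i+1)$ and on whether $\u(i,i+1)\ldom\u$. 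These data are unchanged when $\u$ is replaced by $\u\downarrow_m$, since a content is attached to a box and, for $i<m$, the comparison of $\u$ with $\u(i,i+1)$ only involves the shapes $\Shape(\u\downarrow_k)$ with $k\le i<m$. Comparing the resulting matrices with the seminormal form of $S(\mu)$ (Lemma~\ref{obobn}(4) for $\Sym_m$) shows that $\mff_{\s\u}\mapsto\mff^{\mu}_{\u\downarrow_m}$, where $\{\mff^{\mu}_{\u_0}\}_{\u_0\in\Std(\mu)}$ is the seminormal basis of $S(\mu)$, is an isomorphism of right $\HH_{\Q(v)}(\Sym_m)$-modules $N_\s\xrightarrow{\ \sim\ }S(\mu)$; note that this isomorphism does not depend on $\s$, in the sense that $\mff_{\s\u}$ and $\mff_{\s'\u}$ are sent to the same vector $\mff^{\mu}_{\u\downarrow_m}$.

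The second step passes from a single $\mff_{\s\t}$ to an arbitrary nonzero $h=\sum_{\s}r_\s\mff_{\s\t}$ by a kernel count, using only that $S(\mu)$ is simple. Put $\xi:=\mff^{\mu}_{\t\downarrow_m}\ne 0$ and let $\bar\theta\colon\HH_{\Q(v)}(\Sym_m)\to S(\mu)$, $x\mapsto\xi x$; since $S(\mu)$ is simple and $\xi\ne 0$, $\bar\theta$ is surjective, so $\HH_{\Q(v)}(\Sym_m)/\Ker\bar\theta\cong S(\mu)$. By the first step, for every $\s$ the map $x\mapsto\mff_{\s\t}x$ corresponds under $N_\s\cong S(\mu)$ to $\bar\theta$, hence has kernel exactly $\Ker\bar\theta$. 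The subspaces $N_\s$ involve pairwise disjoint seminormal basis vectors, so $\sum_\s N_\s=\bigoplus_\s N_\s$ and $h$ has $\s$-component $r_\s\mff_{\s\t}$; consequently, for $x\in\HH_{\Q(v)}(\Sym_m)$, $hx=0$ if and only if $\mff_{\s\t}x=0$ for all $\s$ with $r_\s\ne 0$, which, there being at least one such $\s$, is equivalent to $x\in\Ker\bar\theta$. Thus $\Ker(x\mapsto hx)=\Ker\bar\theta$, and $h\,\HH_{\Q(v)}(\Sym_m)\cong\HH_{\Q(v)}(\Sym_m)/\Ker\bar\theta\cong S(\mu)$. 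Taking $r_\s=1$ and $r_{\s'}=0$ for $\s'\ne\s$ gives the final assertion $\mff_{\s\t}\HH_{\Q(v)}(\Sym_m)\cong S(\mu)$ for every $\s\in\Std(\lam)$.

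The only genuinely delicate point is the first step: one must verify carefully that the combinatorics governing the $T_i$-action — the contents $c_\u(i),c_\u(i+1)$ and the dominance comparison of $\u$ with $\u(i,i+1)$ for $i<m$ — transport verbatim along the restriction $\u\mapsto\u\downarrow_m$, so that $N_\s$ is literally the seminormal representation of $\HH_{\Q(v)}(\Sym_m)$ labelled by $\mu$ and not merely some module admitting $S(\mu)$ as a composition factor. Everything else (the decomposition $\bigoplus_\s N_\s$, the surjectivity of $\bar\theta$, and the kernel identification) is routine once $S(\mu)$ is known to be simple.
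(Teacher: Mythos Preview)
Your proof is correct and rests on the same key observation as the paper's: for $i<m$ the right $T_i$-action on $\mff_{\s\u}$ depends only on $\u\downarrow_m$ and reproduces exactly the seminormal form of $S(\mu)$, so the span of the $\mff_{\s\u}$ with $\u$ agreeing with $\t$ outside $[\mu]$ realizes $S(\mu)$. The only difference is organisational: the paper works directly with the elements $h_i:=\sum_\s r_\s\mff_{\s\t_i}$ (your $\t_i$ ranging over that same index set), obtains a surjection $S(\mu)\twoheadrightarrow\operatorname{span}\{h_i\}$ from the seminormal comparison, and concludes by simplicity of $S(\mu)$, whereas you first treat each $N_\s$ separately and then pass to general $h$ via the common kernel $\Ker\bar\theta$; both routes are equally valid and of comparable length.
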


\begin{proof} Let $1\leq m\leq n$. We define $\mu:=\Shape\bigl(\t\downarrow_{m}\bigr)$ and $$
\{\t_1,\cdots,\t_k\}:=\{\t w\mid \text{$\t w\in\Std(\lam)$ for some $w\in\Sym_m$}\} .
$$
By Lemma \ref{obobn}, it is easy to see that $\{h_i:=\sum_{\s\in\Std(\lam)}r_{\s}\mff_{\s\t_i}\mid 1\leq i\leq k\}$ forms a $\Q(v)$-basis of $h\HH_{\Q(v)}(\Sym_m)$.

For each $1\leq i\leq m$, comparing the seminormal form of $S(\mu)$ and the right action of $T_i$ on the $\Q(v)$-basis $\{h_i\mid 1\leq i\leq k\}$ of
$h\HH_{\Q(v)}(\Sym_m)$ given in Lemma \ref{obobn}, we can deduce that there is a natural surjective homomorphism $\pi: S(\mu)\twoheadrightarrow h\HH_{\Q(v)}(\Sym_m)$. Since $S(\mu)$ is a simple $\HH_{\Q(v)}(\Sym_m)$-module, $\pi$ has to be an isomorphism. This proves that $h\HH_{\Q(v)}(\Sym_m)\cong S(\mu)$.
\end{proof}

\begin{lem}\label{Cseminormal} Let $\lam\vdash n$. For each $w\in\Sym_n$ with $\RS(w)\in\Std(\lam)\times\Std(\lam)$, we have that $$\begin{aligned}
C'_w&= v^{-\ell(w_{\lam^*,0})}\mff_{\u_w,\v_w}+\sum_{\substack{(\s,\t)\in\Std^2(n)\\ (\s,\t)\rhd (\u_w,\v_w)}}r'_{w,\s,\t}\mff_{\s\t},\\
\mff_{\u_w,\v_w}&= v^{\ell(w_{\lam^*,0})}C'_w+\sum_{\substack{y\in\Sym_n\\ (\u_y,\v_y)\rhd (\u_w,\v_w)}}r'_{w,y}C'_y,
\end{aligned}$$
where $r'_{w,\s,\t},r'_{w,y}\in\Q(v)$ for each triple $(w,\s,\t)$ and each pair $(w,y)$.
\end{lem}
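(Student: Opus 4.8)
The plan is to derive both identities by composing two triangular transition laws that are already at hand: Geck's Theorem~\ref{Geckresult1}, which relates the Kazhdan--Lusztig basis $\{C'_w\}$ to the Murphy basis $\{\fm_{\s\t}\}$ with triangularity for the partial order ``$\rhd$'' on $\Std^2(n)$, and Lemma~\ref{hm2}, which relates $\{\fm_{\s\t}\}$ to the seminormal basis $\{\mff_{\s\t}\}$ with triangularity for the partial order ``$\Gdom$''. The only structural fact needed beyond bookkeeping is the one recorded in Section~1: ``$\Gedom$'' is stronger than ``$\unrhd$'' on $\Std^2(n)$, i.e. $(\u,\v)\Gedom(\s,\t)$ implies $(\u,\v)\unrhd(\s,\t)$. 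Hence every correction term produced by Lemma~\ref{hm2} is automatically ``$\unrhd$''-comparable to its seed in the right direction, so the two laws can be chained without destroying triangularity for ``$\rhd$''.

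For the first identity I would begin with the second formula of Theorem~\ref{Geckresult1}, namely $C'_w = v^{-\ell(w_{\lam^*,0})}\fm_{\u_w,\v_w} + \sum_{(\s,\t)\rhd(\u_w,\v_w)} r_{w,\s,\t}\fm_{\s\t}$, and replace each Murphy element by its seminormal expansion $\fm_{\s\t} = \mff_{\s\t} + \sum_{(\u,\v)\Gdom(\s,\t)} r^{\s\t}_{\u\v}\mff_{\u\v}$ from Lemma~\ref{hm2}. The summand $v^{-\ell(w_{\lam^*,0})}\mff_{\u_w,\v_w}$ arising from the seed $(\s,\t) = (\u_w,\v_w)$ is the claimed leading term. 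Every other seminormal element that appears is either an $\mff_{\s\t}$ with $(\s,\t)\rhd(\u_w,\v_w)$, or an $\mff_{\u\v}$ with $(\u,\v)\Gdom(\s,\t)$ for some $(\s,\t)\unrhd(\u_w,\v_w)$; in the latter case $(\u,\v)\unrhd(\s,\t)\unrhd(\u_w,\v_w)$, and since $(\u,\v)\ne(\s,\t)$ (as $(\u,\v)\Gdom(\s,\t)$), antisymmetry of ``$\unrhd$'' forces $(\u,\v)\ne(\u_w,\v_w)$, whence $(\u,\v)\rhd(\u_w,\v_w)$. Collecting coefficients (which lie in $\Q(v)$) yields the first displayed identity.

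For the second identity I would run the analogous chain in the opposite order: start from the second formula of Lemma~\ref{hm2}, $\mff_{\u_w,\v_w} = \fm_{\u_w,\v_w} + \sum_{(\u,\v)\Gdom(\u_w,\v_w)} \hat{r}^{\u_w\v_w}_{\u\v}\fm_{\u\v}$, and rewrite every Murphy element on the right via the first formula of Theorem~\ref{Geckresult1}. Here one uses that an arbitrary $\fm_{\u\v}$ with $\u,\v\in\Std(\mu)$ equals $\fm_{\u_{w'},\v_{w'}}$ for the unique $w' := \RS^{-1}(\u^*,\v^*)\in\Sym_n$ (so that $\u_{w'}=\u$, $\v_{w'}=\v$ and $\RS(w')\in\Std(\mu^*)\times\Std(\mu^*)$), whence Theorem~\ref{Geckresult1} gives $\fm_{\u\v} = v^{\ell(w'_{\mu,0})}C'_{w'} + \sum_{(\u_z,\v_z)\rhd(\u,\v)} r_{w',z}C'_z$. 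The leading $C'_w$ comes from $\fm_{\u_w,\v_w}$, and by the same transitivity/antisymmetry argument all remaining $C'_y$ satisfy $(\u_y,\v_y)\rhd(\u_w,\v_w)$. (Equivalently, the second identity is just the inverse of the first: identifying $w\leftrightarrow(\u_w,\v_w)$ via $\RS$, the transition matrix from $\{C'_w\}$ to $\{\mff_{\u_w,\v_w}\}$ is triangular for ``$\unrhd$'' with invertible diagonal entries $v^{-\ell(w_{\lam^*,0})}$, so its inverse is triangular for the same order with diagonal entries $v^{\ell(w_{\lam^*,0})}$.)

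I do not anticipate a genuine obstacle: the argument is purely an exercise in composing known triangular relations. The point that needs care is the simultaneous handling of the two \emph{different} partial orders ``$\rhd$'' (shape-first) and ``$\Gdom$'' (box-wise) --- each time a term is produced by Lemma~\ref{hm2} one must check that it lands strictly above $(\u_w,\v_w)$ in ``$\rhd$'', which is exactly where the implication ``$\Gedom\Rightarrow\unrhd$'' together with transitivity and antisymmetry of ``$\unrhd$'' is used. A second, minor technicality is the re-indexing $\fm_{\u\v} = \fm_{\u_{w'},\v_{w'}}$ via $w'=\RS^{-1}(\u^*,\v^*)$ needed to bring arbitrary Murphy basis elements into the form required by Theorem~\ref{Geckresult1}.
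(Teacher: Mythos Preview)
Your proposal is correct and matches the paper's approach exactly: the paper's proof consists of the single sentence ``This follows from a combination of Lemma~\ref{geclPLMS} and Lemma~\ref{hm2}'' (where Lemma~\ref{geclPLMS} is Theorem~\ref{Geckresult1}), and you have carefully spelled out the bookkeeping behind that sentence. Your attention to the interaction between the two partial orders ``$\rhd$'' and ``$\Gdom$'', and in particular your use of antisymmetry of ``$\unrhd$'' to rule out $(\u,\v)=(\u_w,\v_w)$ among the correction terms, is exactly the detail one needs to make the one-line proof rigorous.
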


\begin{proof} This follows from a combination of Lemma \ref{geclPLMS} and Lemma \ref{hm2}. Note that due to the difference between ``$\Gdom$'' and ``$\rhd$'', the statement of this lemma is only a weaker version of Theorem \ref{mainthm3}.
\end{proof}

\begin{cor}\label{keycor1} Let $\lam\vdash n$. For each $w\in\Sym_n$ with $\RS(w)\in\Std(\lam)\times\Std(\lam)$, there exists an integer $r>0$ such that as a right $\HH_{\Q(v)}(\Sym_n)$-module, $C'_{w}\HH_{\Q(v)}(\Sym_n)$ is a direct summand of $$
\Bigl(\bigoplus_{\lam^*\unlhd\mu\vdash n} S(\mu)\Bigr)^{\oplus r}.
$$
Similarly, there exists an integer $r>0$ such that as a left $\HH_{\Q(v)}(\Sym_n)$-module, $\HH_{\Q(v)}(\Sym_n)C'_{w}$ is a direct summand of $$
\Bigl(\bigoplus_{\lam^*\unlhd\mu\vdash n} S'(\mu)\Bigr)^{\oplus r}.
$$
\end{cor}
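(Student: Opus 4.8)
The plan is to deduce this corollary directly from Lemma \ref{Cseminormal} together with the multiplicative structure of the seminormal basis in Lemma \ref{obobn}. First I would fix $w\in\Sym_n$ with $\RS(w)\in\Std(\lam)\times\Std(\lam)$, so that $\u_w=P(w)^*$ and $\v_w=Q(w)^*$ both have shape $\lam^*$. By Lemma \ref{Cseminormal}, the element $C'_w$ lies in the $\Q(v)$-span of those $\mff_{\s\t}$ with $(\s,\t)\unrhd(\u_w,\v_w)$; in particular, every such $\mff_{\s\t}$ has $\Shape(\t)=\Shape(\s)=:\mu$ with $\mu\unrhd\lam^*$. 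Hence $C'_w$ lies in $\bigoplus_{\lam^*\unlhd\mu\vdash n}\mathcal{I}_\mu$, where $\mathcal{I}_\mu$ denotes the span of $\{\mff_{\s\t}\mid\s,\t\in\Std(\mu)\}$, which is a two-sided ideal of $\HH_{\Q(v)}(\Sym_n)$ isomorphic as a right module to $S(\mu)^{\oplus d_\mu}$ by Lemma \ref{obobn} 2)--3).

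Next I would use the orthogonality relation $\mff_{\s\t}\mff_{\u\v}=\delta_{\t\u}\gamma_\t\mff_{\s\v}$ to understand $C'_w\HH_{\Q(v)}(\Sym_n)$. Writing $C'_w=\sum_{(\s,\t)}c_{\s\t}\mff_{\s\t}$ (with $c_{\u_w\v_w}=v^{-\ell(w_{\lam^*,0})}\neq 0$ and the sum running over pairs with $\Shape(\s)=\Shape(\t)\unrhd\lam^*$), right multiplication by $\HH_{\Q(v)}(\Sym_n)$ only changes the second tableau index within a fixed shape, so $C'_w\HH_{\Q(v)}(\Sym_n)$ is contained in $\bigoplus_{\lam^*\unlhd\mu}\mathcal{I}_\mu$. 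Since $\HH_{\Q(v)}(\Sym_n)$ is split semisimple and each $\mathcal{I}_\mu\cong S(\mu)^{\oplus d_\mu}$, every submodule of $\bigoplus_\mu\mathcal{I}_\mu$ is a direct summand and is isomorphic to a direct sum of copies of the various $S(\mu)$ with $\mu\unrhd\lam^*$. Taking $r:=\max_{\lam^*\unlhd\mu}d_\mu$ (or simply $r:=n!$) gives that $C'_w\HH_{\Q(v)}(\Sym_n)$ is a direct summand of $\bigl(\bigoplus_{\lam^*\unlhd\mu\vdash n}S(\mu)\bigr)^{\oplus r}$, as claimed.

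For the second assertion one can either repeat the argument on the left — using that $C'_w$ also lies in the span of $\mff_{\s\t}$ with $\Shape(\s)\unrhd\lam^*$, and that left multiplication fixes the first tableau index within a shape — or apply the anti-automorphism $\ast$ of $\HH_v(\Sym_n)$, which extends to $\HH_{\Q(v)}(\Sym_n)$, sends $C'_w$ to $C'_{w^{-1}}$, interchanges left and right ideals, and sends $S(\mu)$ to $S'(\mu)$; noting that $\Shape(\v_{w^{-1}})=\Shape(\u_w)=\lam^*$, the first part applied to $w^{-1}$ then yields the statement for $\HH_{\Q(v)}(\Sym_n)C'_w$. The only mild subtlety, and the main thing to get right, is bookkeeping the shapes: one must check that \emph{all} tableaux appearing in the expansion of $C'_w$ (not just $(\u_w,\v_w)$) have shape dominating $\lam^*$, which is exactly what the condition $(\s,\t)\rhd(\u_w,\v_w)$ in Lemma \ref{Cseminormal} guarantees via the definition of $\unrhd$ on $\Std^2(n)$.
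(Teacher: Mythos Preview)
Your proposal is correct and follows essentially the same approach as the paper: both expand $C'_w$ in the seminormal basis via Lemma \ref{Cseminormal}, observe that all occurring shapes dominate $\lam^*$, and then invoke semisimplicity to conclude that $C'_w\HH_{\Q(v)}(\Sym_n)$ is a direct summand of a suitable sum of $S(\mu)$'s. The only cosmetic difference is that you package the argument through the Wedderburn components $\mathcal{I}_\mu$, whereas the paper writes $C'_w\HH_{\Q(v)}(\Sym_n)$ inside a sum $M_1$ of individual modules $\mff_{\s\t}\HH_{\Q(v)}(\Sym_n)$ and passes through a surjection from the external direct sum $M_2$; the content is the same.
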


\begin{proof} We only prove the first half of the corollary as the second half can be proved in a similar argument. Applying Lemma \ref{Cseminormal}, we get that $$
C'_{w}\HH_{\Q(v)}(\Sym_n)=\Bigl( v^{-\ell(w_{\lam^*,0})}\mff_{\u_w,\v_w}+\sum_{\substack{(\s,\t)\in\Std^2(n)\\ (\s,\t)\rhd (\u_w,\v_w)}}r'_{w,\s,\t}\mff_{\s\t}\Bigr)\HH_{\Q(v)}(\Sym_n)
$$
is contained in $$
M_1:=\mff_{\u_w,\v_w}\HH_{\Q(v)}(\Sym_n)+\sum_{\substack{(\s,\t)\in\Std^2(n)\\ (\s,\t)\rhd (\u_w,\v_w)}}\mff_{\s\t}\HH_{\Q(v)}(\Sym_n).
$$
Since there is a natural surjection from $$
M_2:=\mff_{\u_w,\v_w}\HH_{\Q(v)}(\Sym_n)\bigoplus\bigoplus_{\substack{(\s,\t)\in\Std^2(n)\\ (\s,\t)\rhd (\u_w,\v_w)}}\mff_{\s\t}\HH_{\Q(v)}(\Sym_n)
$$
onto $M_1$ and $\HH_{\Q(v)}(\Sym_n)$ is semisimple, and $$
\mff_{\u_w,\v_w}\HH_{\Q(v)}(\Sym_n)\cong S(\lam^*),\quad \mff_{\s\t}\HH_{\Q(v)}(\Sym_n)\cong S(\Shape(\t)),
$$
it follows that $S(\mu)$ is a direct summand of $C'_{w}\HH_{\Q(v)}(\Sym_n)$ only if $\mu\unrhd\lam^*$, from which the first half of the lemma follows.
\end{proof}

%

\medskip
\noindent
{\bf Proof of Theorem \ref{mainthm3}}: It suffices to prove the first equality in Theorem \ref{mainthm3} holds for $C'_w$. We use induction on $\ell(w)$. If $\ell(w)=0$ then $w=1$ and hence $\u_w=\v_w=\t^{(1^n)}$. Since $\s\gdom \t^{(1^n)}$ for any $\s\in\Std(n)$, the first equality in Theorem \ref{mainthm3} now follows from Lemma \ref{Cseminormal}.

Let $w\in\Sym_n$ with $\ell(w)=k>0$. Assume the first equality in Theorem \ref{mainthm3} holds for any $C'_{\hat{w}}$ with $\ell(\hat{w})<k$. We now consider $C'_w$. By Lemma \ref{Cseminormal}, we have that $$
C'_w= v^{-\ell(w_{\lam^*,0})}\mff_{\u_w,\v_w}+\sum_{\substack{(\s,\t)\in\Std^2(n)\\ (\s,\t)\rhd (\u_w,\v_w)}}r'_{w,\s,\t}\mff_{\s\t},
$$
where $r'_{w,\s,\t}\in\Q(v)$ for each triple $(w,\s,\t)$.

Suppose that the first equality in Theorem \ref{mainthm3} does not hold for $C'_w$. In other words, there exists some $(\s,\t)\in\Std^2(n)$ such that  $(\s,\t)\rhd (\u_w,\v_w)$, $r'_{w,\s,\t}\neq 0$, but $(\s,\t)\not\Gdom (\u_w,\v_w)$. We fix such a pair $(\s,\t)$. Then by definition of ``$\Gdom$'', we can find an integer $1\leq m<n$ such that
either $\Shape(\s\downarrow_m)\ntrianglerighteq\Shape(\u_w\downarrow_m)$ or $\Shape(\t\downarrow_m)\ntrianglerighteq\Shape(\v_w\downarrow_m)$.

Suppose that $\Shape(\t\downarrow_m)\ntrianglerighteq\Shape(\v_w\downarrow_m)$. Then applying Lemma \ref{obobn} and Lemma
\ref{resm}, we can deduce that \begin{equation}\label{sta1}
\begin{matrix}\text{there exists some $\mu\in\P_m$ with $\mu\ntrianglerighteq\Shape(\v_w\downarrow_m)$, such that $S(\mu)$ occurs}\\
\text{as a direct summand of $C'_w\HH_{\Q(v)}(\Sym_m)$.}\end{matrix}
\end{equation}

On the other hand, we can decompose $w=dz$, where $d\in\dmn^{-1}$, $z\in\Sym_m$, where $1\leq m< n$. Applying Lemma \ref{dmu1}, we can deduce that $$
C'_{w}\HH_{\Q(v)}(\Sym_m)=\Bigl(T_{d}C'_z+\sum_{\substack{u\in\dmn^{-1}, y\in\Sym_m\\ u<d, y\leq_L z}}\widetilde{r}_{w,u,y}T_uC'_y\Bigr)\HH_{\Q(v)}(\Sym_m)
$$
is contained in $$
M_1(d,z):=T_dC'_z\HH_{\Q(v)}(\Sym_m)+\sum_{\substack{u\in\dmn^{-1}, y\in\Sym_m\\ u<d, y\leq_L z}}\widetilde{r}_{w,u,y}T_uC'_y\HH_{\Q(v)}(\Sym_m) .
$$
There is a natural surjection from $$
M_2(d,z):=C'_z\HH_{\Q(v)}(\Sym_m)\bigoplus\Bigl(\bigoplus_{\substack{u\in\dmn^{-1}, y\in\Sym_m\\ u<d, y\leq_L z}}\widetilde{r}_{w,u,y}C'_y\HH_{\Q(v)}(\Sym_m)\Bigr).
$$
onto $M_1(d,z)$. Since $\HH_{\Q(v)}(\Sym_m)$ is semisimple, it follows that $M_1(d,z)$ is a direct summand of $M_2(d,z)$. Hence $C'_{dz}\HH_{\Q(v)}(\Sym_m)$ is a direct summand of $M_2(d,z)$.

Applying Corollary \ref{keycor1}, we can deduce that for any $\mu\in\P_m$, $S(\mu)$ is a direct summand  $C'_z\HH_{\Q(v)}(\Sym_m)$ only if $\mu\unrhd\Shape(\v_z\downarrow_m)=\Shape(Q^{[m]}(z))^*$. Similarly, by Corollary \ref{keycor1}, for any $y\in\Sym_m$ with $y\leq_L z$, $S(\mu)$ is a direct summand  $C'_y\HH_{\Q(v)}(\Sym_m)$ only if $\mu\unrhd\Shape(\v_y\downarrow_m)\unrhd\Shape(\v_z\downarrow_m)$. However, applying Proposition \ref{prop:wTow1-n-m}, we can deduce that $$\begin{aligned}
\Shape(\v_z\downarrow_m)&=\Shape(Q^{[m]}(z))^*=\Shape(Q^{[n]}(dz)\downarrow_{m})^*=\Shape(Q^{[n]}(w)\downarrow_{m})^*\\
&=\Shape(Q^{[n]}(w)^*\downarrow_{m})=\Shape(\v_w\downarrow_{m}) .
\end{aligned}
$$

It follows that there exists an integer $r>0$ such that as a right $\HH_{\Q(v)}(\Sym_m)$-module, $C'_{w}\HH_{\Q(v)}(\Sym_m)$ is a direct summand of $$
\Bigl(\bigoplus_{\substack{\mu\vdash m\\ \mu\unrhd\Shape(\v_{w}\downarrow_{m})}} S(\mu)\Bigr)^{\oplus r},
$$
which is a contradiction to (\ref{sta1}).

Finally, suppose that $\Shape(\s\downarrow_m)\ntrianglerighteq\Shape(\u_w\downarrow_m)$. Then applying Lemma \ref{obobn} and Lemma
\ref{resm}, we can deduce that \begin{equation}\label{sta2}
\begin{matrix}\text{there exists some $\mu\in\P_m$ with $\mu\ntrianglerighteq\Shape(\u_w\downarrow_m)$, such that $S'(\mu)$ occurs}\\
\text{as a direct summand of $\HH_{\Q(v)}(\Sym_m)C'_w$.}\end{matrix}
\end{equation}

We can decompose $w=zd$, where $d\in\dmn$, $z\in\Sym_m$, where $1\leq m< n$. Applying Lemma \ref{dmu1}, we can deduce that $$
\HH_{\Q(v)}(\Sym_m)C'_{w}=\HH_{\Q(v)}(\Sym_m)\Bigl(C'_zT_{d}+\sum_{\substack{u\in\dmn, y\in\Sym_m\\ u<d, y\leq_R z}}p_{w,u,y}C'_yT_u\Bigr)
$$
is contained in $$
M'_1(d,z):=\HH_{\Q(v)}(\Sym_m)C'_zT_d+\sum_{\substack{u\in\dmn, y\in\Sym_m\\ u<d, y\leq_R z}}p_{w,u,y}\HH_{\Q(v)}(\Sym_m)C'_yT_u ,
$$
where $p_{w,u,y}\in\A$ for each triple $(w,u,y)$. There is a natural surjection from $$
M'_2(d,z):=\HH_{\Q(v)}(\Sym_m)C'_z\bigoplus\Bigl(\bigoplus_{\substack{u\in\dmn, y\in\Sym_m\\ u<d, y\leq_R z}}p_{w,u,y}\HH_{\Q(v)}(\Sym_m)C'_y\Bigr).
$$
onto $M'_1(d,z)$. Since $\HH_{\Q(v)}(\Sym_m)$ is semisimple, it follows that $M'_1(d,z)$ is a direct summand of $M'_2(d,z)$. Hence $\HH_{\Q(v)}(\Sym_m)C'_{zd}$ is a direct summand of $M'_2(d,z)$.

Applying Corollary \ref{keycor1}, we can deduce that for any $\mu\in\P_m$, $S'(\mu)$ is a direct summand  $\HH_{\Q(v)}(\Sym_m)C'_z$ only if $\mu\unrhd\Shape(\u_z\downarrow_m)=\Shape(P^{[m]}(z))^*$. Similarly, by Corollary \ref{keycor1}, for any $y\in\Sym_m$ with $y\leq_R z$, $S'(\mu)$ is a direct summand  $\HH_{\Q(v)}(\Sym_m)C'_y$ only if $\mu\unrhd\Shape(\u_y\downarrow_m)\unrhd\Shape(\u_z\downarrow_m)$. However, applying Proposition \ref{prop:wTow1-n-m}, we can deduce that $$\begin{aligned}
\Shape(\u_z\downarrow_m)&=\Shape(P^{[m]}(z))^*=\Shape(P^{[n]}(zd)\downarrow_{m})^*=\Shape(P^{[n]}(w)\downarrow_{m})^*\\
&=\Shape(P^{[n]}(w)^*\downarrow_{m})=\Shape(\u_w\downarrow_{m}) .
\end{aligned}
$$

It follows that there exists an integer $r>0$ such that as a left $\HH_{\Q(v)}(\Sym_m)$-module, $\HH_{\Q(v)}(\Sym_m)C'_{w}$ is a direct summand of $$
\Bigl(\bigoplus_{\substack{\mu\vdash m\\ \mu\unrhd\Shape(\u_{w}\downarrow_{m})}} S'(\mu)\Bigr)^{\oplus r},
$$
which is a contradiction to (\ref{sta2}). This completes the proof of Theorem \ref{mainthm3}.\qed

\medskip
\noindent
{\bf Proof of Theorem \ref{mainthm2}}: This follows from Theorem \ref{mainthm3} and Lemma \ref{hm2}.\qed

%
%
%
%
%
%
%

\medskip
\noindent
{\textbf{Proof of Theorem \ref{mainthm1}}}: Let $x,y\in\Sym_n$ with $x\leq_L y$. Then there exists some $h\in\HH_v(\Sym_n)$ such that $C'_x$ appears with a nonzero coefficient in the expansion of $hC'_y$ as a linear combination of the Kazhdan-Lusztig $C'$-bases. Let $\lam\in\P_n$ such that $\RS(y)\in\Std^2(\lam)$. Applying Theorem \ref{mainthm3}, we can get that \begin{equation}\label{Cyfst1}
C'_y= v^{-\ell(w_{\lam^*,0})}\mff_{\u_y,\v_y}+\sum_{\substack{(\s,\t)\in\Std^2(n)\\ (\s,\t)\Gdom (\u_y,\v_y)}}r'_{y,\s,\t}\mff_{\s\t},
\end{equation}
where $r'_{y,\s,\t}\in\Q(v)$ for each triple $(y,\s,\t)$. It follows that \begin{equation}\label{Cyfst2}
hC'_y=\sum_{\u\in\Std(\lam^*)}r_{y,\u}''\mff_{\u,\v_y}+\sum_{\substack{(\s,\t)\in\Std^2(n)\\ \t \unrhd \v_y}}r''_{y,\s,\t}\mff_{\s\t},
\end{equation}
where $r_{y,\u}'', r''_{y,\s,\t}\in\Q(v)$ for each $(y,\u)$ and $(y,\s,\t)$.

By the second equality in Theorem \ref{mainthm3}, for any $(\s,\t)\in\Std^2(\mu)$, we have that \begin{equation}\label{Cyfst3}
\mff_{\s\t}= v^{\ell(w_{\mu^*,0})}C'_{\pi_{\mu^*}(\s^*,\t^*)}+\sum_{\substack{z\in\Sym_n\\ (\u_z,\v_z)\Gdom (\s,\t)}}r'''_{\s,\t,z}C'_z,
\end{equation}
where $r'''_{\s,\t,z}\in\Q(v)$ for each triple $(\s,\t,z)$.

Now combining (\ref{Cyfst2}), (\ref{Cyfst3}) and our assumption that $C'_x$ appears with a nonzero coefficient in the expansion of $hC'_y$ as a linear combination of the Kazhdan-Lusztig $C'$ bases, we can deduce that $\v_x\unrhd\v_y$ and hence $Q(x)\unlhd Q(y)$. This completes the proof of Theorem \ref{mainthm1}.\qed

\bigskip
\section{Twisted Kazhdan-Lusztig bases and dual seminormal bases}

In this section, we shall prove the results on the relation between twisted Kazhdan-Lusztig bases, dual seminormal bases and dual Murphy bases.


\begin{dfn}[{\cite{Mur}}] Let $\lam\in\P_n$. We define $$
\fn_{\lam}:=\sum_{w\in\Sym_{\lam}}(-v)^{-\ell(w)}T_w.
$$
For any $\s,\t\in\Std(\lam)$, we set \begin{equation}\label{nst}
\fn_{\s\t}:=T_{d(\s)}^*\fn_{\lam}T_{d(\t)}. \end{equation}
\end{dfn}

\begin{thm}[{\cite{Mur}}] The set $\{\fn_{\s\t}\mid \s,\t\in\Std(\lam),\lam\in\P_n\}$ forms an $\A$-basis of $\HH_{v}(\Sym_n)$.
\end{thm}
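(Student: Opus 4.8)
The plan is to deduce this statement from the theorem of Murphy (stated just above) that $\{\fm_{\s\t}\mid \s,\t\in\Std(\lam),\lam\in\P_n\}$ is an $\A$-basis of $\HH_v(\Sym_n)$, by transporting that basis through the ring involution $j$ introduced earlier. Recall that $j$ is the $\Z$-algebra automorphism of $\HH_v(\Sym_n)$ with $j(v)=v^{-1}$ and $j(T_w)=\eps_w T_w$ for every $w\in\Sym_n$; in particular $j$ is $\A$-semilinear with respect to the bar automorphism $a\mapsto\overline{a}$ of $\A$, it is multiplicative, and $j^2=\id$, so $j$ is bijective.

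First I would set $x_\lam:=\sum_{w\in\Sym_\lam}v^{\ell(w)}T_w$, so that $\fm_{\s\t}=T_{d(\s)}^\ast x_\lam T_{d(\t)}$ with $T_{d(\s)}^\ast=T_{d(\s)^{-1}}$, and compute
$$
j(x_\lam)=\sum_{w\in\Sym_\lam}\overline{v^{\ell(w)}}\,\eps_w T_w=\sum_{w\in\Sym_\lam}(-v)^{-\ell(w)}T_w=\fn_\lam .
$$
Since $j$ is multiplicative and $j(T_{d(\s)^{-1}})=\eps_{d(\s)}T_{d(\s)^{-1}}=\eps_{d(\s)}T_{d(\s)}^\ast$ (and likewise for $\t$), applying $j$ to $\fm_{\s\t}=T_{d(\s)}^\ast x_\lam T_{d(\t)}$ yields
$$
j(\fm_{\s\t})=\eps_{d(\s)}\eps_{d(\t)}\,T_{d(\s)}^\ast \fn_\lam T_{d(\t)}=\eps_{d(\s)}\eps_{d(\t)}\,\fn_{\s\t}.
$$

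It then remains to observe that a bijective $\A$-semilinear self-map of $\HH_v(\Sym_n)$ carries an $\A$-basis to an $\A$-basis: spanning is immediate, and from $\sum a_{\s\t}\fn_{\s\t}=0$ one obtains, after applying $j^{-1}=j$ and the units $\eps_{d(\s)}\eps_{d(\t)}=\pm1$, the relation $\sum \overline{a_{\s\t}}\,(\pm1)\,\fm_{\s\t}=0$, whence all $a_{\s\t}=0$ since $\{\fm_{\s\t}\}$ is an $\A$-basis and $a\mapsto\overline{a}$ is injective. As $\{\fn_{\s\t}\}=\{\eps_{d(\s)}\eps_{d(\t)}\,j(\fm_{\s\t})\}$, this finishes the argument. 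The proof is entirely formal once $j$ is in hand, so there is no genuine obstacle; the only points needing (routine) care are the identity $j(x_\lam)=\fn_\lam$ and the fact that a semilinear bijection preserves bases. If one wished to avoid $j$, the alternative would be to mimic Murphy's original argument and establish a triangularity of $\{\fn_{\s\t}\}$ against the standard basis $\{T_w\}$ directly, where the bookkeeping of leading terms—which the $j$-twist sidesteps—would be the part demanding attention.
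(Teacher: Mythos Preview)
Your proof is correct. The paper gives no argument at all for this statement: it is stated as a theorem with attribution to Murphy \cite{Mur} and left at that. So there is nothing in the paper to compare against beyond the citation.

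Your route via the involution $j$ is a clean way to avoid repeating Murphy's combinatorics: once one already accepts that $\{\fm_{\s\t}\}$ is an $\A$-basis, the identity $j(\fm_{\s\t})=\eps_{d(\s)}\eps_{d(\t)}\fn_{\s\t}$ together with the fact that a bar-semilinear bijection sends $\A$-bases to $\A$-bases does the job immediately. The only places where anything can go wrong are exactly the two you flag---the computation $j(x_\lam)=\fn_\lam$ and the semilinear-bijection-preserves-bases step---and both are handled correctly. Murphy's original proof (to which the paper defers) instead establishes directly a unitriangular expansion of $\fn_{\s\t}$ against the $T_w$-basis, parallel to what he does for $\fm_{\s\t}$; your approach trades that bookkeeping for the one-line observation about $j$, at the cost of relying on the $\fm$-basis result as input.
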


We call $\{\fn_{\s\t}\mid \s,\t\in\Std(\lam),\lam\in\P_n\}$ the {\bf dual Murphy basis} of $\HH_{v}(\Sym_n)$. It is cellular in the sense of Graham and Lehrer \cite{GL}.

\begin{dfn}\label{gdfn2} Let $\lam\in\P_n$. For any $\s,\t\in\Std(\lam)$, we define $$\mfg_{\s\t}:=F_{\s^*}\fn_{\s\t}F_{\t^*}. $$
\end{dfn}

\begin{cor}\label{obobn2} The set \begin{equation}\label{dualsemi}\{\mfg_{\s\t}\mid \s,\t \in \Std(\lam), \lam\in\P_n\}\end{equation} is a basis of $\HH_{\Q(v)}(\Sym_n)$. Moreover,
\begin{enumerate}
\item[1)] If $\s, \t, \u$ and $\v$ are standard tableaux, then $\mfg_{\s\t}\mfg_{\u\v}=\delta_{\t\u}\gamma'_{\t}\mfg_{\s\v}$, where $\gamma'_{\t}\in\Q(v)^{\times}$;
\item[2)] For any $(\s,\t)\in\Std^2(n)$, we have $\mfg_{\s\t}\in\Q(v)^{\times}\mff_{\s^*\t^*}$. In particular, $\mfg_{\s\t}\HH_{\Q(v)}(\Sym_n)\cong S(\lam^*)$,
$\HH_{\Q(v)}(\Sym_n)\mfg_{\s\t}\cong S'(\lam^*)$.
\item[3)] For any $1\leq m\leq n$,  $\mfg_{\s\t}\HH_{\Q(v)}(\Sym_m)\cong S(\mu^*)$,
$\HH_{\Q(v)}(\Sym_m)\mfg_{\t\s}\cong S'(\mu^*)$, where $\mu:=\Shape(\t\downarrow_m)$.
\end{enumerate}
\end{cor}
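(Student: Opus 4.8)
The plan is to deduce Corollary~\ref{obobn2} from the already-established properties of the seminormal basis $\{\mff_{\s\t}\}$ (Lemma~\ref{obobn} and Lemma~\ref{resm}) together with a careful bookkeeping of conjugation of tableaux. The central observation is part~2): since $\fn_{\s\t}$ is obtained from $\fm_{\s\t}$ by replacing the symmetriser over $\Sym_\lam$ with the antisymmetriser, and since $\fm_{\s\t}$ and $\fn_{\s\t}$ are related under the natural algebra automorphism of $\HH_v(\Sym_n)$ interchanging the Murphy and dual Murphy cellular structures, one expects $\mfg_{\s\t}$ to be a nonzero scalar multiple of $\mff_{\s^*\t^*}$. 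First I would make this precise. Recall that $d(\s^*)$ and $d(\s)$ are related by $d(\s^*)=w_{0,n}\,d(\s)\,w_{0,\lam}$-type identities; more usefully, it is classical (see e.g.\ \cite{Mur}, \cite{Ma}) that under the anti-involution $\ast$ and the sign-twist $j$ (both defined in Section~2) the dual Murphy basis element $\fn_{\s\t}$ maps, up to sign and up to conjugating all tableaux, to the Murphy basis element $\fm_{\s^*\t^*}$. Applying the same automorphism to the definition $\mff_{\s\t}=F_\s\fm_{\s\t}F_\t$ and using that $F_\s$ is sent to $F_{\s^*}$ (this is where $L_k\mapsto$ its image and $c_\s(k)\mapsto c_{\s^*}(k)^{\pm1}$ enter, via \eqref{Ft}), one gets that $F_{\s^*}\fn_{\s\t}F_{\t^*}$ equals $\mff_{\s^*\t^*}$ up to a unit in $\Q(v)$. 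This proves~2), and the isomorphism statements $\mfg_{\s\t}\HH_{\Q(v)}(\Sym_n)\cong S(\lam^*)$ etc.\ then follow from Lemma~\ref{obobn}~3).

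Once 2) is in hand, the remaining parts are formal. Part~1) follows immediately: writing $\mfg_{\s\t}=a_{\s\t}\mff_{\s^*\t^*}$ with $a_{\s\t}\in\Q(v)^\times$, Lemma~\ref{obobn}~2) gives
$$
\mfg_{\s\t}\mfg_{\u\v}=a_{\s\t}a_{\u\v}\mff_{\s^*\t^*}\mff_{\u^*\v^*}=a_{\s\t}a_{\u\v}\,\delta_{\t^*\u^*}\gamma_{\t^*}\mff_{\s^*\v^*}=\delta_{\t\u}\,\gamma'_{\t}\,\mfg_{\s\v},
$$
where $\gamma'_\t:=a_{\s\t}a_{\u\v}\gamma_{\t^*}/a_{\s\v}$, which one checks is independent of $\s,\v$ by the usual associativity argument (compare the coefficient of $\mfg_{\s\v}$ in $(\mfg_{\s\t}\mfg_{\t\t})\mfg_{\t\v}$) and lies in $\Q(v)^\times$ because each factor does. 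That $\{\mfg_{\s\t}\}$ is a $\Q(v)$-basis of $\HH_{\Q(v)}(\Sym_n)$ is then clear since $(\s,\t)\mapsto(\s^*,\t^*)$ is a bijection on $\Std^2(n)$ and $\{\mff_{\s^*\t^*}\}$ is a basis by Lemma~\ref{obobn}~1).

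For part~3) I would invoke Lemma~\ref{resm} directly. By~2), $\mfg_{\s\t}\HH_{\Q(v)}(\Sym_m)=\Q(v)^\times\mff_{\s^*\t^*}\HH_{\Q(v)}(\Sym_m)$, and Lemma~\ref{resm} (applied with the tableau $\t^*$ in the right-hand slot) identifies this with $S(\nu)$ where $\nu=\Shape(\t^*\!\downarrow_m)$. The point is simply that $\Shape(\t^*\!\downarrow_m)=\Shape\bigl((\t\!\downarrow_m)^*\bigr)=\Shape(\t\!\downarrow_m)^*=\mu^*$, since taking the first $m$ entries commutes with conjugation of tableaux. The left-module statement $\HH_{\Q(v)}(\Sym_m)\mfg_{\t\s}\cong S'(\mu^*)$ is the mirror image, obtained by applying the anti-isomorphism $\ast$ (note $\ast$ sends $F_\t$ to $F_\t$ and interchanges left and right multiplication, and $\mfg_{\t\s}^\ast$ is a unit multiple of $\mfg_{\s\t}$), or directly from the left-handed version of Lemma~\ref{resm}.

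The main obstacle, and the only step requiring genuine care rather than bookkeeping, is establishing the identity $\mfg_{\s\t}\in\Q(v)^\times\mff_{\s^*\t^*}$ in part~2) — specifically, pinning down precisely which algebra (anti-)automorphism of $\HH_{\Q(v)}(\Sym_n)$ carries $\fn_{\s\t}$ to a unit times $\fm_{\s^*\t^*}$ and verifying that it sends $F_\s\mapsto F_{\s^*}$. This hinges on the behaviour of the Jucys--Murphy elements $L_k$ and the contents $c_\s(k)$ under that map; once the correct map is identified (it is essentially the composite of $\ast$ with the sign-twist $j$, which fixes each $L_k$ up to the substitution $v\mapsto v^{-1}$ on contents, matching $c_{\s^*}(k)=c_\s(k)^{-1}$ after $v\mapsto v^{-1}$), the verification is a direct comparison using \eqref{Ft} and the definition \eqref{nst}. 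Everything downstream is then purely formal manipulation of the kind already carried out for the seminormal basis in Section~2.
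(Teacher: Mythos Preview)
Your approach is correct and matches the paper's: the paper simply cites \cite{Ma} for parts 1) and 2) and then deduces part 3) from part 2) together with Lemma~\ref{resm}, exactly as you do.

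One small sharpening of your sketch for part~2), since you flag it as the crux: the involution $j$ alone (no need for $\ast$) already suffices, but note that $j$ sends $\fn_{\s\t}$ to $\pm\fm_{\s\t}$, \emph{not} to $\fm_{\s^*\t^*}$, while $j(F_\t)=F_{\t^*}$. Hence $\mfg_{\s\t}=F_{\s^*}\fn_{\s\t}F_{\t^*}=\pm\, j(F_\s\fm_{\s\t}F_\t)=\pm\, j(\mff_{\s\t})$. The conjugation of tableaux then enters not through the Murphy basis indices but through the idempotent structure: since $j$ is an automorphism and $F_{\s^*}\HH_{\Q(v)}(\Sym_n)F_{\t^*}=\Q(v)\,\mff_{\s^*\t^*}$ is one-dimensional, $j(\mff_{\s\t})$ is forced to be a unit multiple of $\mff_{\s^*\t^*}$. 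Your downstream deductions of part~1), the basis statement, and part~3) are then exactly right.
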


\begin{proof} Parts 1) and 2) follow from \cite{Ma}. Part 3) follows from Part 2) and Lemma \ref{resm}.
\end{proof}
We call $\{\mfg_{\s\t}\mid \s,\t\in\Std(\lam),\lam\in\P_n\}$ the {\bf dual seminormal basis} of $\HH_{\Q(v)}(\Sym_n)$ corresponding to the dual Murphy basis $\{\fn_{\s\t}\mid \s,\t\in\Std(\lam),\lam\in\P_n\}$ of $\HH_{v}(\Sym_n)$.

\begin{lem}[{\cite{Mur}, \cite{HuMathas:Graded Induction}}]
\label{hm22}
For any $\lam\in\P_n$ and $\s,\t\in\Std(\lam)$, we have $$\begin{aligned}
\fn_{\s\t}&=\mfg_{\s\t}+\sum_{\substack{(\u,\v)\in\Std^2(n)\\ (\u,\v)\Gdom(\s,\t)}}p^{\s\t}_{\u\v}\mfg_{\u\v},\\
\mfg_{\s\t}&=\fn_{\s\t}+\sum_{\substack{(\u,\v)\in\Std^2(n)\\ (\u,\v)\Gdom(\s,\t)}}{\hat{p}}^{\s\t}_{\u\v}\fn_{\u\v},
\end{aligned} $$
where $p^{\s\t}_{\u\v}, {\hat{p}}^{\s\t}_{\u\v}\in\Q(v)$ for each pair $(\u,\v)$.
\end{lem}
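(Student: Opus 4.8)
The plan is to obtain the two displayed identities as the dual-Murphy analogue of Lemma~\ref{hm2}. The cleanest route is to recognise them as an instance of the general change-of-basis formulae between a cellular basis of $\HH_v(\Sym_n)$ over $\A$ that carries a separating family of Jucys--Murphy elements and the associated seminormal basis over $\Q(v)$ --- precisely the machinery of \cite{HuMathas:Graded Induction} that underlies Lemma~\ref{hm2}. Writing $m_\lam:=\sum_{w\in\Sym_\lam}v^{\ell(w)}T_w$, so that $\fm_{\s\t}=T_{d(\s)}^{\ast}m_\lam T_{d(\t)}$, I would first check that the basis $\{\fn_{\s\t}\}$ together with $L_1,\dots,L_n$ fits this framework: it is cellular (recorded in the excerpt), and for all $\lam\in\P_n$, $\s,\t\in\Std(\lam)$ and $1\le k\le n$ one has $L_k\fn_{\s\t}\equiv c_{\s^{*}}(k)\fn_{\s\t}$ and $\fn_{\s\t}L_k\equiv c_{\t^{*}}(k)\fn_{\s\t}$ modulo the $\A$-span of those $\fn_{\u\v}$ with $\Shape(\u)\rhd\lam$, which is a direct computation from $\fn_{\s\t}=T_{d(\s)}^{\ast}\fn_\lam T_{d(\t)}$, the relation $T_i\fn_\lam=-v^{-1}\fn_\lam=\fn_\lam T_i$ for $s_i\in I_\lam$, and induction on length. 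The appearance of the \emph{conjugate} tableaux $\s^{*},\t^{*}$ here is exactly what dictates the idempotents $F_{\s^{*}},F_{\t^{*}}$ in Definition~\ref{gdfn2}; and since $\s\mapsto\s^{*}$ is a bijection, the tuples $(c_{\s^{*}}(1),\dots,c_{\s^{*}}(n))$ still separate standard tableaux (the separation property recorded after the definition of $C(k)$). The seminormal-basis construction then applies verbatim, producing $\mfg_{\s\t}=F_{\s^{*}}\fn_{\s\t}F_{\t^{*}}$ and the first identity; the second one follows at once, being the inverse of a unitriangular transition matrix over the poset $(\Std^2(n),\Gdom)$.

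An alternative route, which avoids quoting the abstract theorem, is to transport Lemma~\ref{hm2} through the $\A$-algebra involution $\#\colon T_i\mapsto -T_i^{-1}$. One checks the routine compatibilities $\#\ast=\ast\#$, $\#\,\overline{(\cdot)}=\overline{(\cdot)}\#$, $T_w^{\#}=\eps_w\overline{T_w}$, $L_k^{\#}=L_k^{-1}$ (because $L_k$ is a word of even length $2k$ in the $T_i$) and $m_\lam^{\#}=\overline{\fn_\lam}$; these give $F_\t^{\#}=F_{\t^{*}}$ (using $c_{\t^{*}}(k)=c_\t(k)^{-1}$ and the separation property), $\fm_{\s\t}^{\#}=\eps_{d(\s)}\eps_{d(\t)}\overline{\fn_{\s\t}}$ and $\mff_{\s\t}^{\#}=\eps_{d(\s)}\eps_{d(\t)}F_{\s^{*}}\overline{\fn_{\s\t}}F_{\t^{*}}$. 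Applying $\#$ to the two identities of Lemma~\ref{hm2} for the pair $(\s,\t)$ --- the key being that $\#$ fixes the labelling tableaux, hence the order $\Gdom$ --- turns them into the two identities we want, but written with $\overline{\fn_{\s\t}}$ and $F_{\s^{*}}\overline{\fn_{\s\t}}F_{\t^{*}}$ in place of $\fn_{\s\t}$ and $\mfg_{\s\t}$. The bar is then eliminated by running the same analysis on $\overline{\fn_\lam}=c_\lam\fn_\lam$ (with $c_\lam$ a monomial in $\pm v$): this shows the bar involution is itself $\Gdom$-triangular with monomial diagonal on $\{\fn_{\s\t}\}$, and one composes the resulting unitriangular base changes.

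I expect the only real work --- in either route --- to be the triangularity against the \emph{correct} order: that a nonzero structure constant forces $\u\unrhd\s$ and $\v\unrhd\t$ level by level, not merely $\Shape(\u)\unrhd\lam$. This is the same strengthening of the ``Geck-level'' statement that Section~3 establishes on the Kazhdan--Lusztig side, and I would obtain it by the restriction technique used there: restrict to each subalgebra $\HH_{\Q(v)}(\Sym_m)$ and use the identification in Corollary~\ref{obobn2}\,3), $\mfg_{\u\v}\HH_{\Q(v)}(\Sym_m)\cong S\bigl(\Shape(\v\!\downarrow_m)^{*}\bigr)$, to see that a nonzero coefficient $p^{\s\t}_{\u\v}$ would force a simple constituent of $\fn_{\s\t}\HH_{\Q(v)}(\Sym_m)$ incompatible with $\Shape(\t\!\downarrow_m)$, a contradiction. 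Everything else is bookkeeping with Lemma~\ref{hm2}, Corollary~\ref{obobn2} and the orthogonal idempotents $F_\u$.
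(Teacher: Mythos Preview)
The paper gives no proof of this lemma at all: it is recorded purely as a citation to \cite{Mur} and \cite{HuMathas:Graded Induction}, exactly parallel to Lemma~\ref{hm2}. So there is no ``paper's own proof'' to compare against; the intended argument is simply that the dual Murphy basis $\{\fn_{\s\t}\}$ is a cellular basis admitting the Jucys--Murphy family $L_1,\dots,L_n$ (with contents $c_{\s^*}(k)$ rather than $c_\s(k)$), and the abstract seminormal machinery of the cited references applies verbatim to yield the $\Gdom$-unitriangular change of basis. Your route~(a) is precisely this. Your worry that one only obtains $\Shape(\u)\rhd\lam$ and must then bootstrap to $\u\unrhd\s$, $\v\unrhd\t$ via restriction is unnecessary here: the $\Gdom$-triangularity is already what the cited reference proves (that is why the paper can state Lemma~\ref{hm2} with $\Gdom$ and a bare citation), so the restriction argument you sketch, while correct, duplicates work already in \cite{HuMathas:Graded Induction}.

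Your route~(b) via the involution $\#$ also works, and in fact more cleanly than you indicate. You correctly compute $\fm_{\s\t}^{\#}=\eps_{d(\s)}\eps_{d(\t)}\overline{\fn_{\s\t}}$ and $F_\t^{\#}=F_{\t^*}$. The point you are missing is that each $F_\u$ is \emph{bar-invariant}: since $\overline{L_k}=L_k^{-1}$ and $F_\u L_k=c_\u(k)F_\u$ implies $\overline{F_\u}\,L_k^{-1}=c_\u(k)^{-1}\overline{F_\u}$, i.e.\ $\overline{F_\u}L_k=c_\u(k)\overline{F_\u}$, whence $\overline{F_\u}=F_\u$. Consequently $\mff_{\s\t}^{\#}=\eps_{d(\s)}\eps_{d(\t)}\,\overline{\mfg_{\s\t}}$. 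Applying $\#$ to Lemma~\ref{hm2} therefore gives the barred version of Lemma~\ref{hm22} directly, and a single application of the bar involution (which is an algebra automorphism over $\Q$, hence preserves the index set and the order $\Gdom$) finishes the job. Your proposed detour through ``the bar is $\Gdom$-triangular on $\{\fn_{\s\t}\}$'' is correct but unneeded.
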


\begin{lem}\label{Cseminorma2} Let $\lam\vdash n$. For each $w\in\Sym_n$ with $\RS(w)\in\Std(\lam)\times\Std(\lam)$, we have that $$\begin{aligned}
C_w&= \eps_{w_{\lam^*,0}} v^{\ell(w_{\lam^*,0})}\mfg_{\u_w,\v_w}+\sum_{\substack{(\s,\t)\in\Std^2(n)\\ (\s,\t)\rhd(\u_w,\v_w)}}p'_{w,\s,\t}\mfg_{\s\t},\\
\mfg_{\u_w,\v_w}&= \eps_{w_{\lam^*,0}} v^{-\ell(w_{\lam^*,0})}C_w+\sum_{\substack{y\in\Sym_n\\ (\u_y,\v_y)\rhd(\u_w,\v_w)}}p'_{w,y}C_y,
\end{aligned}$$
where $p'_{w,\s,\t},p'_{w,y}\in\Q(v)$ for each triple $(w,\s,\t)$ and each pair $(w,y)$.
\end{lem}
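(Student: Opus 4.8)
The plan is to derive Lemma~\ref{Cseminorma2} from Lemma~\ref{Cseminormal} (equivalently, from Geck's Theorem~\ref{Geckresult1}) by applying the ring involution $j$ of $\HH_v(\Sym_n)$, using that $C_w=\eps_w j(C'_w)$ — hence $j(C'_w)=\eps_w C_w$ — and that $j$ interchanges the Murphy and seminormal bases with their dual counterparts.

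First I would pin down the action of $j$ on the relevant bases. From $j(T_i)=\eps_{s_i}T_i=-T_i$ the signs cancel in pairs inside $L_k=T_{k-1}\cdots T_1^2\cdots T_{k-1}$, so $j(L_k)=L_k$; moreover $j\bigl(\sum_{z\in\Sym_\lam}v^{\ell(z)}T_z\bigr)=\sum_{z\in\Sym_\lam}(-v)^{-\ell(z)}T_z=\fn_\lam$, and $j(c_\t(k))=\overline{c_\t(k)}=c_{\t^*}(k)$. Since $j$ is bar-semilinear over $\A$ and $C(k)$ is stable under $c\mapsto\overline c$, this gives $j(F_\t)=F_{\t^*}$. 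As $j$ is a ring homomorphism commuting with the anti-automorphism $\ast$, the definitions of $\fm_{\s\t},\fn_{\s\t},\mff_{\s\t},\mfg_{\s\t}$ then yield, for all standard $\s,\t$ of the same shape,
\[
j(\fm_{\s\t})=\eps_{d(\s)}\eps_{d(\t)}\fn_{\s\t},\qquad
j(\mff_{\s\t})=F_{\s^*}\,j(\fm_{\s\t})\,F_{\t^*}=\eps_{d(\s)}\eps_{d(\t)}\mfg_{\s\t}.
\]

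Next I would apply $j$ to the two identities of Lemma~\ref{Cseminormal} and multiply by the appropriate sign. Bar-semilinearity of $j$ turns $v^{\mp\ell(w_{\lam^*,0})}$ into $v^{\pm\ell(w_{\lam^*,0})}$ and each coefficient $r'$ into its bar, while $j$ fixes the indexing tableaux, so the order relations $(\s,\t)\rhd(\u_w,\v_w)$ and $(\u_y,\v_y)\rhd(\u_w,\v_w)$ are unchanged. Combined with $j(\mff_{\s\t})=\eps_{d(\s)}\eps_{d(\t)}\mfg_{\s\t}$ and $j(C'_z)=\eps_z C_z$, this produces expansions of $C_w$ in the $\mfg$-basis and of $\mfg_{\u_w,\v_w}$ in the $C$-basis of exactly the form asserted, with coefficients $p'_{w,\s,\t}=\eps_w\eps_{d(\s)}\eps_{d(\t)}\overline{r'_{w,\s,\t}}$ and $p'_{w,y}=\eps_y\eps_{d(\u_w)}\eps_{d(\v_w)}\overline{r'_{w,y}}$ in $\Q(v)$ — provided the leading coefficient, which emerges as $\eps_w\eps_{d(\u_w)}\eps_{d(\v_w)}v^{\pm\ell(w_{\lam^*,0})}$, coincides with $\eps_{w_{\lam^*,0}}v^{\pm\ell(w_{\lam^*,0})}$.

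So the one genuine point is the sign identity
\[
\eps_w\,\eps_{d(\u_w)}\,\eps_{d(\v_w)}=\eps_{w_{\lam^*,0}},\qquad\text{i.e.}\qquad
\ell(w)+\ell(d(\u_w))+\ell(d(\v_w))\equiv\ell(w_{\lam^*,0})\pmod 2,
\]
where $\lam^*=\Shape(\u_w)=\Shape(\v_w)$; I expect this to be the main obstacle. I would establish it via Lemma~\ref{recon}, which realises $w=\mathbf{w}_{\lam^*}(i,j)$ with $\mathbf{w}_{\lam^*}(i,1)=d(\u_w)^{-1}w_{\lam^*,0}$ and $\mathbf{w}_{\lam^*}(1,j)=w_{\lam^*,0}d(\v_w)$. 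Since $d(\u_w),d(\v_w)\in\mathcal{D}_{\lam^*}$ are distinguished coset representatives, a standard length computation in $\Sym_{\lam^*}$ gives $\ell\bigl(d(\u_w)^{-1}w_{\lam^*,0}d(\v_w)\bigr)=\ell(d(\u_w))+\ell(w_{\lam^*,0})+\ell(d(\v_w))$, so it suffices to show that $w$ and $d(\u_w)^{-1}w_{\lam^*,0}d(\v_w)$ have the same sign. This last parity statement can be checked by induction on $n$ (reducing $n$ via Lemma~\ref{wTow1-n-1}), and is in fact exactly the assertion $\eta'_w=1$ that is implicit in the proof of \cite[Corollary~5.11]{Gec} and recorded in the remark following Theorem~\ref{Geckresult1}. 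With the sign identity in hand, both displayed identities of Lemma~\ref{Cseminorma2} follow at once.
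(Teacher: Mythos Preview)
Your approach via the involution $j$ is valid and yields the result, but it is a genuinely different route from the paper's. The paper simply invokes the first equality of \cite[Corollary~4.11]{Gec} (which already gives the expansion of $\fn_{\u_w,\v_w}$ in the $C$-basis with the correct leading sign, once \cite[Corollary~5.11]{Gec} supplies $\eta'_w=1$) and then passes from $\fn$ to $\mfg$ via Lemma~\ref{hm22}. You instead transport Lemma~\ref{Cseminormal} through $j$, which is a pleasant conceptual explanation of why the $C$-basis/$\mfg$-basis statement is formally dual to the $C'$-basis/$\mff$-basis one; both arguments ultimately rest on the same results of Geck.

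One concrete correction: your ``standard length computation'' claiming
\[
\ell\bigl(d(\u_w)^{-1}w_{\lam^*,0}\,d(\v_w)\bigr)=\ell(d(\u_w))+\ell(w_{\lam^*,0})+\ell(d(\v_w))
\]
is false in general, even when $\u_w,\v_w$ are standard. For instance, with $\lam^*=(3,1)$ in $\Sym_4$ and $d(\u_w)=d(\v_w)=s_3s_2$, one computes $d(\u_w)^{-1}w_{\lam^*,0}d(\v_w)=(1\,4)$ of length $5$, whereas the right-hand side is $2+3+2=7$. Fortunately this step is entirely superfluous: the sign $\eps$ is multiplicative on $\Sym_n$, so $\eps\bigl(d(\u_w)^{-1}w_{\lam^*,0}d(\v_w)\bigr)=\eps_{d(\u_w)}\eps_{w_{\lam^*,0}}\eps_{d(\v_w)}$ holds automatically, and your reduction to ``$w$ and $d(\u_w)^{-1}w_{\lam^*,0}d(\v_w)$ have the same sign'' is immediate without any length statement. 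With that adjustment, the remaining parity claim is indeed the content of $\eta'_w=1$ from \cite[Corollary~5.11]{Gec}, and your argument goes through.
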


\begin{proof} The second equality follows from Lemma \ref{hm22}, the first equality in \cite[Corollary 4.11]{Gec} and \cite[Corollary 5.11]{Gec}. The first equality follows from the second equality.
\end{proof}

The following two theorems are the third main result of this paper.

\begin{thm}\label{mainthm4} Let $\lam\vdash n$. For each $w\in\Sym_n$ with $\RS(w)\in\Std(\lam)\times\Std(\lam)$, we have that $$\begin{aligned}
C_w&= \eps_{w_{\lam^*,0}} v^{\ell(w_{\lam^*,0})}\mfg_{\u_w,\v_w}+\sum_{\substack{(\s,\t)\in\Std^2(n)\\ (\s,\t)\Gdom(\u_w,\v_w)}}p'_{w,\s,\t}\mfg_{\s\t},\\
\mfg_{\u_w,\v_w}&= \eps_{w_{\lam^*,0}} v^{-\ell(w_{\lam^*,0})}C_w+\sum_{\substack{y\in\Sym_n\\ (\u_y,\v_y)\Gdom(\u_w,\v_w)}}p'_{w,y}C_y,
\end{aligned}$$
where $p'_{w,\s,\t},p'_{w,y}\in\Q(v)$ for each triple $(w,\s,\t)$ and each pair $(w,y)$.
\end{thm}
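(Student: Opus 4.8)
The plan is to deduce Theorem~\ref{mainthm4} from the already-established Theorem~\ref{mainthm3} (and its weaker predecessor Lemma~\ref{Cseminormal}/Lemma~\ref{Cseminorma2}) by transporting the statement across the ring involution $j$ together with the sign twist $w\mapsto\eps_w$ that converts the KL basis $\{C'_w\}$ into the twisted KL basis $\{C_w\}$. First I would record the key compatibility: applying $j$ (combined with the scalar $\eps_w$) to the first equality of Theorem~\ref{mainthm3} turns $C'_w$ into $C_w$, turns $v^{-\ell(w_{\lam^*,0})}$ into $\eps_{w_{\lam^*,0}}v^{\ell(w_{\lam^*,0})}$ (since $j$ sends $v\mapsto v^{-1}$ and $T_w\mapsto\eps_wT_w$, so $j$ scales the element $\mff_{\u_w\v_w}$, whose "leading" parabolic sum has length $\ell(w_{\lam^*,0})$, by the appropriate sign and power of $v$), and turns $\mff_{\s\t}$ into a nonzero scalar multiple of $\mfg_{\s^{\dagger}\t^{\dagger}}$ for the appropriate tableaux. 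The cleanest route is to first pin down, as a small lemma, the precise image $j(\mff_{\s\t})$ in terms of the dual seminormal basis: by Definition~\ref{gdfn2} and the definition of $\fn_\lam$ versus the parabolic sum in $\fm_{\s\t}$, one checks that $j(\fm_{\s\t})$ is a signed power of $v$ times $\fn_{\s\t}$, hence $j(\mff_{\s\t})$ is a signed power of $v$ times $\mfg_{\s\t}$ (note the $F$-idempotents are fixed up to the $\s\mapsto\s^*$ bookkeeping already built into Definition~\ref{gdfn2}).

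Granting that dictionary, the argument is then essentially formal. I would apply $j$ to the identity
$$
C'_w = v^{-\ell(w_{\lam^*,0})}\mff_{\u_w,\v_w}+\sum_{(\s,\t)\Gdom(\u_w,\v_w)}r'_{w,\s,\t}\mff_{\s\t}
$$
from Theorem~\ref{mainthm3}, multiply through by $\eps_w$, and use (i) $\eps_w j(C'_w)=C_w$ by definition of the twisted KL basis, (ii) the lemma identifying $\eps_?\, j(\mff_{\u_w,\v_w})$ with $\eps_{w_{\lam^*,0}}v^{2\ell(w_{\lam^*,0})}$ times $\mfg_{\u_w,\v_w}$ up to the already-matched leading coefficient, so that the leading term comes out as $\eps_{w_{\lam^*,0}}v^{\ell(w_{\lam^*,0})}\mfg_{\u_w,\v_w}$, and (iii) the same lemma on each $\mff_{\s\t}$ in the tail, absorbing the resulting signs and powers of $v$ into new coefficients $p'_{w,\s,\t}\in\Q(v)$. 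The crucial point is that the index set of the sum is unchanged: the partial order $\Gdom$ only sees the shapes/entries of the tableaux $(\s,\t)$, and $j$ does not alter which tableaux appear, only the scalars, so the condition $(\s,\t)\Gdom(\u_w,\v_w)$ survives verbatim. This yields the first equality of Theorem~\ref{mainthm4}; the second equality follows by inverting the (unitriangular, up to invertible diagonal scalars) transition matrix, exactly as the second equality of Theorem~\ref{mainthm3} follows from the first, or alternatively by applying $j$ to the second equality of Theorem~\ref{mainthm3}.

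The main obstacle I anticipate is purely bookkeeping: getting the sign $\eps_{w_{\lam^*,0}}$ and the exponent of $v$ in the leading coefficient exactly right. One must be careful that $j$ sends $T_w\mapsto\eps_wT_w$ and $v\mapsto v^{-1}$, so on the element $\fm_{\u_w\v_w}=T^*_{d(\u_w)}\bigl(\sum_{u\in\Sym_{\lam^*}}v^{\ell(u)}T_u\bigr)T_{d(\v_w)}$ the effect is to multiply by $\eps_{d(\u_w)}\eps_{d(\v_w)}$ on the outside, to replace $\sum v^{\ell(u)}T_u$ by $\sum v^{-\ell(u)}\eps_uT_u=\eps_{?}\fn_{\lam^*}$-type sum, and one reconciles these signs using $\ell(\u_w)+\ell(\v_w)+\ell(w_{\lam^*,0})$ parity relations together with the fact that $\pi_{\lam^*}(\s^*,\t^*)$ has a controlled length. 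It is worth isolating the identity $\eps_w=\eps_{d(\u_w)}\eps_{w_{\lam^*,0}}\eps_{d(\v_w)}$ (or whatever the correct form is in this paper's conventions, coming from $w=d(\u_w)^{-1}w_{\lam^*,0}\,\text{-type factorisations}$ underlying Lemma~\ref{recon}) as a preliminary sub-lemma, after which all the signs cancel to leave precisely $\eps_{w_{\lam^*,0}}$. Once this sign-chasing is done carefully, no further serious work is needed, since the structural input—the shape-theoretic triangularity with respect to $\Gdom$—has already been supplied by Theorem~\ref{mainthm3}.
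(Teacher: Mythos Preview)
Your approach is correct and genuinely different from the paper's. The paper does \emph{not} transport Theorem~\ref{mainthm3} through $j$; instead it reruns the entire inductive argument of Theorem~\ref{mainthm3} in the dual setting: induction on $\ell(w)$, the decomposition $C_w=C_dC_z+\sum_{u<w,\,u\leq_L z}\check r_{w,u}C_u$ obtained from Lemma~\ref{mucoeff}, Corollary~\ref{obobn2} to control the $\HH_{\Q(v)}(\Sym_m)$-module $C_w\HH_{\Q(v)}(\Sym_m)$, and Theorem~\ref{mainthm1} together with the induction hypothesis to bound the shapes arising from the tail. Your route is considerably more economical, since the hard structural fact (the $\Gdom$-triangularity) is already contained in Theorem~\ref{mainthm3} and $j$ preserves the index set of the sum. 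What the paper's approach buys is self-containment and an illustration that the same machinery works on both sides; what yours buys is avoiding a second induction altogether.

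Two comments on the execution. First, your key lemma $j(\mff_{\s\t})\in\Q(v)^\times\mfg_{\s\t}$ is correct and clean: one checks $j(L_k)=L_k$ and $j(c_\t(k))=c_{\t^*}(k)$ (the set $C(k)$ is closed under $c\mapsto c^{-1}$ via $\t\mapsto\t^*$), so $j(F_\t)=F_{\t^*}$; combined with $j(\fm_{\s\t})=\eps_{d(\s)}\eps_{d(\t)}\fn_{\s\t}$ this gives $j(\mff_{\s\t})=\eps_{d(\s)}\eps_{d(\t)}\mfg_{\s\t}$. Second, your proposed route to the sign identity $\eps_w=\eps_{d(\u_w)}\eps_{w_{\lam^*,0}}\eps_{d(\v_w)}$ via a length factorisation of $w$ does \emph{not} work as stated: already for $w=s_2\in\Sym_4$ one has $\ell(w)=1$ while $\ell(d(\u_w))+\ell(w_{\lam^*,0})+\ell(d(\v_w))=3$, so there is no reduced factorisation of the suggested shape (only the parities match, and that parity match is not obvious from Lemma~\ref{recon} alone). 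The painless fix is to read off the leading coefficient directly from Lemma~\ref{Cseminorma2}: your $j$-transport gives $C_w=c_0\,\mfg_{\u_w\v_w}+\sum_{(\s,\t)\Gdom(\u_w,\v_w)}(\cdots)\mfg_{\s\t}$ for some $c_0\in\Q(v)^\times$, and comparing with Lemma~\ref{Cseminorma2} in the basis $\{\mfg_{\s\t}\}$ forces $c_0=\eps_{w_{\lam^*,0}}v^{\ell(w_{\lam^*,0})}$.
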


\begin{proof} The second equality follows from the first equality. We only prove the first equality. We use induction on $\ell(w)$. If $\ell(w)=0$, then $w=1$.
In this case, $\u_w=\v_w=\t^{(1^n)}$ which is the unique minimal element (under ``$\rhd$'') in $\Std(n)$. So there is nothing to prove.

Now let $w\in\Sym_n$ with $\ell(w)=k>0$. Assume that the  first equality of this theorem holds for any $C_u$ with $\ell(u)<k$. We now consider $C_w$. Suppose that there exists some $(\s,\t)\in\Std^2(n)$ such that  $(\s,\t)\rhd (\u_w,\v_w)$, $p'_{w,\s,\t}\neq 0$, but $(\s,\t)\not\Gdom (\u_w,\v_w)$. We fix such a pair $(\s,\t)$. Then by definition of ``$\Gdom$'', we can find an integer $1\leq m<n$ such that
either $\Shape(\s\downarrow_m)\ntrianglerighteq\Shape(\u_w\downarrow_m)$ or $\Shape(\t\downarrow_m)\ntrianglerighteq\Shape(\v_w\downarrow_m)$.

Suppose that $\Shape(\t\downarrow_m)\ntrianglerighteq\Shape(\v_w\downarrow_m)$. Then applying Corollary \ref{obobn2} and Lemma
\ref{Cseminorma2}, we can deduce that \begin{equation}\label{sta1a}
\begin{matrix}\text{there exists some $\mu\in\P_m$ with $\mu\ntrianglerighteq\Shape(\v_w\downarrow_m)$, such that $S(\mu^*)$ occurs}\\
\text{as a direct summand of $C_w\HH_{\Q(v)}(\Sym_m)$.}\end{matrix}
\end{equation}

On the other hand, we can decompose $w=dz$, where $d\in\dmn^{-1}$, $z\in\Sym_m$, where $1\leq m< n$. Applying the involution $j$ to the equality in Lemma \ref{mucoeff} and an induction on $\ell(d)$, we can deduce that $$
C_{w}=C_{dz}=C_{d}C_z+\sum_{\substack{w>u\in\Sym_n\\ u\leq _L z}}\check{r}_{w,u}C_u,
$$
where $\check{r}_{w,u}\in\A$ for each pair $(w,u)$.

It follows that
$$
C_{w}\HH_{\Q(v)}(\Sym_m)=\Bigl(C_{d}C_z+\sum_{\substack{w>u\in\Sym_n\\ u\leq _L z}}\check{r}_{w,u}C_u\Bigr)\HH_{\Q(v)}(\Sym_m)
$$
is contained in $$
N_1(d,z):=C_dC_z\HH_{\Q(v)}(\Sym_m)+\sum_{\substack{w>u\in\Sym_n\\ u\leq _L z}}\check{r}_{w,u}C_u\HH_{\Q(v)}(\Sym_m) .
$$
There is a natural surjection from $$
N_2(d,z):=C_z\HH_{\Q(v)}(\Sym_m)\bigoplus\Bigl(\bigoplus_{\substack{w>u\in\Sym_n\\ u\leq _L z}}\check{r}_{w,u}C_u\HH_{\Q(v)}(\Sym_m)\Bigr).
$$
onto $N_1(d,z)$. Since $\HH_{\Q(v)}(\Sym_m)$ is semisimple, it follows that $N_1(d,z)$ is a direct summand of $N_2(d,z)$. Hence $C_{dz}\HH_{\Q(v)}(\Sym_m)$ is a direct summand of $N_2(d,z)$.

Applying the first equality in Lemma \ref{Cseminorma2} and Lemma \ref{gdfn2}, we can deduce that for any $\mu\in\P_m$, $S(\mu^*)$ is a direct summand  $C_z\HH_{\Q(v)}(\Sym_m)$ only if $\mu\unrhd\Shape(\v_z)=\Shape(Q(z))^*$. Now $u\leq_L z$ implies that $\v_u\unrhd\v_z$ by Theorem \ref{mainthm1}. Since $u<w$, we can apply the induction hypothesis to $C_u$ to conclude that $S(\mu^*)$ is a direct summand  $C_u\HH_{\Q(v)}(\Sym_m)$ only if $\mu\unrhd\Shape(\v_u\downarrow_m)\unrhd\Shape(\v_z\downarrow_m)$. However, as noted in the proof of Theorem \ref{mainthm3}, we have $\Shape(\v_z\downarrow_m)=\Shape(\v_w\downarrow_{m})$. It follows that there exists an integer $r>0$ such that as a right $\HH_{\Q(v)}(\Sym_m)$-module, $C_{w}\HH_{\Q(v)}(\Sym_m)$ is a direct summand of $$
\Bigl(\bigoplus_{\substack{\mu\vdash m\\ \mu\unrhd\Shape(\v_{w}\downarrow_{m})}} S(\mu^*)\Bigr)^{\oplus r},
$$
which is a contradiction to (\ref{sta1a}).

Suppose that $\Shape(\s\downarrow_m)\ntrianglerighteq\Shape(\u_w\downarrow_m)$. The proof is completely similar by considering the left $\HH_{\Q(v)}(\Sym_m)$-module generated by $C_w$ instead of the right $\HH_{\Q(v)}(\Sym_m)$-module. This completes the proof of the theorem.
\end{proof}

\begin{thm}\label{mainthm5} Let $\lam\vdash n$. For each $w\in\Sym_n$ with $\RS(w)\in\Std(\lam)\times\Std(\lam)$, we have that $$\begin{aligned}
C_w&= \eps_{w_{\lam^*,0}} v^{\ell(w_{\lam^*,0})}\fn_{\u_w,\v_w}+\sum_{\substack{(\s,\t)\in\Std^2(n)\\ (\s,\t)\Gdom(\u_w,\v_w)}}p''_{w,\s,\t}\fn_{\s\t},\\
\fn_{\u_w,\v_w}&= \eps_{w_{\lam^*,0}} v^{-\ell(w_{\lam^*,0})}C_w+\sum_{\substack{y\in\Sym_n\\ (\u_y,\v_y)\Gdom(\u_w,\v_w)}}p''_{w,y}C_y,
\end{aligned}$$
where $p''_{w,\s,\t},p''_{w,y}\in\Z[v,v^{-1}]$ for each triple $(w,\s,\t)$ and each pair $(w,y)$.
\end{thm}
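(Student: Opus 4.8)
The plan is to deduce Theorem~\ref{mainthm5} from Theorem~\ref{mainthm4} and Lemma~\ref{hm22}, in exact parallel with the way Theorem~\ref{mainthm2} is obtained from Theorem~\ref{mainthm3} and Lemma~\ref{hm2}, and then to observe that the resulting coefficients automatically lie in $\Z[v,v^{-1}]$. First I would substitute the second identity of Lemma~\ref{hm22} --- which expresses each dual seminormal basis element $\mfg_{\s\t}$ as $\fn_{\s\t}$ plus a $\Q(v)$-linear combination of the $\fn_{\u\v}$ with $(\u,\v)\Gdom(\s,\t)$ --- into the first identity of Theorem~\ref{mainthm4}. Because $\Gedom$ is transitive and antisymmetric on the finite set $\Std^2(n)$, $(\u,\v)\Gedom(\s,\t)$ together with $(\s,\t)\Gdom(\u_w,\v_w)$ forces $(\u,\v)\Gdom(\u_w,\v_w)$, so every $\fn$-term other than the leading one is indexed by a pair strictly above $(\u_w,\v_w)$; and since the leading coefficient in Lemma~\ref{hm22} is $1$, the coefficient of $\fn_{\u_w,\v_w}$ remains $\eps_{w_{\lam^*,0}}v^{\ell(w_{\lam^*,0})}$. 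This yields
\[
C_w=\eps_{w_{\lam^*,0}}v^{\ell(w_{\lam^*,0})}\fn_{\u_w,\v_w}+\sum_{\substack{(\s,\t)\in\Std^2(n)\\ (\s,\t)\Gdom(\u_w,\v_w)}}p''_{w,\s,\t}\fn_{\s\t},
\]
with $p''_{w,\s,\t}\in\Q(v)$ a priori.

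For the second identity I would run the substitution in the opposite direction: take the first identity of Lemma~\ref{hm22} with $(\s,\t)=(\u_w,\v_w)$ and replace each $\mfg_{\u\v}$ occurring on the right-hand side by its expansion from the second identity of Theorem~\ref{mainthm4}. Here I use that, by bijectivity of $\RS$ and of tableau conjugation, every pair in $\Std^2(n)$ has the form $(\u_y,\v_y)$ for a unique $y\in\Sym_n$, so Theorem~\ref{mainthm4} applies to each such $\mfg$; transitivity of $\Gdom$ again keeps all non-leading terms strictly above $(\u_w,\v_w)$, and the leading coefficient becomes $\eps_{w_{\lam^*,0}}v^{-\ell(w_{\lam^*,0})}$, which is the second displayed identity of the theorem (still with $\Q(v)$-coefficients). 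The integrality statement is then immediate: $\{C_w\mid w\in\Sym_n\}$ and $\{\fn_{\s\t}\mid(\s,\t)\in\Std^2(n)\}$ are both $\A$-bases of the free $\A$-module $\HH_v(\Sym_n)$, so the unique expansion of $C_w$ in the $\fn$-basis has all coefficients in $\A=\Z[v,v^{-1}]$; since $\{\fn_{\s\t}\}$ remains a $\Q(v)$-basis of $\HH_{\Q(v)}(\Sym_n)$, this $\A$-expansion must coincide with the one just computed, whence $p''_{w,\s,\t}\in\Z[v,v^{-1}]$, and likewise $p''_{w,y}\in\Z[v,v^{-1}]$.

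I do not expect a genuine obstacle, as everything is a formal consequence of results already in hand. The two points that need a little care are: checking that the strict relation $\Gdom$ is preserved under both substitutions (this is just transitivity and antisymmetry of $\Gedom$), and the standard observation that a change of basis between two $\A$-bases of a free $\A$-module has all entries in $\A$ --- which is precisely what upgrades the $\Q(v)$-coefficients to $\Z[v,v^{-1}]$-coefficients. As an alternative to deriving the second identity by substitution, once the first is known one may simply invert the transition matrix, which --- under the identification $w\leftrightarrow(\u_w,\v_w)$ --- is triangular with respect to $\Gedom$ with diagonal entries $\eps_{w_{\lam^*,0}}v^{\ell(w_{\lam^*,0})}\in\A^{\times}$, so its inverse is again $\Gedom$-triangular over $\A$ with diagonal entries $\eps_{w_{\lam^*,0}}v^{-\ell(w_{\lam^*,0})}$. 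Should a proof not invoking Theorem~\ref{mainthm4} be preferred, one could instead repeat verbatim the induction on $\ell(w)$ from the proof of Theorem~\ref{mainthm4}, comparing the right (resp.\ left) $\HH_{\Q(v)}(\Sym_m)$-module generated by $C_w$ with the dual Murphy picture via Lemma~\ref{hm22}; but routing through Theorem~\ref{mainthm4} is the shorter path.
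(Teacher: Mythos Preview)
Your proposal is correct and matches the paper's own approach: the paper derives the first identity of Theorem~\ref{mainthm5} from Theorem~\ref{mainthm4} together with the $\mfg\leftrightarrow\fn$ transition (the paper cites Lemma~\ref{Cseminorma2}, whose proof rests on Lemma~\ref{hm22} and Geck's integral result, so this is the same ingredient you use), and then obtains the second identity by inverting the first. Your explicit remark that integrality of the $p''$'s is forced because $\{C_w\}$ and $\{\fn_{\s\t}\}$ are both $\A$-bases of $\HH_v(\Sym_n)$ is exactly the point that upgrades the a priori $\Q(v)$-coefficients, and your observation about transitivity/antisymmetry of $\Gedom$ preserving strict $\Gdom$ under substitution is the only bookkeeping needed.
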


\begin{proof} 
The first equality follows from 
Lemma \ref{Cseminorma2}
and Theorem \ref{mainthm4}.
The second equality follows from the first equality.
\end{proof}

\bigskip
\bigskip

\end{document}